\newcommand{\I}{\mathbb{I}}
\newcommand{\V}{\mathcal {V}^\circ}
\newcommand{\mV}{\mathcal {V}}
\newcommand{\ES}{\mathscr{S}}
\newcommand{\rV}{\overline{\mathcal {V}^\circ}}
\newcommand{\dr}{\mathrm{dR}}
\newcommand{\sdr}{s_{\dr}}
\newcommand{\rsdr}{\overline{s_{\dr}}}
\newcommand{\intG}{G_{\mathbb{Z}_p}}
\newcommand{\rintG}{G_{\kappa}}
\newcommand{\inttG}{G_{\mathbb{Z}_{(p)}}}
\newcommand{\intV}{V_{\mathbb{Z}_p}}
\newcommand{\rintV}{V_\kappa}
\newcommand{\Isom}{\mathbf{Isom}}
\newcommand{\Hom}{\mathbf{Hom}}
\newcommand{\Sh}{\mathrm{Sh}}
\newcommand{\lin}{\varphi^{\mathrm{lin}}}
\DeclareMathOperator{\Spec}{\mathrm{Spec}}
\DeclareMathOperator{\Fil}{\mathrm{Fil}}
\DeclareMathOperator{\gr}{\mathrm{gr}}
\newtheorem{theorem}[subsubsection]{Theorem}
\newtheorem{lemma}[subsubsection]{Lemma}
\newtheorem{proposition}[subsubsection]{Proposition}
\newtheorem{corollary}[subsubsection]{Corollary}
\theoremstyle{definition}
\newtheorem{definition}[subsubsection]{Definition}
\theoremstyle{definition}
\newtheorem{construction}[subsubsection]{Construction}
\theoremstyle{definition}
\newtheorem{notations}[subsubsection]{Notations}
\theoremstyle{remark}
\newtheorem{remark}[subsubsection]{Remark}
\theoremstyle{remark}
\newtheorem{example}[subsubsection]{Example}
\newenvironment{subeqn}{\refstepcounter{subsubsection}
$$}{\leqno{\rm(\thesubsubsection)}$$\global\@ignoretrue}
\author{Chao Zhang}
\address{
Yau Mathematical Sciences Center, Tsinghua University, Beijing, 100084, China.} \email{zhangchao1217@gmail.com}
\subjclass[2010]{Primary 14G35, Secondary 11G18 }
\begin{document}

\title[E-O for SHV of Hodge type]{Ekedahl-Oort strata for good reductions of Shimura varieties of Hodge type}
\setcounter{section}{-1} \setcounter{tocdepth}{2}

\begin{abstract}
For a Shimura variety of Hodge type with hyperspecial level
structure at a prime~$p$, Vasiu and Kisin constructed a smooth
integral model (namely the integral canonical model) uniquely
determined by a certain extension property. We define and study
the Ekedahl-Oort stratifications on the special fibers of those
integral canonical models when $p>2$. This generalizes
Ekedahl-Oort stratifications defined and studied by Oort on moduli
spaces of principally polarized abelian varieties and those
defined and studied by Moonen, Wedhorn and Viehmann on good
reductions of Shimura varieties of PEL type. We show that the
Ekedahl-Oort strata are parameterized by certain elements $w$ in
the Weyl group of the reductive group in the Shimura datum. We
prove that the stratum corresponding to $w$ is smooth of dimension
$l(w)$ (i.e. the length of $w$) if it is non-empty. We also
determine the closure of each stratum.
\end{abstract}
\maketitle \tableofcontents
\newpage
\section[Introduction]{Introduction}

Ekedahl-Oort strata were first defined and studied by Ekedahl and
Oort for Siegel modular varieties in late 1990's in
\cite{EO-Oort}. Let $g,\ n$ be integers such that $g>0$ and $n>2$, and
$\mathscr{A}_{g,n}$ be the moduli scheme of principally polarized
abelian schemes over $\mathbb{F}_p$-schemes
with a symplectic level $n$ structure. Then $\mathscr{A}_{g,n}$ is smooth
over $\mathbb{F}_p$. Let $\mathcal {A}$ be the universal abelian
scheme over $\mathscr{A}_{g,n}$. For a field $k$ of characteristic
$p>0$, a $k$-point $s$ of $\mathscr{A}_{g,n}$ gives a principally
polarized abelian variety $(\mathcal {A}_s, \psi)$ over $k$. The
polarization $\psi: \mathcal {A}_s\rightarrow \mathcal {A}_s^\vee$
induces an isomorphism $\mathcal {A}_s[p]\simeq \mathcal
{A}^\vee_s[p]$ which will still be denoted by $\psi$.

Let $C$ be the set of isomorphism classes of self-dual BT-1s of height $2g$ over
$\overline{\mathbb{F}_p}$. For $c\in C$ be a class, we fix a self
dual BT-1 $(H_c,\psi_c)$ in this class. Let $\mathscr{A}_{g,n}^c$ be the set of
points $s$ in $\mathscr{A}_{g,n}\otimes \overline{\mathbb{F}_p}$
such that there exists an algebraically closed field
$\overline{k}$ and embeddings of $k(s)$ and
$\overline{\mathbb{F}_p}$, such that the pairs $(\mathcal
{A}_s[p],\psi)\otimes \overline{k}$ and $(H_c,\psi_c)\otimes
\overline{k}$ are isomorphic. The subset $\mathscr{A}_{g,n}^c$ is
called an Ekedahl-Oort stratum.

Oort proves in \cite{EO-Oort} that $C$ is of cardinality $2^g$,
and each $\mathscr{A}_{g,n}^c$ is non-empty and locally closed in
$\mathscr{A}_{g,n}\otimes \overline{\mathbb{F}_p}$. Moreover, he
proves that each stratum is quasi-affine, and gives a dimension
formula. Ekedahl and van der Geer then computed the cycle classes
of Ekedahl-Oort strata in \cite{EkeGeer}.

By studying Ekedahl-Oort stratification, Oort
re-proved a theorem by Faltings and Chai that $\mathscr{A}_{g,n}$ is geometrically irreducible.
However, his proof doesn't make use of characteristic zero
arguments and the irreducibility of the moduli space of
characteristic zero is actually a corollary of this theorem.

The theory of Ekedahl-Oort strata has been generalized by works of
Moonen \cite{weylcos}, \cite{dimm}, Wedhorn \cite{dimw},
Moonen-Wedhorn \cite{disinv} and Viehmann-Wedhorn \cite{VW} to
Shimura varieties of PEL type, and works of Vasiu \cite{mod p
class of shi F-crys} to Shimura varieties of Hodge type. We remark
that these papers use different methods. \cite{weylcos} and
\cite{dimm} use canonical filtrations on Dieudonn\'{e} modules
attached to BT-$1$s, \cite{dimw} uses moduli of BT-$n$s,
\cite{disinv} and \cite{VW} use $F$-zips, while \cite{mod p class
of shi F-crys} uses truncated $F$-crystals.

Pink, Wedhorn and Ziegler developed systematically technical tools
to study Ekedahl-Oort strata for Shimura varieties
in \cite{zipdata} and \cite{zipaddi}. Results in \cite{zipdata} is
already used in \cite{VW} to obtain very explicit results on
Ekedahl-Oort strata for PEL Shimura varieties (e.g. see \cite{VW}
Theorem 7.1 for a combinatorial description for closure of a stratum)

In this paper, we establish and study Ekedahl-Oort strata for
Shimura varieties of Hodge type using \cite{zipaddi}. The
advantage is that we could work schematically and get explicit statements.
Now we explain the main results of this paper.

Let $(G,X)$ be a Shimura datum of Hodge type with good reduction
at $p>2$. We will always assume here and in the main body of the
paper that $p>2$ unless otherwise mentioned. Let $\overline{G}$ be
the reduction of $G$. Let $\Sh_K(G,X)$ be the Shimura variety with
$K$ small enough and hyperspecial at $p$. Let $\ES$ be the
integral canonical model constructed by Vasiu and Kisin, and $\ES_0$ be its special fiber. The
main results of this paper are as follows.

1) Fixing a symplectic embedding, we constructed a
$\overline{G}$-zip of type $\mu$ over $\ES_0$. See Definition
\ref{G-ziptypechi} for the definition, and Theorem \ref{G-zipES_0}
for this result. This $\overline{G}$-zip induces a morphism
$\zeta: \ES_0\rightarrow
\overline{G}\texttt{-}\mathtt{Zip}_\kappa^{\mu}$, where
$\overline{G}\texttt{-}\mathtt{Zip}_\kappa^{\mu}$ is the stack of
$\overline{G}$-zips of type $\mu$ (see \cite{zipaddi} or our $\S$
1.2).

2) (Theorem \ref{zetasmooth}) The morphism $\zeta$ is smooth.

3) Inverse images of $\overline{\mathbb{F}_p}$-points of
$\overline{G}\texttt{-}\mathtt{Zip}_\kappa^{\mu}$ are Ekedahl-Oort
strata. So they are locally closed in $\ES_0\otimes
\overline{\mathbb{F}_p}$. Moreover, all the possible strata are
given by a certain subset of the Weyl group of $\overline{G}$.

4) (Proposition \ref{dimandclos}) There is a dimension formula for
each stratum assuming that it is non-empty. There is also a
description of Zariski closure of a stratum. There is a unique
stratum which is open dense in $\ES_0\otimes \mathbb{F}_p$. This
stratum is called the ordinary stratum. There is at most one zero
dimensional stratum in $\ES_0\otimes \mathbb{F}_p$ which is called
the superspecial stratum.

We remark that our results are compatible with main results in
Vasiu's \cite{mod p class of shi F-crys}. For example, his Basic
Theorem D d) in 12.2 asserts that the number of strata is at most
$[W_G:W_P]$, which is the same as our Proposition 3.1.5. We also
remark that Vasiu's method works when $p=2$, but our method, based
on Kisin's \cite{CIMK}, has restrictions when $p=2$. In fact,
Kisin assumes \cite{CIMK} 2.3.4 in his construction of integral
models and integral automorphic sections, and as a result, we have
to put that condition to follow his constructions.

There are recent preprints closely related to this paper. D.
Wortmann proves in \cite{muordinary} that the $\mu$-ordinary locus
coincides with the ordinary Ekedahl-Oort stratum, and hence open
dense. This is a generalization of the fact that the ordinary
Newton stratum coincides with the ordinary Ekedahl-Oort stratum on
Siegel modular varieties. The author proves in \cite{remarkZ}
that Ekedahl-Oort stratifications are independent of choices of
symplectic embeddings. There are also works of Koskivirta-Wedhorn \cite{Kos-Wed} and Goldring-Koskivirta \cite{Gold-Kos} on Hasse invariants on Shimura varieties of Hodge type.

\section*{Acknowledgements}

This paper contains main results of my Ph.D thesis ``$G$-zips and
Ekedahl-Oort strata for Hodge type Shimura varieties'' supervised
by Fabrizio Andreatta and Bas Edixhoven. I would like to thank both of them for introducing me
to Shimura varieties and their reductions, their helps to
understand Kisin's work on integral canonical models and their
suggestions about my research and writing.

I would also like to thank Ben Moonen, Torsten Wedhorn, Daniel Wortmann as well as the referees for corrections and helpful remarks.

\

\section[$F$-zips and $G$-zips]{$F$-zips and $G$-zips}

\subsection[$F$-zips]{$F$-zips}

In this section, we will follow \cite{disinv} and \cite{zipaddi}
to introduce $F$-zips. Let $S$ be a scheme, and $M$
be a locally free $O_S$-module of finite rank. By a descending
(resp. ascending) filtration $C^\bullet$ (resp. $D_\bullet$) on
$M$, we always mean a separating and exhaustive filtration such that
$C^{i+1}(M)$ is a locally direct summand of $C^i(M)$ (resp.
$D_i(M)$ is a locally direct summand of $D_{i+1}(M)$).

Let $\text{\texttt{LF}}(S)$ be the category of locally free
$O_S$-modules of finite rank, $\text{\texttt{FilLF}}^\bullet(S)$
be the category of locally free $O_S$-modules of finite rank with
descending filtration. For two objects $(M,C^{\bullet}(M))$ and
$(N,C^{\bullet}(N))$ in $\text{\texttt{FilLF}}^\bullet(S)$, a
morphism $f:(M,C^{\bullet}(M))\rightarrow (N,C^{\bullet}(N))$ is a
homomorphism of $O_S$-modules such that $f(C^i(M))\subseteq
C^i(N)$. We also denote by $\text{\texttt{FilLF}}_\bullet(S)$ the
category of locally free $O_S$-modules of finite rank with
ascending filtration. For two objects $(M,C^\bullet)$ and
$(M',C'^\bullet)$ in $\text{\texttt{FilLF}}^\bullet(S)$, their
tensor product is defined to be $(M\otimes M', T^\bullet)$ with
$T^i=\sum_{j}C^j\otimes C'^{i-j}$. Similarly for
$\text{\texttt{FilLF}}_\bullet(S)$. For an object $(M,C^\bullet)$
in $\text{\texttt{FilLF}}^\bullet(S)$, one defines its dual to be
$$(M,C^\bullet)^\vee=({}^{\vee}M:=M^\vee,{}^{\vee}C^i:=(M/C^{1-i})^\vee);$$ and for an
object $(M,D_\bullet)$ in $\text{\texttt{FilLF}}_\bullet(S)$, one
defines its dual to be
$$(M,D_\bullet)^\vee=({}^{\vee}M:=M^\vee,{}^{\vee}D_i:=(M/D_{-1-i})^\vee).$$
It is clear from the convention that $(M,C^\bullet)^\vee=({}^{\vee}M,{}^{\vee}C^\bullet)=(M^\vee,{}^{\vee}C^\bullet)$, and similar with $D_\bullet$.

If $S$ is over $\mathbb{F}_p$, we will denote by $\sigma:
S\rightarrow S$ the morphism which is the identity on the
topological space and $p$-th power on the sheaf of functions. For
an $S$-scheme $T$, we will write $T^{(p)}$ for the pull back of
$T$ via $\sigma$. For a quasi-coherent $O_S$-module $M$, $M^{(p)}$
means the pull back of $M$ via $\sigma$. For a $\sigma$-linear map
$\varphi:M\rightarrow M$, we will denote by
$\lin:M^{(p)}\rightarrow M$ its linearization.
\begin{definition}
Let $S$ be an $\mathbb{F}_p$-scheme. By an
$F$-zip over $S$, we mean a tuple $\underline{M}=(M,\ C^{\bullet},\
D_{\bullet},\ \varphi_{\bullet})$ such that

a) $M$ is an object in $\text{\texttt{LF}}(S)$, i.e. $M$ is a
locally free sheaf of finite rank on~$S$;

b) $(M,C^{\bullet})$ is an object in
$\text{\texttt{FilLF}}^\bullet(S)$, i.e. $C^\bullet$ is a
descending filtration on~$M$;

c) $(M,D_{\bullet})$ is an object in
$\text{\texttt{FilLF}}_\bullet(S)$, i.e. $D_\bullet$ is an
ascending filtration on~$M$;

d) $\varphi_i:C^i/C^{i+1}\rightarrow D_i/D_{i-1}$ is a
$\sigma$-linear map whose linearization
$$\varphi_i^{\mathrm{lin}}:(C^i/C^{i+1})^{(p)}\rightarrow
D_i/D_{i-1}$$ is an isomorphism.

By a morphism of $F$-zips $$\underline{M}=(M,C^\bullet, D_\bullet,
\varphi_\bullet)\rightarrow \underline{M'}=(M',C'^\bullet,
D'_\bullet, \varphi'_\bullet),$$ we mean a morphism of
$O_S$-modules $f: M\rightarrow N$, such that for all $i\in \mathbb{Z}$,
$f(C^i)\subseteq C'^i$, $f(D_i)\subseteq D'_i$, and $f$ induces a
commutative diagram
\[\begin{CD}
C^i/C^{i+1}@>\varphi_i >>D_i/D_{i-1}\\
@V f VV @VV f V\\
C'^i/C'^{i+1}@>\varphi'_i >>D'_i/D'_{i-1}.
\end{CD}\]
\end{definition}
\begin{remark}
Let $S$ be a locally Noetherian $\mathbb{F}_p$-scheme, and $X$ be
an abelian scheme or a K3 surface over $S$, then
$\mathrm{H}^i_{\dr}(X/S)$ has a natural $F$-zip structure. See
\cite{deRhamw} 1.6, 1.7 and 1.11 for more details and examples.
\end{remark}
\begin{example}\label{Tateobj}(\cite{zipaddi} Example 6.6)
The Tate $F$-zips of weight $d$ is $$\mathbf{1}(d):=(O_S,
C^\bullet, D_\bullet, \varphi_\bullet),$$ where
$$C^i=\left\{
\begin{aligned}
         O_S&\text{ \ \ \ for }i\leq d;\\
         0&\text{ \ \ \ for }i> d;
                          \end{aligned} \right.\ \ \ \ \ \
D_i=\left\{
\begin{aligned}
         0&\text{ \ \ \ for }i< d;\\
         O_S&\text{ \ \ \ for }i\geq d;
                          \end{aligned} \right.$$
and $\varphi_d$ is the Frobenius.
\end{example}
One can talk about tensor products and duals in the category of $F$-zips.
\begin{definition}(\cite{zipaddi} Definition 6.4)
Let $\underline{M}$, $\underline{N}$ be two $F$-zips over $S$,
then their tensor product is the $F$-zip
$\underline{M}\otimes\underline{N}$, consisting of the tensor
product $M\otimes N$ with induced filtrations $C^\bullet$ and
$D_\bullet$ on $M\otimes N$, and induced $\sigma$-linear maps
$$\xymatrix{\gr_C^i(M\otimes N)\ar[d]^{\cong}& &\gr^D_i(M\otimes N)\\
\bigoplus_j\gr^j_C(M)\otimes
\gr^{i-j}_C(N)\ar[rr]^{\bigoplus_j\varphi_j\otimes \varphi_{i-j}}
&& \bigoplus_j\gr_j^D(M)\otimes \gr_{i-j}^D(N)\ar[u]^{\cong} }$$
whose linearization are isomorphisms.
\end{definition}
\begin{definition}(\cite{zipaddi} Definition 6.5)
The dual of an $F$-zip $\underline{M}$ is the $F$-zip
$\underline{M}^\vee$ consisting of the dual sheaf of $O_S$-modules
$M^\vee$ with the dual descending filtration of $C^\bullet$ and
dual ascending filtration of $D_\bullet$, and $\sigma$-linear maps
whose linearization are isomorphisms
$$\xymatrix@1{(\gr_C^i(M^\vee))^{(p)}=((\gr_C^{-i}M)^\vee)^{(p)}\ar[rr]^(0.6){\big((\varphi_{-i}^{\mathrm{lin}})\big)^{-1\vee}} &&
(\gr_{-i}^DM)^\vee\cong \gr^D_i(M^\vee)}.$$
\end{definition}

For the Tate $F$-zips introduced in Example \ref{Tateobj}, we have natural isomorphisms $\mathbf{1}(d)\otimes
\mathbf{1}(d')\cong\mathbf{1}(d+d')$ and
$\mathbf{1}(d)^\vee\cong\mathbf{1}(-d)$. The $d$-th Tate twist of
an $F$-zip $\underline{M}$ is defined as
$\underline{M}(d):=\underline{M}\otimes \mathbf{1}(d)$, and there
is a natural isomorphism $\underline{M}(0)\cong \underline{M}$.

\begin{definition}\label{abmisizip}
A morphism between two objects in $\text{\texttt{LF}}(S)$ is said
to be admissible if the image of the morphism is a locally direct
summand. A morphism $f:(M,C^\bullet)\rightarrow(M',C'^\bullet)$ in
$\text{\texttt{FilLF}}^\bullet(S)$ (resp.
$f:(M,D_\bullet)\rightarrow(M',D'_\bullet)$ in
$\text{\texttt{FilLF}}_\bullet(S)$) is called admissible if for
all $i$, $f(C^i)$ (resp. $f(D_i)$) is equal to $f(M)\cap C'^i$
(resp. $f(M)\cap D'_i$) and is a locally direct summand of $M'$. A
morphism between two $F$-zips $\underline{M}\rightarrow
\underline{M'}$ in $F\text{-\texttt{Zip}}(S)$ is called admissible
if it is admissible with respect to the two filtrations.
\end{definition}

With admissible morphisms, tensor products and duals defined as
above, the categories $\text{\texttt{LF}}(S)$,
$\text{\texttt{FilLF}}^\bullet(S)$,
$\text{\texttt{FilLF}}_\bullet(S)$ become $O_S$-linear exact rigid
tensor categories (see \cite{zipaddi} 4.1, 4.3, 4.4). The
admissible morphisms, tensor products, duals and the Tate object
$\mathbf{1}(0)$ makes $F\text{-\texttt{Zip}}(S)$ an
$\mathbb{F}_p$-linear exact rigid tensor category (see
\cite{zipaddi} 6). The natural forgetful functors
$F\text{-\texttt{Zip}}(S)\rightarrow \text{\texttt{LF}}(S)$,
$F\text{-\texttt{Zip}}(S)\rightarrow
\text{\texttt{FilLF}}^\bullet(S)$,
$F\text{-\texttt{Zip}}(S)\rightarrow
\text{\texttt{FilLF}}_\bullet(S)$ are exact functors.
\begin{remark}
For a morphism in $\text{\texttt{LF}}(S)$,
$\text{\texttt{FilLF}}^\bullet(S)$,
$\text{\texttt{FilLF}}_\bullet(S)$ or $F\text{-\texttt{Zip}}(S)$,
the property of being admissible is local for the fpqc topology (see \cite{zipaddi} Lemma
4.2, Lemma 6.8).
\end{remark}

\subsection[$G$-zips]{$G$-zips}

We will introduce $G$-zips following \cite{zipaddi} Chapter 3.
Note that the authors of \cite{zipaddi} work with reductive groups
over a general finite field $\mathbb{F}_q$ containing
$\mathbb{F}_p$, and $q$-Frobenius. But we don't need the most
general version of $G$-zips, as our reductive groups are connected
and defined over $\mathbb{F}_p$.

Let $G$ be a connected reductive group over $\mathbb{F}_p$, $k$ be
a finite extension of~$\mathbb{F}_p$,
 and $\chi:\mathbb{G}_{m,k}\rightarrow G_k$ be a cocharacter over $k$.
Let $P_{+}$ (resp. $P_{-}$) be the parabolic subgroup of $G_k$
such that its Lie algebra is the sum of spaces with non-negative
weights (resp. non-positive weights) in $\text{Lie}(G_k)$ under
$\text{Ad}\circ\chi$. Let $U_{+}$ (resp. $U_{-}$) be the unipotent
radical of $P_{+}$ (resp. $P_{-}$), and $L$ be the common Levi
subgroup of $P_{+}$ and $P_{-}$. Note that $L$ is also the
centralizer of~$\chi$.

\begin{definition}\label{G-ziptypechi}
Let $S$ be a scheme over $k$. A $G$-zip of type
$\chi$ over $S$ is a tuple $\underline{I}=(I, I_+, I_-, \iota)$
consisting of a right $G_k$-torsor $I$ over~$S$, a right
$P_+$-torsor $I_+\subseteq I$ (i.e. the inclusion $I_+\subseteq I$ is such that it is compatible for the $P_+$-action on $I_+$ and the $G_\kappa$-action on $I$), a right $P_-^{(p)}$-torsor
$I_-\subseteq I$ (similarly as for $I_+\subseteq I$), and an isomorphism of $L^{(p)}$-torsors
$\iota:I^{(p)}_+/U^{(p)}_+\rightarrow I_-/U^{(p)}_-$.

A morphism $(I, I_+, I_-, \iota)\rightarrow (I', I'_+, I'_-,
\iota')$ of $G$-zips of type $\chi$ over $S$ consists of
equivariant morphisms $I\rightarrow I'$ and $I_{\pm}\rightarrow
I'_{\pm}$ that are compatible with inclusions and the isomorphisms
$\iota$ and~$\iota'$.
\end{definition}
Here by a torsor over $S$ of an fpqc group scheme $G/S$, we mean an fpqc scheme $X/S$ with a $G$-action $\rho:X\times_S G\rightarrow X$ such that the morphism $X\times G\rightarrow X\times_S X$, $(x,g)\rightarrow (x,x\cdot g)$ is an isomorphism.

The category of $G$-zips of type $\chi$ over $S$ will be denoted
by $G\texttt{-Zip}_k^{\chi}(S)$. With the evident notation of pull back, the $G\texttt{-Zip}_k^{\chi}(S)$ form a fibered category
over the category of schemes over $k$, denoted by $G\texttt{-Zip}_k^{\chi}$. Noting that morphisms in $G\texttt{-Zip}_k^{\chi}(S)$ are isomorphisms, $G\texttt{-Zip}_k^{\chi}$ is a category fibered in groupoids.
\begin{theorem}\label{mainthofGzip}
The fibered category $G\texttt{-}\mathtt{Zip}_k^{\chi}$ is a
smooth algebraic stack of dimension 0 over $k$.
\end{theorem}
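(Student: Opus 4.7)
The plan is to identify $G\texttt{-Zip}_k^{\chi}$ with a quotient stack of the form $[E_\chi \backslash G_k]$, where $E_\chi$ is the so-called zip group, and then deduce all of the assertions from general properties of quotient stacks by smooth affine group schemes.

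First I would construct the zip group. Define
\[
E_\chi := \{(p_+, p_-) \in P_+ \times P_-^{(p)} \mid \sigma(\overline{p_+}) = \overline{p_-}\},
\]
where $\overline{p_+} \in L$ is the image of $p_+$ under the projection $P_+ \twoheadrightarrow P_+/U_+ = L$, and $\overline{p_-} \in L^{(p)}$ is the image of $p_-$ under $P_-^{(p)} \twoheadrightarrow L^{(p)}$. Since the Frobenius $\sigma \colon L \to L^{(p)}$ is a homomorphism and $P_+, P_-^{(p)}, L^{(p)}$ are all smooth affine $k$-groups, $E_\chi$ is a smooth affine closed subgroup of $P_+ \times P_-^{(p)}$ and a computation of tangent spaces (or the fibration $E_\chi \to L$ via $(p_+, p_-) \mapsto \overline{p_+}$ with fiber $U_+ \times U_-^{(p)}$) gives $\dim E_\chi = \dim U_+ + \dim L + \dim U_- = \dim G$.

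Next I would define an action of $E_\chi$ on $G_k$ by $(p_+, p_-) \cdot g := p_+ g p_-^{-1}$, and build a morphism $G_k \to G\texttt{-Zip}_k^\chi$ by sending $g \in G(S)$ to the standard $G$-zip built from the trivial torsor: take $I = G_{k,S}$, let $I_+ = P_{+,S}$ sitting inside $I$ in the canonical way, let $I_- = g \cdot P_{-,S}^{(p)}$, and let $\iota \colon I_+^{(p)}/U_+^{(p)} \to I_-/U_-^{(p)}$ be the isomorphism induced by right translation by $g$ (both sides being canonically isomorphic to $L^{(p)}$, with $I_-/U_-^{(p)}$ identified via the coset of $g$). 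One checks this factors through $[E_\chi \backslash G_k]$: isomorphisms in the source correspond precisely to translating $g$ by elements of $E_\chi$ on the left (the $P_+$-component acts via the automorphism of the trivial $P_+$-torsor, the $P_-^{(p)}$-component reparametrises $I_-$, and the compatibility with $\iota$ is exactly the condition $\sigma(\overline{p_+}) = \overline{p_-}$).

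The central step, and the main obstacle, is to show that the resulting morphism of stacks
\[
[E_\chi \backslash G_k] \longrightarrow G\texttt{-Zip}_k^{\chi}
\]
is an equivalence. For fully faithfulness one chases automorphisms of the standard $G$-zip and recovers exactly $E_\chi$. For essential surjectivity one must show every $G$-zip $\underline{I}$ over $S$ is fppf-locally isomorphic to a standard one: after trivialising $I$ fppf-locally, the sub-torsors $I_+$ and $I_-$ become cosets $g_+ P_+$ and $g_- P_-^{(p)}$ for sections $g_\pm$ of $G$, which after absorbing $g_+$ into the trivialisation of $I$ can be arranged so that $I_+$ is standard; the datum of $\iota$ then pins down $g := g_-$ up to the action of $E_\chi$. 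Once this equivalence is established, $G\texttt{-Zip}_k^\chi \cong [E_\chi \backslash G_k]$ is a quotient of a smooth $k$-scheme by a smooth affine group scheme acting, hence is a smooth algebraic stack over $k$ of dimension $\dim G_k - \dim E_\chi = 0$, proving the theorem.
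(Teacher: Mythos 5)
Your argument is correct and takes essentially the same route as the paper: the paper's own proof is a bare citation of Corollary 3.12 of \cite{zipaddi}, and your quotient-stack strategy is precisely the one from that reference, which the paper itself lays out immediately after the theorem via the construction of $E_{G,\chi}$, the standard zips $\underline{I}_g$, and the equivalence $[E_{G,\chi}\backslash G_k]\cong G\texttt{-}\mathtt{Zip}_k^{\chi}$. As a bonus, your normalization absorbing $g_+$ into the trivialization of $I$ quietly sidesteps the $a$ versus $a^{(p)}$ slip that the paper flags in the proof of \cite{zipaddi} Lemma~3.5.
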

\begin{proof}
This is \cite{zipaddi} Corollary 3.12.
\end{proof}

\subsubsection[]{Some technical constructions about $G$-zips}

We need more information about the structure of
$G\texttt{-Zip}_k^{\chi}$. First, we need to introduce some
standard $G$-zips as in~\cite{zipaddi}.
\begin{construction}\label{constG-zip}(\cite{zipaddi} Construction 3.4)
Let $S/k$ be a scheme. For a section $g\in
G(S)$, one associates a $G$-zip of type $\chi$ over $S$ as
follows. Let $I_g=S\times_kG_k$ and $I_{g,+}=S\times_kP_+\subseteq
I_g$ be the trivial torsors. Then $I_g^{(p)}\cong
S\times_kG_k=I_g$ canonically, and we define $I_{g,-}\subseteq
I_g$ as the image of $S\times_k P_-^{(p)}\subseteq S\times_k G_k$
under left multiplication by $g$. Then left multiplication by $g$
induces an isomorphism of $L^{(p)}$-torsors
$$\iota_g:I_{g,+}^{(p)}/U_+^{(p)}=S\times_kP_+^{(p)}/U_+^{(p)}\cong S\times_kP_-^{(p)}/U_-^{(p)}
\stackrel{\sim}{\rightarrow}g(S\times_kP_-^{(p)})/U_-^{(p)}=I_{g,-}/U_-^{(p)}.$$ We thus
obtain a $G$-zip of type $\chi$ over $S$, denoted by
$\underline{I}_g$.
\end{construction}
\begin{lemma}
Any $G$-zip of type $\chi$ over $S$ is \'{e}tale locally of the
form $\underline{I}_g$.
\end{lemma}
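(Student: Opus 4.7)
The plan is to reduce to the standard form of Construction \ref{constG-zip} by trivializing the various torsors on a suitable étale cover of $S$, and then to modify the resulting section $g\in G(S)$ so that the isomorphism data matches. Since $P_+$, $P_-^{(p)}$ and $L^{(p)}$ are all smooth affine group schemes over $k$, torsors under any of them are étale-locally trivial, which is what makes such a successive trivialization possible.

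First I would pass to an étale cover $S'\to S$ over which the $P_+$-torsor $I_+$ becomes trivial. A trivialization of $I_+$ induces, via the inclusion $I_+\hookrightarrow I$ together with extension of structure group along $P_+\hookrightarrow G_k$, a trivialization $I\cong S'\times_k G_k$ identifying $I_+$ with the standard subtorsor $S'\times_k P_+$. After a further étale localization I can trivialize the $P_-^{(p)}$-torsor $I_-$; this amounts to choosing a section $g\in I(S')=G(S')$ lying in $I_-(S')$, and such a section identifies $I_-$ with $g\cdot(S'\times_k P_-^{(p)})$. At this stage the underlying triple of torsors is precisely that of $\underline{I}_g$.

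It remains to arrange that the given $\iota$ matches $\iota_g$. Both are isomorphisms of $L^{(p)}$-torsors $I_+^{(p)}/U_+^{(p)}\to I_-/U_-^{(p)}$, so their composite $\iota_g^{-1}\circ\iota$ is an automorphism of the first, hence, using the trivialization already obtained, is given by left multiplication by some section $\ell\in L^{(p)}(S')$. I would then replace $g$ by $g':=g\ell$, where $\ell$ is viewed as an element of $P_-^{(p)}$ via the inclusion $L^{(p)}\hookrightarrow P_-^{(p)}$. Since $\ell\in P_-^{(p)}$ we have $I_{g',-}=g\ell\cdot(S'\times_k P_-^{(p)})=I_{g,-}=I_-$, while a direct computation from Construction \ref{constG-zip} shows that $\iota_{g'}$ differs from $\iota_g$ by exactly left multiplication by $\ell$ in the natural parametrization $\lambda\mapsto g\lambda U_-^{(p)}$ of $I_-/U_-^{(p)}$ by $L^{(p)}$. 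Consequently $\iota_{g'}=\iota$, and $\underline{I}|_{S'}\cong \underline{I}_{g'}$.

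The main obstacle is this final bookkeeping step: the modification of $g$ must lie in $P_-^{(p)}$ and be applied on the correct side, so that the subtorsor $I_-$ is preserved while the $L^{(p)}$-torsor isomorphism $\iota_g$ is twisted by precisely the prescribed element $\ell$. Everything else reduces to a standard successive trivialization of smooth torsors and étale descent.
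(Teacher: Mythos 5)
Your proof is correct and follows essentially the same strategy as the paper's: trivialize $I_+$ étale-locally to get a trivialization of $I$ with $I_+$ standard, obtain $g$ from a section of $I_-$, and arrange that $\iota$ agrees with $\iota_g$, taking care (as the paper emphasizes against the published argument in \cite{zipaddi}) that $g$ is a section of $I_-\subset I$ rather than of $I^{(p)}$. The only difference is cosmetic: the paper chooses the section $b$ of $I_-$ to be a lift of $\iota(a^{(p)})$ along the smooth surjection $I_-\to I_-/U_-^{(p)}$, so that $\iota_g=\iota$ holds at once, whereas you pick an arbitrary trivializing section of $I_-$ and then correct $g$ by the unique $\ell\in L^{(p)}(S')$ measuring the discrepancy; both yield the same $g$ up to right multiplication by $U_-^{(p)}$.
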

\begin{proof}
This is \cite{zipaddi} Lemma 3.5.
\end{proof}
Now we will explain how to write $G\texttt{-Zip}_k^{\chi}$ in
terms of quotient of an algebraic variety by the action of a
linear algebraic group following \cite{zipaddi} Section~3.

Denote by $\mathrm{Frob}_p:L\rightarrow L^{(p)}$ the relative
Frobenius of $L$, and by $E_{G,\chi}$ the fiber product
$$\xymatrix@C=0.7cm{E_{G,\chi}\ar[d]\ar[rr]&&P_-^{(p)}\ar[d]\\
P_+\ar[r]& L\ar[r]^{\mathrm{Frob}_p}&L^{(p)}.}$$ Then we have \begin{subeqn}\label{eqnE-act}
E_{G,\chi}(S)=\{(p_+:=lu_+,\ p_-:=l^{(p)}u_-):l\in L(S), u_+\in
U_+(S), u_-\in U_-^{(p)}(S)\}.
\end{subeqn}
It acts on $G_k$ from the left hand side as follows. For $(p_+,p_-)\in E_{G,\chi}(S)$ and $g\in G_k(S)$,
$(p_+,p_-)\cdot g:=p_+gp_-^{-1}.$

To relate $G\texttt{-Zip}_k^{\chi}$ to the quotient stack
$[E_{G,\chi}\backslash G_k]$, we need the following constructions
in \cite{zipaddi}. First, for any two sections $g,g'\in G_k(S)$,
there is a natural bijection between the set
$$\text{Transp}_{E_{G,\chi}(S)}(g,g'):=\{(p_+,p_-)\in E_{G,\chi}(S)\mid p_+gp_-^{-1}=g'\}$$
and the set of morphisms of $G$-zips $\underline{I}_g\rightarrow
\underline{I}_{g'}$ (see \cite{zipaddi} Lemma 3.10). So we define
a category $\mathcal {X}$ fibered in groupoids over the category
of $k$-schemes as
follows. For any scheme $S/k$, let $\mathcal {X}(S)$ be the small
category whose underly set is $G(S)$, and for any two elements
$g,g'\in G(S)$, the set of morphisms is the set
$\text{Transp}_{E_{G,\chi}(S)}(g,g')$.
\begin{theorem}\label{mainPWZ}
There is a fully faithful morphism $\mathcal {X}\rightarrow
G\text{-}\mathtt{Zip}_k^{\chi}$ given by sending $g\in \mathcal
{X}(S)=G(S)$ to $\underline{I}_{g}$. It induces an isomorphism
$[E_{G,\chi}\backslash G_k]\rightarrow
G\text{-}\mathtt{Zip}_k^{\chi}$.
\end{theorem}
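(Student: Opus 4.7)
The plan is to decompose the statement into two parts: (1) the functor $\mathcal{X}\to G\texttt{-Zip}_k^{\chi}$ is fully faithful, and (2) the induced morphism from the stackification of $\mathcal{X}$ gives an equivalence with $G\texttt{-Zip}_k^{\chi}$; together with identifying that stackification as $[E_{G,\chi}\backslash G_k]$.

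First, I would verify full faithfulness directly from the definitions. By the construction of $\mathcal{X}$, the hom-set $\mathrm{Hom}_{\mathcal{X}(S)}(g,g')$ is defined to be $\mathrm{Transp}_{E_{G,\chi}(S)}(g,g')$. The bijection stated just before the theorem (\cite{zipaddi} Lemma 3.10), between this transporter and $\mathrm{Hom}(\underline{I}_g,\underline{I}_{g'})$ in $G\texttt{-Zip}_k^{\chi}(S)$, is by construction realized by sending $(p_+,p_-)$ to the pair of $G$-equivariant maps induced by left multiplication; this is precisely the map of hom-sets coming from the functor $g\mapsto \underline{I}_g$. So the functor is fully faithful on every fiber.

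Next, I would identify the quotient stack $[E_{G,\chi}\backslash G_k]$ with the étale stackification of $\mathcal{X}$. By definition, $[E_{G,\chi}\backslash G_k]$ represents the stack of $E_{G,\chi}$-torsors over $S$ equipped with an $E_{G,\chi}$-equivariant map to $G_k$; equivalently, it is the stack associated (in the étale, or even fppf, topology) to the prestack $S\mapsto [G(S)/E_{G,\chi}(S)]$, which is precisely $\mathcal{X}$. Thus any morphism of fibered categories $\mathcal{X}\to\mathcal{Y}$ with $\mathcal{Y}$ a stack factors uniquely through $[E_{G,\chi}\backslash G_k]$; applying this to the fully faithful functor of the previous step and using that $G\texttt{-Zip}_k^{\chi}$ is a stack (as recorded in Theorem \ref{mainthofGzip}) produces a morphism $[E_{G,\chi}\backslash G_k]\to G\texttt{-Zip}_k^{\chi}$ which is still fully faithful, since full faithfulness is preserved under stackification of the source.

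Finally, I would deduce essential surjectivity, and hence that the morphism is an isomorphism of stacks. This is where the preceding lemma is used: every $G$-zip of type $\chi$ over $S$ is étale locally isomorphic to some $\underline{I}_g$. After passing to such an étale cover and descending (which we are allowed to do because both sides are stacks and we have full faithfulness to glue isomorphisms), essential surjectivity of the stackified morphism follows. Combined with full faithfulness this gives an equivalence of stacks. The main thing to be careful about is the stackification step: one must check that fully faithful, étale-locally essentially surjective morphisms with stack target give equivalences after stackification of the source, which is a standard 2-categorical descent argument. The rest is formal once the transporter-to-morphisms bijection and the local triviality lemma are in hand.
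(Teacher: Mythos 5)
Your proof is correct. The paper's own ``proof'' of this theorem is simply the citation ``This is Proposition 3.11 in \cite{zipaddi},'' so you are not diverging from the paper but rather supplying the argument behind the cited result, and you do so along what is indeed the standard route: full faithfulness of $\mathcal{X}\to G\texttt{-Zip}_k^{\chi}$ comes straight from the transporter bijection (\cite{zipaddi} Lemma 3.10, recorded by the paper just before the theorem); the lemma on étale-local triviality of $G$-zips gives local essential surjectivity; $[E_{G,\chi}\backslash G_k]$ is identified as the stackification of the action-groupoid prestack $\mathcal{X}$; and, since $G\texttt{-Zip}_k^{\chi}$ is already a stack (Theorem \ref{mainthofGzip}), full faithfulness is inherited by the stackified morphism and descent upgrades local essential surjectivity to global, giving the equivalence. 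The one step you flag as needing care -- that a fully faithful, étale-locally essentially surjective morphism from a prestack into a stack becomes an equivalence after stackifying the source -- is exactly the standard 2-descent argument and is sound; you might just note explicitly that the Hom-presheaves of $\mathcal{X}$ are already sheaves (being represented by transporter subschemes of $E_{G,\chi}$), so full faithfulness of the stackified map on objects coming from $\mathcal{X}$ is immediate and the rest follows by gluing.
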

\begin{proof}
This is \cite{zipaddi} Proposition 3.11.
\end{proof}

\

\section[Integral canonical models and $G$-zips]
{Integral canonical models and
$G$-zips}

\subsection[Construction of integral canonical models]
{Construction of integral canonical models}\label{shvoverview}

We will first follow \cite{introshv} to introduce Shimura
varieties, and then follow \cite{CIMK} to introduce integral
canonical models.
\begin{definition}\label{def shv} Let $G$ be a connected reductive group
over $\mathbb{Q}$. We will write $\mathbb{S}$ for the Deligne
torus
$\text{Res}_{\mathbb{C}/\mathbb{R}}(\mathbb{G}_{m,\mathbb{C}})$.
Let $h:\mathbb{S}\rightarrow G_{\mathbb{R}}$ be a homomorphism of
algebraic groups and $X$ be the $G(\mathbb{R})$-conjugacy class of
$h$. Then the pair $(G,X)$ is called a Shimura datum if the
following conditions are satisfied

1) $\mathrm{Ad}\circ h$ induces a Hodge structure of type
$(-1,1)+(0,0)+(1,-1)$ on $\text{Lie}(G_\mathbb{R})$.

2) The conjugation action of $h(i)$ on $G_\mathbb{R}^{\text{ad}}$
gives a Cartan involution.

3) $G^{\text{ad}}$ has no simple factor over $\mathbb{Q}$ onto
which $h$ has trivial projection.
\end{definition}
Let $(G,X)$ be a Shimura datum, and $K$ be a compact open subgroup of
$G(\mathbb{A}_f)$ which is small enough. The complex manifold
$\Sh_K(G,X)_{\mathbb{C}}=G(\mathbb{Q})\backslash (X\times G(\mathbb{A}_f)/K)$ has a unique structure of a complex quasi-projective variety by results of Baily-Borel. The Shimura datum $(G,X)$ gives the $G(\mathbb{R})$-orbit $X$ of the
real manifold
$\Hom_\mathbb{R}(\mathbb{S},G_\mathbb{R})(\mathbb{R})$. For $x\in X$ with corresponding homomorphism $h_x:\mathbb{S}\rightarrow
G_\mathbb{R}$, we have a cocharacter
$\xymatrix{\varpi_x:\mathbb{G}_{m,\mathbb{C}}\ar[r]^(0.45){\mathrm{id}\times 0}&
\mathbb{G}_{m,\mathbb{C}}\times \mathbb{G}_{m,\mathbb{C}}\ar[r]^(0.7){\cong}& \mathbb{S}_{\mathbb{C}}\ar[r]^{h_{x,\mathbb{C}}}&G_\mathbb{C}.
}$ The $G(\mathbb{C})$-orbit of $\varpi_x$ in $\Hom_\mathbb{C} (\mathbb{G}_{m,\mathbb{C}}, G_\mathbb{C})$ depends only on $X$ and is defined over a finite extension $E/\mathbb{Q}$, called the reflex field of $(G,X)$. By results of Deligne, Milne, Borovoi, Shih and others, $\Sh_K(G,X)_{\mathbb{C}}$ has a canonical model $\Sh_K(G,X)$ over $E$. We refer to \cite{introshv} Chapter 12 and
\cite{modelmonen} Chapter 2, especially 2.17 for more details.

Let $p\geq3$ be a prime, and $\intG$ be a reductive group over
$\mathbb{Z}_p$ whose generic fiber is $G_{\mathbb{Q}_p}$. Let $K=K_pK^p$ with
$K_p=\intG(\mathbb{Z}_p)$, and $K^p$ be an open compact
subgroup of $G(\mathbb{A}_f^p)$ which is small enough. Let $v$ be a
prime of $O_E$ over $(p)$, then $v$ is unramified over~$p$. We write $O_{E,(v)}$ for the localization of $O_E$ at $v$. Assume that the Shimura datum $(G,X)$ is of Hodge type, i.e. there
is an embedding of Shimura data $(G,X)\hookrightarrow
\big(\mathrm{GSp}(V,\psi),X'\big)$. Then by \cite{CIMK} Lemma 2.3.1 and 2.3.2, for the chosen $\intG$, there exists a lattice $V_{\mathbb{Z}}\subseteq V$, such that $\psi$
restricts to a pairing $V_{\mathbb{Z}}\times
V_{\mathbb{Z}}\rightarrow \mathbb{Z}$ and $G_{\mathbb{Z}_{(p)}}$,
the closure of $G$ in $\mathrm{GL}(V_{\mathbb{Z}_{(p)}})$ with
$\intG=G_{\mathbb{Z}_{(p)}}\times_{\mathbb{Z}_{(p)}}\mathbb{Z}_p$,
is reductive. Moreover, by \cite{CIMK} Proposition 1.3.2, there is a tensor $s\in
V_{\mathbb{Z}_{(p)}}^\otimes$ defining $G_{\mathbb{Z}_{(p)}}\subseteq \mathrm{GL}(V_{\mathbb{Z}_{(p)}})$, i.e. for any $\mathbb{Z}_{(p)}$-algebra $R$, we have $$G_{\mathbb{Z}_{(p)}}(R)=\{g\in \mathrm{GL}(V_{\mathbb{Z}_{(p)}})(R)\mid g(s\otimes 1)=s\otimes 1\}.$$ Here
$V_{\mathbb{Z}_{(p)}}:=V_{\mathbb{Z}}\otimes \mathbb{Z}_{(p)}$,
and $V_{\mathbb{Z}_{(p)}}^\otimes$ is a finite free
$\mathbb{Z}_{(p)}$-module which is obtained from
$V_{\mathbb{Z}_{(p)}}$ by using the operations of taking duals,
tensor products, symmetric powers, exterior powers and direct sums
finitely many times.

Let $K'_p\subseteq \mathrm{GSp}(V_{\mathbb{Q}_p},\psi)$ be the
stabilizer of $\intV:=V_{\mathbb{Z}}\otimes \mathbb{Z}_p$. Then by
\cite{CIMK} Lemma 2.1.2, we can choose $K'=K'_pK'^p$ such that $K'^p$
contains $K^p$ and $K'$ leaves $V_{\widehat{\mathbb{Z}}}$ stable,
making the finite morphism
$$\Sh_{K}(G,X)\rightarrow\Sh_{K'}(\mathrm{GSp}(V,\psi),X')_E$$
a closed embedding.

Let $d=|V_\mathbb{Z}^\vee/V_\mathbb{Z}|$, and $g=\dim(V)/2$. Then
$\Sh_{K'}(\mathrm{GSp}(V,\psi),X')$ is closed in the generic fiber of
$\mathscr{A}_{g,d,K'}\otimes O_{E,(v)}$, where $\mathscr{A}_{g,d,K'}$ is the fine moduli scheme of $g$-dimensional
abelian schemes over $\mathbb{Z}_{(p)}$-schemes equipped with
a degree $d$ polarization and a level $K'$-structure (see \cite{GIT}
Theorem 7.9). Let $\ES_K(G,K)^-$ be the Zariski closure of
$\Sh_{K}(G,X)$ in $\mathscr{A}_{g,d,K'}\otimes O_{E,(v)}$ with the
reduced induced scheme structure, and $\ES_K(G,X)$ be the
normalization of $\ES_K(G,K)^-$. Let $\mathcal {A}$ be the
universal abelian scheme on $\mathscr{A}_{g,d,K'}$. Then
$$\mathcal {V}=\mathrm{H}^1_{\dr}(\mathcal
{A}|_{\Sh_K(G,X)}/\Sh_K(G,X))\ \ \big(\mathrm{resp}.\
\V=\mathrm{H}^1_{\dr}(\mathcal {A}|_{\ES_K(G,X)}/\ES_K(G,X))
\big)$$ is a vector bundle on $\Sh_K(G,X)$ (resp. $\ES_K(G,X)$).
By the construction of \cite{CIMK} Section 2.2, the tensor $s\in
V_{\mathbb{Z}_{(p)}}^\otimes$ gives a section $\sdr$ of $\mathcal
{V}^\otimes$ which is horizontal with respect to the Gauss-Manin connection.

Here we collect some of the main results in \cite{CIMK}.
\begin{theorem}\label{kisin}

\

1) The scheme $\ES_K(G,X)$ is smooth over $O_{E,(v)}$, and
$$\ES_{K_p}(G,X):=\varprojlim_{K^p}\ES_{K_pK^p}(G,X)$$ is an inverse
system with finite \'{e}tale transition maps, whose generic fiber
is $G(\mathbb{A}_f^p)$-equivariantly isomorphic to
$\Sh_{K_p}(G,X):=\varprojlim_{K^p}\Sh_{K_pK^p}(G,X)$.

2) The scheme $\ES_{K_p}(G,X)$ satisfies the a certain extension
property.

Namely, for any regular and formally smooth $O_{E,(v)}$-scheme
$X$, any morphism $X\otimes E\rightarrow \ES_{K_p}(G,X)$ extends
uniquely to a morphism $X\rightarrow \ES_{K_p}(G,X)$.

3) The section $\sdr$ extends to a section of $\V{^\otimes}$ which
will still be denoted by $\sdr$. For any closed point $x\in
\ES_{K}(G,X)\otimes \mathbb{F}_p$ and any lifting
$\widetilde{x}\in \ES_{K}(G,X)(W(k(x)))$, we have

3.a) the scheme $\Isom_{W(k(x))}\big((\intV^\vee\otimes W(k(x)),
s\otimes 1), (\V_{\widetilde{x}}, s_{\dr,\widetilde{x}})\big)$ is
a trivial right $\intG\otimes W(k(x))$-torsor;

3.b) for any $t\in \Isom_{W(k(x))}\big((\intV^\vee\otimes W(k(x)),
s\otimes 1), (\V_{\widetilde{x}},
s_{\dr,\widetilde{x}})\big)(W(k(x)))$, $\intG\otimes W(k(x))$ acts
faithfully on $\V_{\widetilde{x}}$ via $g(v):=tgt^{-1}(v)$, for
all $v\in \V_{\widetilde{x}}$. The Hodge filtration on
$\V_{\widetilde{x}}$ is induced by a cocharacter of $\intG\otimes
W(k(x))$.
\end{theorem}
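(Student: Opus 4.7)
The plan is to follow Kisin's strategy in \cite{CIMK}, handling the three parts in sequence, with the bulk of the effort concentrated in parts (1) and (3), which share the same deformation-theoretic input. Part (1) reduces, by normality of $\ES_K(G,X)$, to showing that the completed local ring at every closed point $x$ of the special fiber is formally smooth over $W(k(x))$. The plan is: pick a lift $\widetilde{x}$ to characteristic zero (which exists after possibly enlarging the residue field, since the image in $\mathscr{A}_{g,d,K'}$ lifts and points of $\Sh_K(G,X)$ are dense in $\ES_K(G,X)^-$), attach to $\widetilde{x}$ its $p$-divisible group $\mathcal{G}_{\widetilde{x}}$ together with the crystalline realizations of the tensor $s$, and then invoke Faltings' deformation theory to build the universal tensor-preserving deformation of $\mathcal{G}_{\widetilde{x}}$. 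This universal deformation space is a formally smooth formal scheme over $W(k(x))$ of the expected dimension, governed by the parabolic $P_\chi \subseteq \intG_{W(k(x))}$ stabilizing the Hodge filtration on $\V_{\widetilde{x}}$. The identification of this formal deformation space with the completed local ring of $\ES_K(G,X)$ at $x$ is the technical heart and yields smoothness; the statement about $\ES_{K_p}(G,X)$ having finite \'etale transition maps then follows formally from the behavior of prime-to-$p$ level structures on the universal abelian scheme.

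Part (2), the extension property, is proved by combining two ingredients. First, for a regular formally smooth $O_{E,(v)}$-scheme $X$, any morphism $X \otimes E \to \Sh_{K_p}(G,X)$ induces in particular a morphism to $\mathscr{A}_{g,d,K'} \otimes E$, and the N\'eron--Ogg--Shafarevich criterion together with the purity of the branch locus extends the resulting abelian scheme uniquely over all of $X$ (the level structure extends since $X \otimes E$ is dense). Second, one must verify that the extended map factors through $\ES_K(G,X)$. The target $\ES_K(G,X)^-$ absorbs the map because it is defined as a Zariski closure and $X \otimes E \to \Sh_K(G,X)$ factors through it by construction; the factorization through the normalization $\ES_K(G,X)$ then uses regularity of $X$. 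Independence from the symplectic embedding follows because any two embeddings make $\ES_{K_p}(G,X)$ a pro-\'etale cover satisfying the same universal property.

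For part (3), the extension of $\sdr$ to a global section of $\V^{\otimes}$ is proved locally around each closed point using the crystalline interpretation: over the formal neighborhood at $x$, the de Rham cohomology is canonically identified with the evaluation of the crystalline cohomology of $\mathcal{G}_{\widetilde{x}}$, and by construction the tensor $s$ corresponds to a horizontal (hence globally defined) section in the crystalline picture. Part 3.a) then says precisely that the pair $(\V_{\widetilde{x}}, s_{\dr,\widetilde{x}})$ is locally isomorphic to $(\intV \otimes W(k(x)), s \otimes 1)$, which follows from the tensorial characterization of $\intG$ inside $\mathrm{GL}(\intV)$ combined with the crystalline comparison; 3.b) is an immediate consequence. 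Finally, the assertion that the Hodge filtration is induced by a cocharacter of $\intG \otimes W(k(x))$ is built into Faltings' deformation setup used in part (1), since the parabolic stabilizing the Hodge filtration is the parabolic $P_\chi \subseteq \intG_{W(k(x))}$ attached to the Hodge cocharacter $\mu$. The main obstacle throughout is identifying the tensor-preserving Faltings deformation space with the completed local ring of $\ES_K(G,X)$; this requires the non-trivial fact that on formal neighborhoods of closed points in $\ES_K(G,X)^-$, the crystalline tensors deform horizontally and cut out exactly the locus inside $\mathscr{A}_{g,d,K'}$ that lifts to the integral canonical model.
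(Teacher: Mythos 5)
Your proposal aims to re-derive Kisin's theorem from first principles, sketching Faltings's deformation theory, the Néron/extension-property argument, and crystalline comparisons. The paper's own proof does none of this: it records the theorem as a direct citation to \cite{CIMK} (parts (1)--(2) are \cite{CIMK} Theorem 2.3.8, the first sentence of (3) is \cite{CIMK} Corollary 2.3.9), and the only genuine content in the paper's proof is the extraction of 3.a) and 3.b) from statements that Kisin phrases slightly differently. For 3.a), the paper composes the comparison isomorphism between the $p$-adic \'etale cohomology and the Dieudonn\'e module (\cite{CIMK} Corollary 1.4.3 (3)) with the canonical identification of the latter with de Rham cohomology, checking that the tensors match up; for 3.b), it points out that Kisin's proof of Corollary 1.4.3 (4) actually establishes the $\intG$-filtration statement over $W(k(x))$, not just over the residue field as Kisin states. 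Your proposal does not engage with this extraction step, which is precisely where the paper has something to say.

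On the merits of your outline as an outline of Kisin's argument, there are two imprecise points worth flagging. First, the existence of a $W(k(x))$-lift $\widetilde{x}$ cannot be deduced just from density of $\Sh_K(G,X)$ in $\ES_K(G,X)^-$: density gives characteristic-zero points \emph{near} $x$, not lifts \emph{of} $x$; Kisin obtains lifts through the deformation-theoretic identification of the complete local ring with $R_G$, not by a density argument. Second, the extension property for the Siegel moduli scheme does not use purity of the branch locus: it comes from the N\'eron--Ogg--Shafarevich criterion (the abelian scheme extends because it already exists over the generic fiber of a healthy regular scheme), with the prime-to-$p$ level structure extending by \'etaleness once the abelian scheme is in place. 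Purity of branch locus enters elsewhere in Vasiu's work but is not what the paper or Kisin invokes here. These are small slips, but since the result in question is itself a black-box import from \cite{CIMK}, your energy is better spent on the places where the paper departs from a pure citation, namely 3.a) and 3.b).
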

\begin{proof}
1) and 2) are \cite{CIMK} Theorem 2.3.8 (1) and (2) respectively.
The first sentence of 3) is Corollary 2.3.9 in \cite{CIMK}.

The proof of 3.a) is hidden inside \cite{CIMK}. We write $F$ for
$q.f.(W(k(x)))$ and $\widetilde{x}_F$ for the $F$-point of
$\ES_{K}(G,X)$ given by the composition $$\Spec (F)\hookrightarrow
\Spec(W(k(x)))\stackrel{\widetilde{x}}{\rightarrow}\ES_{K}(G,X).$$
Let $\mathcal {A}$ be the universal abelian scheme as before. Then
there is an isomorphism $\intV^\vee\rightarrow
\mathrm{H}^1_{\text{\'{e}t}}(\mathcal
{A}_{\widetilde{x}_F},\mathbb{Z}_p)$ taking $s$
to~$s_{\text{\'{e}t},\widetilde{x}_F}$. Here the right hand side
is the $p$-adic \'{e}tale cohomology of the abelian variety
$\mathcal {A}_{\widetilde{x}_F}$ over $F$.

Then by 3) of Corollary 1.4.3 in \cite{CIMK}, there is an
isomorphism
$$\mathrm{H}^1_{\text{\'{e}t}}(\mathcal
{A}_{\widetilde{x}_F},\mathbb{Z}_p)\otimes_{\mathbb{Z}_p}W(k(x))\rightarrow
\mathbb{D}(\mathcal {A}_x[p^\infty])(W(k(x)))$$ taking
$s_{\text{\'{e}t},\widetilde{x}_F}\otimes 1$ to a
Frobenius-invariant tensor $s_0$. Here we write $\mathbb{D}(-)$
for the Dieudonn\'{e} functor as in \cite{CIMK}. We remark that
this is just an isomorphism, which is highly non-canonical. But by
the construction in \cite{CIMK} Corollary 2.3.9, $s_0$ gives
$s_{\dr,\widetilde{x}}$ by the canonical identification
$$\mathbb{D}(\mathcal
{A}_x[p^\infty])(W(k(x)))\cong\mathrm{H}^1_{\dr}(\mathcal
{A}_{\widetilde{x}}/W(k(x))).$$ This proves 3.a).

For 3.b), see the proof of \cite{CIMK} Corollary 1.4.3 4). Note
that Kisin actually proves the Hodge filtration on
$\mathrm{H}^1_{\dr}(\mathcal {A}_{\widetilde{x}})$ is a
$\intG$-filtration, but in his statement, he only states it for
the special fiber.
\end{proof}

\subsection[Construction of the $G$-zip at a point]
{Construction of the $G$-zip at a point}

In this and the following subsections, we show how to get a $\overline{G}$-zips
over $\ES_0$ using $\rV$, where $\overline{G}$ is the special fiber of $\intG$ considered in the previous subsection. We use `$G$-zip' in the title here (and also that of 2.2.12, 2.3), as we want to keep the notations in titles simple and coherent.

We will first say something about
cocharacters inducing the Hodge filtrations, as they are crucial
data in the definition of $\overline{G}$-zips, and will also be used in 2.4
to get the torsor $I$ over $\ES_0$.
\subsubsection[]{Basics about cocharacters}

\begin{proposition}\label{basics on cocha}
Let $G$ be a reductive group over a scheme $S$ and $\Hom(\mathbb{G}_{m},G)$ be
the fpqc-sheaf of cocharacters denoted by $Z$. Then we have the followings.

1) $Z$ is represented by a smooth and separated scheme
over $S$;

2) The fpqc-quotient of $Z$ by the adjoint action
of $G$ is represented by a disjoint union of connected finite \'{e}tale
$S$-schemes.

3) Assume that $G$ has a maximal torus $T$ over $S$. Let $X_*(T)$ be the scheme of cocharacters, and $W$ be the Weyl group scheme with respect to $T$. Then

3.1) $T\subseteq G$ induces an isomorphism of fpqc-sheaves $W\backslash X_*(T)\cong G\backslash Z$;

3.2) if $S=\Spec R$ with $R$ a henselian local ring with residue field $k$ such that $G_k$ is quasi-split, then the natural map $X_*(T)(R)\rightarrow (W\backslash X_*(T))(R)$ is surjective.
\end{proposition}
\begin{proof}
The first statement follows from Corollary 4.2 in
\cite{SGA3} Chapter XI. For 2), one can work with open affines of $S$. But by Corollary 3.20 in Chapter XIV of \cite{SGA3}, maximal
tori exist Zariski locally. We can assume that $S$ is affine such that there exists a maximal torus $T\subseteq G$. Note that $X_*(T)$
is \'{e}tale and locally finite over $S$, and that
$W$ is a finite \'{e}tale group scheme (see 3.1 in Chapter XXII of
\cite{SGA3}).

Maximal tori are fpqc locally $G$-conjugate, so the inclusion $T\subseteq G$ induces an isomorphism of
fpqc-sheaves $W\backslash X_*(T)\cong G\backslash Z.$ To prove 2), it suffices to prove that
$W\backslash X_*(T)$ is represented by an \'{e}tale and locally
finite scheme over $S$. To see that $W\backslash
X_*(T)$ is representable, note that $W\times X_*(T)$ and
$X_*(T)\times X_*(T)$ are both \'{e}tale over
$S$, so the morphism $$\alpha:W\times
X_*(T)\rightarrow X_*(T)\times X_*(T), \ \ \ (w, \nu)\mapsto (\nu,
w\cdot \nu)$$ is also \'{e}tale, and hence has open image. But it
is also closed, since there is a finite \'{e}tale cover $S'$ of
$S$, such that $W$ and $X_*(T)$ become
constant, and the image of $\alpha$ is just copies of $S'$,
which is closed in $(X_*(T)\times X_*(T))_{S'}$. Let $R$ be the image of $\alpha$, then its projections to $X_*(T)$ induced by projections of $X_*(T)\times X_*(T)$ to its factors are both finite. So by
\cite{algespace} Chapter 1, Proposition 5.14 and Proposition 5.16 b),
$W\backslash X_*(T)$ is represented by an \'{e}tale separated scheme
over $S$.

To see that the quotient is locally finite, one
still works over~$S'$. For an $S'$-point of $X_*(T)_{S'}$, its orbit
under $W(S')$ is just copies of $S'$. Let $X'\subseteq X_*(T)_{S'}$
be an open and closed subscheme such that it contains precisely one copy
of $S'$ in each $W(S')$-orbit. Then $X'\cong (W\backslash
X_*(T))_{S'}$, and hence $W\backslash X_*(T)$ is locally finite.

To finish the proof of the proposition, we only need to prove 3.2). If $R=k$ is a field, then the statement follows from \cite{shv and twisted orb int} Lemma 1.1.3. We remark that although it is stated for fields containing $\mathbb{Q}$ there, its proof works for general fields. But for a henselian local ring $R$, noting that both $X_*(T)$ and $W\backslash
X_*(T)$ are \'{e}tale, we have $X_*(T)(R)=X_*(T)(k)$ and $(W\backslash
X_*(T))(R)=(W\backslash
X_*(T))(k)$, and hence 3.2).
\end{proof}

\subsubsection[]{A cocharacter defined over $W(\kappa)$}\label{cocha over W}Now we come back to notations introduced after Definition \ref{def shv}. Let $\kappa=O_{E}/v$, and $Z=\Hom(\mathbb{G}_m,\inttG)$. Let $T\subseteq \inttG$ be a maximal torus, and $W_T$ be the Weyl group scheme, then by the above proposition, $\inttG\backslash Z\cong W_T\backslash
X_*(T)$ is a union of connected finite \'{e}tale
$\mathbb{Z}_{(p)}$-schemes. As explained at the beginning of 2.1, the Shimura datum gives a $G_\mathbb{C}$-orbit $[\varpi_x]$ of $Z_\mathbb{C}$ which is defined over $E$, and hence a connected component $C\cong\Spec O_{E,(p)}$ of $\inttG\backslash Z$. Noting that $\inttG\otimes\mathbb{F}_p$ is quasi-split, by 3.2) of the previous proposition, the $\kappa$-point of $C$ induced by $O_{E,(p)}\rightarrow O_{E}/v=\kappa$ comes from a $\kappa$-point of $X_*(T)$, which lifts to a $W(\kappa)$-point of $X_*(T)$. The cocharacter corresponding to this point is such that for any embedding $W(\kappa)\rightarrow \mathbb{C}$, its image in $Z_\mathbb{C}$ lies in $[\varpi_x]$. As by our construction, its image in $G_{W(\kappa)}\backslash Z_{W(\kappa)}$ lies in $C_{W(\kappa)}=O_{E,v}$.

\subsubsection[]{An easy lemma}\label{startconstr}
To get started, we need one preparation, namely the next lemma. It
is probably well known, but we still give a proof. It will be used
in \ref{ptconst}. We need to fix some notations to state and prove
it. Let $k$ be a finite field of characteristic $p$, $A$ be an
abelian scheme over $W(k)$, $\sigma$ be the ring automorphism
$W(k)\rightarrow W(k)$ which lifts the $p$-Frobenius isomorphism
on $k$. Denote by $M$ the module $\mathrm{H}^1_{\dr}(A/W(k))\cong
\mathrm{H}^1_{\mathrm{cris}}(A_k/W(k))$ (see \cite{crillu} 3.4.b,
and this isomorphism is functorial in $A$). Then the absolute Frobenius on
$A_k$ induces a $\sigma$-linear map $\varphi:M\rightarrow M$ (see
\cite{crillu} 2.5.3, 3.4.2) whose linearization will be denoted by
$\lin$. Let $M\supseteq M^1$ be the Hodge filtration. We know that
$M^1$ is a direct summand of $M$, and its reduction modulo $p$
gives the kernel of Frobenius $\bar{\varphi}$ on
$\mathrm{H}^1_{\dr}(A\otimes k/k)$. This implies that
$\varphi(M^1)\subseteq pM$, and hence $\varphi/p:M^1\rightarrow M$
is well-defined.
\begin{lemma}\label{isomorphism}
For any splitting $M=M^0\oplus M^1$, the linear map
$$\xymatrix@C=1.1cm{\alpha:M^{(\sigma)}:=M\otimes_{W(k),\sigma} W(k)=
M^{0(\sigma)}\oplus
M^{1(\sigma)}\ar[rrr]^(.74){\lin|_{M^{0(\sigma)}}+(\frac{\varphi}{p})^{\mathrm{lin}}|_{M^{1(\sigma)}}}
&&&M}$$ is an isomorphism.
\end{lemma}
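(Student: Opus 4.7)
The plan is to reduce the statement to a determinant calculation. Both $M$ and $M^{(\sigma)}$ are free $W(k)$-modules of the same rank $r+s$, where $r = \mathrm{rank}(M^0)$ and $s = \mathrm{rank}(M^1)$, so it suffices to prove that $\det(\alpha)$ is a unit in $W(k)$.

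First I would pick a basis of $M$ adapted to the splitting $M = M^0 \oplus M^1$, say $e_1,\ldots,e_r$ of $M^0$ together with $f_1,\ldots,f_s$ of $M^1$. In the induced basis on $M^{(\sigma)}$, the matrix of $\lin$ has block form $[B_0 \mid B_1]$, where the columns of $B_0$ are $\varphi(e_i)$ and the columns of $B_1$ are $\varphi(f_j)$. The hypothesis $\varphi(M^1) \subseteq pM$ forces $B_1 = pC_1$ for an integral matrix $C_1$, and the matrix of $\alpha$ in the same bases is exactly $[B_0 \mid C_1]$. Hence $\det(\lin) = p^s \det(\alpha)$, and the problem becomes showing $\mathrm{ord}_p \det(\lin) = s$.

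The key input, and the place where the abelian-scheme hypothesis is essential, is the Verschiebung $V : M \to M$ from the Dieudonn\'e theory of the $p$-divisible group $A[p^\infty]$, which satisfies $\varphi V = V\varphi = p$. From $\varphi V = p$ I get $pM \subseteq \varphi(M) = \mathrm{image}(\lin)$, so the cokernel of $\lin$ is annihilated by $p$; its $W(k)$-length therefore coincides with its $k$-dimension. That dimension is
\[ \dim_k \bar{M} - \dim_k \bar{\varphi}(\bar{M}) \;=\; \dim_k \ker(\bar{\varphi}) \;=\; \dim_k \bar{M}^1 \;=\; s, \]
where the middle equality uses the fact recalled just before the lemma that $\bar{M}^1 = \ker(\bar{\varphi})$. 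Since $\mathrm{ord}_p \det(\lin)$ equals the length of $\mathrm{coker}(\lin)$, this gives $\mathrm{ord}_p \det(\lin) = s$, hence $\det(\alpha) \in W(k)^\times$.

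The only non-trivial obstacle is pinning down $\mathrm{ord}_p \det(\lin)$ precisely: without the Verschiebung the mod-$p$ cokernel of $\bar{\varphi}^{\mathrm{lin}}$ would only bound the length from below, so the existence of $V$ is what converts a dimension count over $k$ into an exact $p$-adic valuation over $W(k)$.
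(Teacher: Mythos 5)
Your proof is correct, but it takes a genuinely different route from the paper's. The paper's proof invokes Laffaille's theorem that the filtered $F$-crystal attached to a smooth projective scheme over $W(k)$ with torsion-free $\mathrm{H}^1_{\dr}$ is strongly divisible, uses that to show $\alpha$ reduces to an isomorphism modulo $p$, and then applies Nakayama. You instead prove the lemma directly from two more elementary inputs specific to abelian schemes: the Verschiebung relation $\varphi V = V\varphi = p$ (giving $pM\subseteq\varphi(M)$, hence that $\mathrm{coker}(\lin)$ is $p$-torsion) and the identification $\bar M^1=\ker(\bar\varphi)$; these feed into a determinant-valuation count using the standard fact that $\mathrm{ord}_p(\det)$ equals the $W(k)$-length of the cokernel for an injective map of finite free modules of equal rank over a DVR. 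In effect, you give a self-contained proof of the strong-divisibility statement in the abelian-scheme case rather than citing the general theorem, which makes your argument shorter and avoids an external reference; the paper's approach would apply verbatim to the more general situation covered by Laffaille's result, but that generality is not needed here. Both arguments are rigorous; yours is arguably cleaner, and it also makes transparent \emph{why} the Verschiebung is the essential input, which you flag explicitly at the end.
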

\begin{proof}
Let $F$ (resp. $V$) be the Frobenius (resp. Verschiebung) on $M_k$. Then $(\frac{\varphi}{p})^{\mathrm{lin}}|_{M^{1(\sigma)}_k}$ is induced by $V^{\mathrm{lin},-1}:\mathrm{Im}(V^{\mathrm{lin}})\rightarrow M_k/\mathrm{Ker}(V^{\mathrm{lin}})$. So $\alpha_k$ is an isomorphism as $\mathrm{Ker}(F)=\mathrm{Im}(V)$ and $\mathrm{Ker}(V)=\mathrm{Im}(F)$, but then $\alpha$ is an isomorphism by Nakayama's lemma.
\end{proof}
\begin{notations}\label{notation1}
Now we will fix some notations that will be used later. Notations as in \ref{cocha over W}, we will write $W$ for $W(\kappa)$ for simplicity. By the discussions there, the orbit $[\varpi_x]$ gives a cocharacter $\mathbb{G}_{m}\rightarrow
\intG\otimes W$ which is unique up to $\intG(W)$-conjugacy. Its inverse will be denoted by $\mu$. The (contragredient) representation on $V_{\mathbb{Z}_{(p)}}^\vee\otimes W$ induced by $\mu$ has weights 0 and $1$. Since we are interested in reductions of
integral canonical models, we will work either over $W$ or over
$\kappa$. So we will simply write $\ES$ for
$\ES_K(G,X)\otimes_{O_{E,(v)}} W$, and $\ES_0$ for the special
fiber of $\ES$. We will write $\mathcal {A}$ for the pull back to
$\ES$ of the universal abelian scheme on $\mathscr{A}_{g,d,K'}$.
We will still denote by $\V$ (resp. $\sdr$) the pullback to $\ES$
of $\V$ (resp. $\sdr$) on $\ES_K(G,K)$ as in Theorem \ref{kisin}, and $\rV$ (resp. $\rsdr$) for the pull back to
$\ES_0$ of $\V$ (resp. $\sdr$) on $\ES$.
\end{notations}
\subsubsection[]{Basic properties of
$\sdr{_{,\widetilde{x}}}$ and $\varphi$}\label{remakKis}

Now we will discuss some basic properties of
$\sdr{_{,\widetilde{x}}}$ related to the Frobenius on
$\V_{\widetilde{x}}$ and the filtration on
$\V_{\widetilde{x}}{^\otimes}$ induced by the Hodge filtration. We
will keep the notations as in 3.b) of Theorem \ref{kisin}. In
particular, there is an element $t\in
\Isom_{W(k(x))}\big((\intV^\vee\otimes W(k(x)), s\otimes 1),
(\V_{\widetilde{x}}, s_{\dr,\widetilde{x}})\big)(W(k(x)))$. The
element $t$ will be fixed once and for all in our discussion.
Also, we will introduce some new notations as follows. Let
$$\mu':\mathbb{G}_{m,W(k(x))}\rightarrow \intG\otimes W(k(x))$$ be a
cocharacter such that $\mu'_t:=t\mu't^{-1}$ induces the Hodge
filtration on $\V_{\widetilde{x}}$ (The existence of $\mu'$ follows from Theorem \ref{kisin}, 3.b).). Note that $\mu'$ induces a
$W(k(x))$-point of $C$ introduced after the proof of Proposition \ref{basics on cocha}, as the
Hodge filtration on $\V_{\widetilde{x}}\otimes \mathbb{C}$ is always induced by a cocharacter conjugate to $\mu_{\mathbb{C}}$ (via the contragredient representation). In particular, $\mu'$ is $\intG(W(k(x)))$-conjugate to $\mu_{W(k(x))}$. We will write $\varphi$ for the
Frobenius on $\V_{\widetilde{x}}$ and
$\V_{\widetilde{x}}=(\V_{\widetilde{x}})^0\oplus(\V_{\widetilde{x}})^1$
for the splitting induced by $\mu'_t$, with $(\V_{\widetilde{x}})^i$ the sub-module of weight $i$. The filtration on
$\V_{\widetilde{x}}$ induces a filtration on
$\V_{\widetilde{x}}{^\otimes}$ by the constructions at the
beginning of Section 1.1. There is a Frobenius which is not
defined on $\V_{\widetilde{x}}{^\otimes}$, but on
$(\V_{\widetilde{x}}[\frac{1}{p}]){^\otimes}$ as follows. It is
the tensor product of $\varphi$ on
$\V_{\widetilde{x}}[\frac{1}{p}]$ and
$${}^{\vee}\!\varphi:\big(\V_{\widetilde{x}}[1/p])\big){^\vee}\rightarrow \big(\V_{\widetilde{x}}[1/p]\big){^\vee},
\ \ \ f\mapsto\sigma(f\circ\varphi^{-1}),\ \ \  \forall\ f\in
\V_{\widetilde{x}}{^\vee}$$ on
$(\V_{\widetilde{x}}[1/p])\big){^\vee}$. The induced Frobenius on
$(\V_{\widetilde{x}}[\frac{1}{p}]){^\otimes}$ will still be
denoted by~$\varphi$. It is known that $\sdr{_{,\widetilde{x}}}\in
\V_{\widetilde{x}}{^\otimes}$ actually lies in
$\Fil^0\V_{\widetilde{x}}{^\otimes}\subseteq \V_{\widetilde{x}}{^\otimes}$, the submodule of non-negative weights,  and that
$\sdr{_{,\widetilde{x}}}$ is $\varphi$-invariant (\cite{CIMK}
1.3.3, and we view $\sdr{_{,\widetilde{x}}}$ as an element in
$(\V_{\widetilde{x}}[\frac{1}{p}]){^\otimes}$ when considering the
$\varphi$-action).

We have the following better description.
\begin{proposition}\label{split0}
The Frobenius $\varphi$ takes integral value on
$\Fil^0\V_{\widetilde{x}}{^\otimes}$. Let
$(\V_{\widetilde{x}}{^\otimes})^0$ be the submodule of
$\V_{\widetilde{x}}{^\otimes}$ such that $\mu'_t(\mathbb{G}_m)$ acts
trivially, then $\sdr{_{,\widetilde{x}}}\in
(\V_{\widetilde{x}}{^\otimes})^0$.
\end{proposition}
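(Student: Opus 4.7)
The plan is to exploit the weight decomposition of $\V_{\widetilde{x}}{^\otimes}$ under the cocharacter $\mu'_t$ together with the strong divisibility structure of the Frobenius provided by Lemma~\ref{isomorphism}. Following the sign convention fixed in Proposition~\ref{cochamu} (where $\mu_x(z)$ acts as $z^{-p}$ on the Hodge piece $U^{p,q}$), $\mu'_t$ acts on $(\V_{\widetilde{x}})^0$ with weight $0$ and on $(\V_{\widetilde{x}})^1 = \Fil^1 \V_{\widetilde{x}}$ with weight $-1$. By functoriality of tensor products and duals, the filtration on $\V_{\widetilde{x}}{^\otimes}$ has the clean description that $\Fil^0 \V_{\widetilde{x}}{^\otimes}$ is exactly the sum of $\mu'_t$-weight spaces of weight $\leq 0$; in particular $(\V_{\widetilde{x}}{^\otimes})^0$ is the weight-$0$ summand.

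For the first assertion, I would introduce the $\sigma$-linear endomorphism $\psi$ of $\V_{\widetilde{x}}$ defined by $\psi|_{(\V_{\widetilde{x}})^0} = \varphi|_{(\V_{\widetilde{x}})^0}$ and $\psi|_{(\V_{\widetilde{x}})^1} = (\varphi/p)|_{(\V_{\widetilde{x}})^1}$. This is integral because $\varphi(\Fil^1) \subseteq p\V_{\widetilde{x}}$, and Lemma~\ref{isomorphism} says precisely that its linearization is a $W(k(x))$-linear isomorphism. Hence $\psi$ extends functorially to an integral $\sigma$-linear operator $\psi_\otimes$ on $\V_{\widetilde{x}}{^\otimes}$; the dualization step is permitted by the iso property of $\psi^{\mathrm{lin}}$. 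A direct check on each weight component yields the identity $\varphi = \psi \circ \mu'_t(p)^{-1}$ on $\V_{\widetilde{x}}[1/p]$ (the element $\mu'_t(p)$ lives naturally in $\intG(W(k(x))[1/p])$), and by functoriality this extends to $\varphi = \psi_\otimes \circ \mu'_t(p)^{-1}$ on $\V_{\widetilde{x}}{^\otimes}[1/p]$. Since $\mu'_t(p)^{-1}$ acts on the weight-$w$ summand by multiplication by $p^{-w}$, the Frobenius acts as $p^{-w}\psi_\otimes$ on that piece, which is integral exactly when $w \leq 0$; this gives the first statement.

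For the second assertion, the key point is that $s$ is a defining tensor for $\intG \subseteq \mathrm{GL}(\intV)$ and hence is $\intG$-invariant; in particular it is invariant under $\mu'$, which factors through $\intG \otimes W(k(x))$. Since $t$ sends $s \otimes 1$ to $\sdr{_{,\widetilde{x}}}$ and $\mu'_t = t\mu't^{-1}$, we obtain $\mu'_t(z)\,\sdr{_{,\widetilde{x}}} = t\,\mu'(z)(s\otimes 1) = t(s\otimes 1) = \sdr{_{,\widetilde{x}}}$, so $\sdr{_{,\widetilde{x}}}$ is $\mu'_t$-invariant and lies in $(\V_{\widetilde{x}}{^\otimes})^0$. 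The main technical obstacle is in the second paragraph: setting up the factorization $\varphi = \psi_\otimes \circ \mu'_t(p)^{-1}$ cleanly and verifying that $\psi_\otimes$ stays integral through dualization. Once Lemma~\ref{isomorphism} is invoked to supply the integral iso $\psi^{\mathrm{lin}}$, the remainder is routine bookkeeping with weights.
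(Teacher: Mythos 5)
Your proof is correct and follows essentially the same strategy as the paper's: both reduce the first assertion to Lemma~\ref{isomorphism} and the observation that on each $\mu'_t$-weight (equivalently, filtration-degree) graded piece of $\V_{\widetilde{x}}{^\otimes}$, the Frobenius $\varphi$ is a $p$-power multiple of an integral $\sigma$-linear map, with the $p$-power a non-negative one precisely on $\Fil^0$; and both reduce the second assertion to $\intG$-invariance of $s$ transported through $t$. Your packaging of this into the single factorization $\varphi = \psi_\otimes \circ \mu'_t(p)^{-1}$ is a tidier formulation than the paper's term-by-term check on monomial pieces $((\V_{\widetilde{x}})^0)^{\otimes a}\otimes((\V_{\widetilde{x}})^1)^{\otimes b}\otimes ((\V_{\widetilde{x}}{^\vee})^{-1})^{\otimes c}\otimes((\V_{\widetilde{x}}{^\vee})^0)^{\otimes d}$, and you are right to flag that extending $\psi$ through dualization is exactly where the isomorphism content of Lemma~\ref{isomorphism} (rather than mere integrality of $\psi$) gets used; the mathematics is the same. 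One cosmetic remark: you are also more careful than the paper's own wording about the sign of the $\mu'_t$-weight on $\Fil^1$ (the paper's proof labels the graded pieces by filtration degree and calls that ``weight,'' which is the negative of the cocharacter weight in the normalization of Proposition~\ref{cochamu}), but this does not affect the validity of either argument since $\Fil^0(\V_{\widetilde{x}}^\otimes)$ and $(\V_{\widetilde{x}}^\otimes)^0$ are unambiguously defined.
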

\begin{proof}
We use notations from \ref{remakKis}. To see the first statement,
note that we have
$$\Fil^0(\V_{\widetilde{x}}{^\otimes})=\oplus_{i\geq
0}(\V_{\widetilde{x}}{^\otimes})^i$$ where
$(\V_{\widetilde{x}}{^\otimes})^i$ is the submodule whose elements
are of weight $i$ with respect to the cocharacter $\mu'_t$. And
elements in $(\V_{\widetilde{x}}{^\otimes})^i$ are images of sums of
elements from
$$((\V_{\widetilde{x}})^0)^{\otimes a}\otimes((\V_{\widetilde{x}})^1)^{\otimes b}\otimes
((\V_{\widetilde{x}}{^\vee})^{-1})^{\otimes c}
\otimes((\V_{\widetilde{x}}{^\vee})^0)^{\otimes d}$$ such that $b-c=i$.
The $\sigma$-linear map $\varphi$ induces well defined
$\sigma$-linear maps
$$\varphi|_{(\V_{\widetilde{x}})^0}:(\V_{\widetilde{x}})^0\rightarrow
\V_{\widetilde{x}}\ \ \ \text{and}\ \ \
{}^{\vee}\!\varphi|_{(\V_{\widetilde{x}}{^\vee})^0}:((\V_{\widetilde{x}}{^\vee})^0\rightarrow
\V_{\widetilde{x}}{^\vee}.$$ But
$$(\varphi|_{(\V_{\widetilde{x}})^1})^{\otimes b}\otimes
({}^{\vee}\!\varphi|_{(\V_{\widetilde{x}}{^\vee})^{-1}})^{\otimes
c}: ((\V_{\widetilde{x}})^1)^{\otimes
b}\otimes((\V_{\widetilde{x}}{^\vee})^{-1})^{\otimes c}\rightarrow
(\V_{\widetilde{x}})^{\otimes
b}\otimes(\V_{\widetilde{x}}{^\vee})^{\otimes c}$$ is also
defined, as $$(\varphi|_{(\V_{\widetilde{x}})^1})^{\otimes
b}\otimes
({}^{\vee}\!\varphi|_{(\V_{\widetilde{x}}{^\vee})^{-1}})^{\otimes
c}=p^{b-c}\cdot(\frac{\varphi}{p}|_{(\V_{\widetilde{x}})^1})^{\otimes
b}\otimes
(p\cdot{}^{\vee}\!\varphi|_{(\V_{\widetilde{x}}{^\vee})^{-1}})^{\otimes
c},$$ while $\frac{\varphi}{p}|_{(\V_{\widetilde{x}})^1}$ and
$p\cdot{}^{\vee}\!\varphi|_{(\V_{\widetilde{x}}{^\vee})^{-1}}$ are
well-defined. So $\varphi$ is defined on
$\Fil^0(\V_{\widetilde{x}}{^\otimes})$.

To see that $\sdr{_{,\widetilde{x}}}\in
(\V_{\widetilde{x}}{^\otimes})^0$, one only needs to use the fact
that $s\in \intV^\otimes$ is $\intG$-invariant, and hence
$\sdr{_{,\widetilde{x}}}$ is also $\intG$-invariant via $t$. In
particular, it is of weight 0 with respect to the
cocharacter~$\mu_t'$.
\end{proof}
\subsubsection[]{Constructing some torsors over $W(k(x))$}\label{ptconst}Now we will show that using the
Frobenius $\varphi$ and the splitting induced by $\mu_t'$ (see
\ref{remakKis} for the definition of $\mu'$ and $\mu'_t$), we can
get an element $g_t$ of $\intG(W(k(x)))$.
\begin{construction}\label{import cons}
Let $\sigma:W(k(x))\rightarrow W(k(x))$ be as in
\ref{startconstr}, and $\xi$ be the $W(k(x))$-linear isomorphism
$\intV^\vee\otimes W(k(x))\rightarrow \big(\intV^\vee\otimes
W(k(x))\big)^{(\sigma)}$ given by $v\otimes w\mapsto v\otimes
1\otimes w$ and $t^{(\sigma)}$ be the pull back of
$$t\in \Isom_{W(k(x))}\big((\intV^\vee\otimes
W(k(x)), s\otimes 1), (\V_{\widetilde{x}},
s_{\dr,\widetilde{x}})\big)(W(k(x)))$$ via $\sigma$. Let
$\xi_t=t^{(\sigma)}\circ \xi$, and $g$ be the $W(k(x))$-linear map

$$\xymatrix{\V_{\widetilde{x}}{^{(\sigma)}}=(\V_{\widetilde{x}})^{0(\sigma)}\oplus(\V_{\widetilde{x}})^{1(\sigma)}\ar[rrrr]^(.65)
{\lin|_{(\V_{\widetilde{x}})^{0(\sigma)}}+(\frac{\varphi}{p})^{\mathrm{lin}}|_{(\V_{\widetilde{x}})^{1(\sigma)}}}
&&&&\V_{\widetilde{x}}}.$$ We define $g_t$ to be the composition
$t^{-1}\circ g\circ \xi_t$, and $(\V_{\widetilde{x}})_0$ (resp.
$(\V_{\widetilde{x}})_1$) to be the sub $W(k(x))$-module of
$\V_{\widetilde{x}}$ generated by
$\varphi((\V_{\widetilde{x}})^0)$ (resp.
$\frac{\varphi}{p}((\V_{\widetilde{x}})^1)$).
\end{construction}
We have the following
\begin{proposition}\label{gt1}

\

1) The linear map $g_t$ is an element of $\intG(W(k(x)))$.

2) The splitting $$\intV^\vee\otimes
W(k(x))=t^{-1}\big((\V_{\widetilde{x}})_0\big)\oplus
t^{-1}\big((\V_{\widetilde{x}})_1\big)$$ is induced by the
cocharacter $\nu=g_t\mu'^{(\sigma)}g_t^{-1}$ of $\intG\otimes
W(k(x))$, i.e. $t^{-1}\big((\V_{\widetilde{x}})_i\big)$ is of weight $i$ with respect to $\nu$.
\end{proposition}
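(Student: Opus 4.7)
The plan is to handle the two claims separately. For (1), since Lemma \ref{isomorphism} makes $g$ an isomorphism and $t,t^{(\sigma)},\xi$ are all isomorphisms by construction, $g_t$ automatically lies in $\mathrm{GL}(\intV\otimes W(k(x)))$. Because $\intG$ is defined inside $\mathrm{GL}(\intV)$ as the stabilizer of the tensor $s$, showing $g_t\in\intG(W(k(x)))$ reduces to showing that $g_t$ fixes $s\otimes 1$. Unwinding the definition $g_t=t^{-1}\circ g\circ t^{(\sigma)}\circ\xi$, and using that $t$ (resp.\ $t^{(\sigma)}$) takes $s$ (resp.\ $s\otimes 1$) to $\sdr{_{,\widetilde{x}}}$ (resp.\ to its $\sigma$-pullback), this is in turn equivalent to showing that the tensor-product extension of $g$ sends the $\sigma$-pullback of $\sdr{_{,\widetilde{x}}}$ to $\sdr{_{,\widetilde{x}}}$.

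The crucial observation is that on the weight-$0$ part of the tensor algebra, the extension of $g$ literally coincides with the tensor Frobenius $\varphi$ on $(\V_{\widetilde{x}}[1/p])^\otimes$. Indeed, by the description in Proposition \ref{split0}, a pure tensor of weight $0$ has equal numbers $b$ and $c$ of factors from $\V^1$ and $(\V^\vee)^{-1}$ respectively; the powers of $p$ introduced by replacing $\varphi$ with $\varphi/p$ on the $\V^1$-factors exactly cancel the powers of $p$ appearing in extending ${}^\vee\!\varphi$ to $(\V^\vee)^{-1}$. Combining this with the two conclusions of Proposition \ref{split0}---that $\sdr{_{,\widetilde{x}}}$ lies in $(\V_{\widetilde{x}}^\otimes)^0$ and is $\varphi$-invariant---one obtains $g(\sigma\text{-pullback of }\sdr{_{,\widetilde{x}}})=\varphi(\sdr{_{,\widetilde{x}}})=\sdr{_{,\widetilde{x}}}$, hence $g_t\in\intG(W(k(x)))$.

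For (2), I trace the splitting $t^{-1}((\V_{\widetilde{x}})_0)\oplus t^{-1}((\V_{\widetilde{x}})_1)$ backward through $g_t=t^{-1}\circ g\circ t^{(\sigma)}\circ\xi$. By construction $(\V_{\widetilde{x}})_i=g((\V_{\widetilde{x}}^i)^{(\sigma)})$; next, $t^{(\sigma)}$ is the $\sigma$-base change of $t$, so it identifies $(\V_{\widetilde{x}}^i)^{(\sigma)}$ with $(M^i)^{(\sigma)}$, where $M^i:=t^{-1}(\V_{\widetilde{x}}^i)$ is the $\mu'$-weight $i$ subspace of $M:=\intV\otimes W(k(x))$. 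It remains to show that $\xi^{-1}((M^i)^{(\sigma)})$ is the $\mu'^{(\sigma)}$-weight $i$ subspace of $M$. Since $\xi$ is the canonical $W(k(x))$-linear isomorphism arising from the $\mathbb{Z}_p$-structure on $\intV$, this is a direct matrix computation: in a $\mathbb{Z}_p$-basis of $\intV$, if $\mu'$ is represented by the matrix $A(t)$ with entries in $W(k(x))[t,t^{-1}]$, then $\mu'^{(\sigma)}$ is represented by $\sigma(A)(t)$, and this is precisely the matrix describing the base-change action of $\mu'$ on $M^{(\sigma)}$ once the latter is identified with $M$ through $\xi$. Assembling the three steps, $g_t$ sends the $\mu'^{(\sigma)}$-weight decomposition of $M$ to the claimed splitting, which is therefore induced by $\nu=g_t\mu'^{(\sigma)}g_t^{-1}$.

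The principal obstacle is the careful bookkeeping with Frobenius twists: distinguishing the $\sigma$-semi-linear map $m\mapsto m\otimes 1$ from the $W(k(x))$-linear isomorphism $\xi$ (which agree on $\intV$ but differ elsewhere), and relating the cocharacter $\mu'^{(\sigma)}$ acting on $M$ through $\intG$ with the base-change action of $\mu'$ on $M^{(\sigma)}$. Once these identifications are pinned down, (2) is a bookkeeping exercise and (1) follows from the weight-$0$ coincidence between $g$ and $\varphi$ together with Proposition \ref{split0}.
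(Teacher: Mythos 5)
Your proposal is correct and matches the paper's proof in all essentials: part (1) is established in both by reducing to $g_t(s\otimes 1)=s\otimes 1$, using Proposition \ref{split0} to place $\sdr_{,\widetilde{x}}$ in weight $0$ and observing that $g^\otimes$ agrees with $\varphi^{\mathrm{lin}}$ there, while part (2) in both amounts to pushing the $\mu'$-weight decomposition through $\xi$, $t^{(\sigma)}$, and $g$ (the paper packages this as a commutative diagram). The only tiny slip is attributing the $\varphi$-invariance of $\sdr_{,\widetilde{x}}$ to Proposition \ref{split0}; that fact is quoted earlier in \ref{remakKis} from Kisin, whereas Proposition \ref{split0} only gives the weight-$0$ membership and integrality of $\varphi$ on $\mathrm{Fil}^0$.
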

\begin{proof}
By Lemma \ref{isomorphism}, $g_t\in \mathrm{GL}(\intV^\vee)(W(k(x)))$.
So, to prove 1), it suffices to check that the induced map
$g_t:\intV^\otimes\otimes W(k(x)) \rightarrow
\intV^\otimes\otimes W(k(x))$ maps $s\otimes 1$ to
itself. Now we compute $g_t(s\otimes 1)$. First, $\xi(s\otimes
1)=s\otimes 1\otimes 1$ and $t^{(\sigma)}(s\otimes 1\otimes
1)=\sdr{_{,\widetilde{x}}}\otimes 1$. We decompose
$\V_{\widetilde{x}}{^\otimes}=\oplus_i(\V_{\widetilde{x}}{^\otimes})^i$
via the weights of the cocharacter $\mu_t'$ introduced before.
Then
$(\V_{\widetilde{x}}{^\otimes})^{(\sigma)}=\oplus_i\big((\V_{\widetilde{x}}{^\otimes})^i\big)^{(\sigma)}$.
Note that
$\sdr{_{,\widetilde{x}}}\in(\V_{\widetilde{x}}{^\otimes})^0$ by
Proposition \ref{split0}, so
$$g^\otimes=\sum_ip^{-i}(\lin){^\otimes}|_{(\V_{\widetilde{x}}{^\otimes})^i}:\bigoplus_i
\big((\V_{\widetilde{x}}{^\otimes})^i\big)^{(\sigma)}\rightarrow
\V_{\widetilde{x}}{^\otimes}$$ maps
$\sdr{_{,\widetilde{x}}}\otimes 1$ to $\sdr{_{,\widetilde{x}}}$,
as it is $\varphi$-invariant. And hence
$$g_t(s\otimes1)=t^{-1}\circ g\circ \xi_t(s\otimes 1)=s\otimes
1,$$ as $t^{-1}$ takes $\sdr{_{,\widetilde{x}}}$ to $s\otimes 1$.
This proves 1).

For 2), we look at the commutative diagram
$$\xymatrix@C=1.4cm{\intV^\vee\otimes W(k(x))\ar[r]^(.6){t^{(\sigma)}\circ
\xi}\ar[dr]^{g_t} &(\V_{\widetilde{x}})^{(\sigma)}
\ar@{=}[r]&(\oplus_i\V_{\widetilde{x}})^{i(\sigma)}\ar[d]^{\Sigma_ip^{-i}\varphi^{\mathrm{lin}}|_{(\V_{\widetilde{x}})^{i(\sigma)}}}\\
 &\intV^\vee\otimes W(k(x))&\V_{\widetilde{x}}\ar[l]_(.3){t^{-1}}.}$$
It shows directly that $$\nu(m)(v)=g_t\sigma(\mu'(m)) g_t^{-1}(v),
\ \ \forall\ m\in \mathbb{G}_m(W(k(x))), \forall\ v\in
\intV^\vee\otimes W(k(x)).$$

%first note that, for $h\in \mathrm{GL}(\intV\otimes
%W(k(x)))$, if we denote by $h^{(\sigma)}$ the isomorphism
%$\intV\otimes W(k(x))\otimes_\sigma\otimes W(k(x))\rightarrow
%\intV\otimes W(k(x))\otimes_\sigma\otimes W(k(x))$ given by
%$h\otimes 1$, then $\sigma(h)=\xi^{-1}h^{(\sigma)}\xi$. So for
%$i=0,1$ and $v_i\in t^{-1}\big((\V_{\widetilde{x}})_i\big)$, we have
%$$g_t\sigma(\mu'(m)) g_t^{-1}(v_i)=t^{-1}\circ g\circ
%(t\mu'(m)t^{-1})^{(\sigma)}\circ g^{-1}\circ t(v_i)=m^iv_i.$$ Note
%that $\intG\otimes W(k(x))$ is defined over $\mathbb{Z}_p$, so
%$\sigma(\mu'(m))$ is an element of $\intG\otimes W(k(x))$, and hence
%$\nu$ is a cocharacter of $\intG\otimes W(k(x))$. Note that the
%cocharacter $g_t^{-1}\nu g_t$ is the pull back of $\mu'$ via
%$\sigma$, and hence will be denoted by $\mu'^{(\sigma)}$ from now
%on.
\end{proof}
\begin{corollary}\label{ptI+-}
Let $\mu:\mathbb{G}_{m,W}\rightarrow \intG\otimes W$ be the
cocharacter as in Notations \ref{notation1}. Let $C^\bullet$ be
the descending filtration on $\intV^\vee\otimes W$ such that $C^i$ is the sub-module of elements of weights $\geq i$ with respect to $\mu$, and $D_\bullet$ be
the ascending filtration on $\intV^\vee\otimes W$ such that $D_i$ is the sub-module of elements of weights $\leq i$ with respect to $\mu^{(\sigma)}$.

Let $P_+$ (resp. $P_-$) be the stabilizer in $\intG\otimes W$ of $C^\bullet$ (resp. $D_\bullet$), and $I_{\widetilde{x}}$ be
$\Isom_{W(k(x))}\big((\intV^\vee\otimes W(k(x)), s\otimes 1),
(\V_{\widetilde{x}}, s_{\dr,\widetilde{x}})\big)$. Then

1) The closed subscheme
$$I_{\widetilde{x},+}:=\Isom_{W(k(x))}\Big((\intV^\vee\otimes
W(k(x)), s\otimes 1, C^\bullet), \big(\V_{\widetilde{x}},
s_{\dr,\widetilde{x}}, \V_{\widetilde{x}}\supseteq
(\V_{\widetilde{x}})^1\big)\Big)\subseteq I_{\widetilde{x}}$$ is a
trivial $P_+$-torsor.

2) The closed subscheme
$$I_{\widetilde{x},-}:=\Isom_{W(k(x))}\Big((\intV^\vee\otimes
W(k(x)), s\otimes 1, D_\bullet), \big(\V_{\widetilde{x}},
s_{\dr,\widetilde{x}}, (\V_{\widetilde{x}})_0\subseteq
\V_{\widetilde{x}}\big)\Big)\subseteq I_{\widetilde{x}}$$ is a
trivial $P_-^{(\sigma)}$-torsor.
\end{corollary}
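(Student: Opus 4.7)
The plan is to exhibit explicit sections of $I_{\widetilde{x},+}$ and of $I_{\widetilde{x},-}$; once we have a section, the closed subscheme condition of preserving a given filtration inside the trivial $\intG \otimes W(k(x))$-torsor $I_{\widetilde{x}}$ cuts out precisely the right-translate of that section by the parabolic stabilizer of the corresponding filtration on $\intV \otimes W(k(x))$, namely $P_+$, respectively $P_-^{(\sigma)}$. Our starting point is the trivialization $t \in I_{\widetilde{x}}(W(k(x)))$ guaranteed by Theorem \ref{kisin}(3.a), which we will modify by a suitable element of $\intG(W(k(x)))$.

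The key step, and the main obstacle, is to show that $\mu$ and $\mu'$ are conjugate by an element $h \in \intG(W(k(x)))$. Both cocharacters factor through the connected component $C \cong \Spec(O_{E,(p)})$ of $\inttG \backslash Z$: for $\mu$ by construction (Notations \ref{notation1}), and for $\mu'$ by \ref{remakKis} (the Hodge filtration being induced by a cocharacter $\intG$-conjugate to $\mu$); both correspond to the same $W(k(x))$-point of $C$, namely the one induced by $\Spec(W(k(x))) \to \Spec(W) \to \Spec(O_{E,(p)})$ via the chosen place $v$. The transporter
\[
T \;:=\; \{\, h \in \intG \otimes W(k(x)) \,:\, h\mu h^{-1} = \mu' \,\}
\]
is a closed subscheme of $\intG \otimes W(k(x))$ carrying a right action of the centralizer $L = L_\mu$. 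By Remark \ref{cocharemark}, the geometric fiber of $C$ in $Z$ over $\overline{k(x)}$ is a single $\intG(\overline{k(x)})$-orbit, whence $T \otimes \overline{k(x)} \neq \emptyset$ and $T$ is a right $L$-torsor, in particular smooth over $W(k(x))$. Since $L \otimes k(x)$ is connected reductive over the finite field $k(x)$, Lang's theorem trivializes $T \otimes k(x)$, and the resulting $k(x)$-point lifts to $T(W(k(x)))$ by smoothness of $T$ and the Henselian property of $W(k(x))$. Fix such an $h$.

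For 1), the composition $t \circ h : \intV \otimes W(k(x)) \to \V_{\widetilde{x}}$ sends $s \otimes 1$ to $s_{\dr,\widetilde{x}}$ since $h$ fixes $s \otimes 1$, and carries $C^\bullet$ first to the descending filtration induced by $h\mu h^{-1} = \mu'$ on $\intV \otimes W(k(x))$, then via $t$ to the filtration induced by $t\mu' t^{-1} = \mu'_t$ on $\V_{\widetilde{x}}$, which is the Hodge filtration. Hence $t \circ h$ is a section of $I_{\widetilde{x},+}$, and since the stabilizer of $s \otimes 1$ and $C^\bullet$ in $\intG \otimes W(k(x))$ is $P_+$, the subscheme $I_{\widetilde{x},+}$ is the $P_+$-orbit of $t \circ h$, hence a trivial right $P_+$-torsor.

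For 2), apply $\sigma$ to $h\mu h^{-1} = \mu'$ to get $h^{(\sigma)} \mu^{(\sigma)} (h^{(\sigma)})^{-1} = \mu'^{(\sigma)}$, then conjugate by $g_t \in \intG(W(k(x)))$ from Proposition \ref{gt1}:
\[
(g_t h^{(\sigma)})\, \mu^{(\sigma)}\, (g_t h^{(\sigma)})^{-1} \;=\; g_t \mu'^{(\sigma)} g_t^{-1} \;=\; \nu.
\]
Consequently $t \circ (g_t h^{(\sigma)})$ sends $s \otimes 1$ to $s_{\dr,\widetilde{x}}$ and carries $D_\bullet$ to the ascending filtration on $\V_{\widetilde{x}}$ induced by $\nu$, which by Proposition \ref{gt1}(2) is precisely $(\V_{\widetilde{x}})_0 \subseteq \V_{\widetilde{x}}$. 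This provides a section of $I_{\widetilde{x},-}$; as the stabilizer of $D_\bullet$ in $\intG \otimes W(k(x))$ is $P_-^{(\sigma)}$, the same orbit argument shows that $I_{\widetilde{x},-}$ is a trivial right $P_-^{(\sigma)}$-torsor.
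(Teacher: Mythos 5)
Your proof is correct and follows the same overall strategy as the paper's: produce an element $h \in \intG(W(k(x)))$ with $h\mu h^{-1} = \mu'$, and then observe that $t \circ h$ and $t \circ g_t h^{(\sigma)}$ furnish explicit sections of $I_{\widetilde{x},+}$ and $I_{\widetilde{x},-}$, so each is a trivial torsor under the stabilizer of the corresponding filtration on $\intV\otimes W(k(x))$.

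The paper's own proof is a two-line affair that begins ``take a $g_1 \in \intG(W(k(x)))$ s.t.\ $g_1(\mu\otimes W(k(x)))g_1^{-1} = \mu'$'' and then writes down the cosets, without justifying that such a $g_1$ exists. Your real contribution is to supply this missing step: you observe that the transporter $T$ from $\mu$ to $\mu'$ is a pseudo-torsor under the Levi $L$, non-empty geometrically because both cocharacters lie over the same $W(k(x))$-point of the component $C$ (cf.\ \ref{remakKis}), and then you trivialize its special fiber by Lang's theorem and lift by smoothness of $T$ and Henselianity of $W(k(x))$. This is exactly the mechanism the paper itself uses in Proposition \ref{cochamu} to prove $Z^0(W)\neq\emptyset$, transposed to the transporter, and it is the right way to fill the gap the published proof leaves to the reader. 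One further small observation: the coset you obtain for part 1), namely $t\circ h \cdot P_+$, is the correct one; the paper's displayed formula $t\cdot g_1(P_+\otimes W(k(x)))g_1^{-1}$ appears to be a slip, as only your version is consistent with the later identification $(I_x, I_{x,+}, I_{x,-}, \iota)\cong \underline{I}_{g_1^{-1}g_t g_1^{(p)}}$ at the end of \ref{ptg-zip}.
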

\begin{proof}
To prove 1), take a $g_1\in \intG(W(k(x)))$ such that
$g_1\big(\mu\otimes W(k(x))\big) g_1^{-1}=\mu'$, then we have
$I_{\widetilde{x},+}= t\cdot g_1\big(P_+\otimes W(k(x))\big)$.

For 2), by Proposition \ref{gt1}, $I_{\widetilde{x},-}= t\cdot g_t
g_1^{(\sigma)}\big(P_-\otimes W(k(x))\big)^{(\sigma)}$.
\end{proof}

\subsubsection{The $G$-zip attached to a
filtered $F$-crystal}\label{ptg-zip}
Notations as above, let $L$ be the centralizer of $\mu$. Let $\overline{\mu}$, $\overline{G}$, $\overline{P_+}$, $\overline{P_-}$ and $\overline{L}$ be the
reduction modulo $p$ of $\mu$, $\intG$, $P_+$, $P_-$ and $L$ respectively. For simplicity, we still write $\xi$, $C^\bullet$ and $D_\bullet$ for their reductions. The map $\xi$ induces isomorphisms
$$\phi_0:(\intV^\vee\otimes \kappa)^{(p)}/((\intV^\vee\otimes
\kappa)^1)^{(p)}\stackrel{\mathrm{pr}_2}{\rightarrow}((\intV^\vee\otimes
\kappa)^0)^{(p)} \stackrel{\xi^{-1}}{\longrightarrow}(\intV^\vee\otimes
\kappa)_0$$ and
$$\phi_1:((\intV^\vee\otimes
\kappa)^1)^{(p)}\stackrel{\xi^{-1}}{\longrightarrow}(\intV^\vee\otimes
\kappa)_1\simeq(\intV^\vee\otimes \kappa)/(\intV^\vee\otimes \kappa)_0,$$
and hence induces $\sigma$-linear maps $\varphi_0',\ \varphi_1'$
after pre-composing the natural map $\intV^\vee\otimes
\kappa\rightarrow (\intV^\vee\otimes \kappa)^{(p)}$. The tuple
$(\intV^\vee\otimes \kappa, C^\bullet,
D_\bullet, \varphi_\bullet')$ is an $F$-zip.
The $\overline{G}$-zip associated to $(\overline{G}
,\overline{\mu})$ is isomorphic to $\underline{I}_{\mathrm{id}}$
(here we use notations as at the end of Section 1.2).

To get a $\overline{G}$-zip from $\V_{\widetilde{x}}$, one needs
to ``compare'' the above $F$-zip and the one coming from $\V_{\widetilde{x}}$. Let
$\varphi_0:\V_x/(\V_x)^1\rightarrow (\V_x)_0$ be the reduction mod
$p$ of $\varphi|_{(\V_{\widetilde{x}})^0}$, and
$$\varphi_1:(\V_x)^1\rightarrow (\V_x)_1\cong \V_x/(\V_x)_0$$ be the
reduction mod $p$ of
$\frac{\varphi}{p}|_{(\V_{\widetilde{x}})^1}$. Let $I_{x,+}$,
$I_{x,-}$ and $I_x$ be the reduction mod $p$ of
$I_{\widetilde{x},+}$, $I_{\widetilde{x},-}$ and
$I_{\widetilde{x}}$ respectively. For simplicity, we still write
$t$, $g_1$, $g_t$ for their reductions. By the proof of Corollary
\ref{ptI+-}, $I_{x,+}$ and $I_{x,-}$ are
$\overline{P_{+}}_{,k(x)}$-torsor and
$\overline{P_{-}}_{,k(x)}^{(p)}$-torsor respectively. For any
$k(x)$-algebra $R$, an element $\beta\in I_{x,+}(R)$  is an
isomorphism
$$\beta:\big(\intV^\vee\otimes k(x), \overline{s}\otimes 1,C^\bullet
(\intV^\vee\otimes k(x))\big)\otimes R\rightarrow
\big(\V_x,\rsdr{_{,x}},C^\bullet(\V_x)\big)\otimes R.$$ It induces
an isomorphism
$$\oplus\gr_{C}^i((\intV^\vee\otimes
k(x))^{(p)}\otimes
R)\stackrel{\sim}{\rightarrow}\oplus\gr_{C}^i(\V_{x}{^{(p)}}\otimes
R)$$ which will still be denoted by $\beta^{(p)}$. But then
$\beta^{(p)}$ is an element in $(I_{x,+}^{(p)}/U_+^{(p)})(R)$, and
any element of $(I_{x,+}^{(p)}/U_+^{(p)})(R)$ is of this form (by \cite{SGA3} XXVI Corollary 2.2, as $U_+$ is unipotent).

Let $\iota: I_{x,+}^{(p)}/U_+^{(p)}\rightarrow I_{x,-}/U_-^{(p)}$
be the morphism taking $\beta^{(p)}$ to
$$\xymatrix@C=1.2cm{
\oplus\gr^{D}_i((\intV^\vee\otimes k(x))\otimes
R)\ar[d]^{(\phi_0^{-1}\oplus
\phi_1^{-1})\otimes 1}& \oplus\gr^{D}_i(\V_{x}\otimes R)\\
\oplus\gr_{C}^i((\intV^\vee\otimes k(x))^{(p)}\otimes
R)\ar[r]^(.56){\beta^{(p)}}&\oplus\gr_{C}^i(\V_{x}{^{(p)}}\otimes
R).\ar[u]^{\varphi^{\mathrm{lin}}_\bullet\otimes 1}}$$ We claim
that $\iota$ is an isomorphism of $\overline{L}^{(p)}$-torsors.
First note that $$\phi_0^{-1}\oplus
\phi_1^{-1}:\oplus\gr^{D}_i(\intV^\vee\otimes
k(x))\rightarrow\oplus\gr_{C}^i((\intV^\vee\otimes k(x))^{(p)})$$ and
$\varphi^{\mathrm{lin}}_\bullet:\oplus\gr_{C}^i(\V_{x}{^{(p)}})\rightarrow\oplus\gr^{D}_i(\V_{x})$
are isomorphisms, and so are their base changes to $R$. This
implies that $\iota$ is an isomorphism. We only need to show that
$\iota$ is $L^{(p)}$-equivariant. But this follows from the fact
that $\phi_0^{-1}\oplus \phi_1^{-1}$ is $L^{(p)}$-equivariant. So
the tuple $(I_x, I_{x,+}, I_{x,-}, \iota)$ is a $\overline{G}$-zip
of type $\overline{\mu}$ over $k(x)$.
\begin{remark}\label{pt I sub g}
One can describe $(I_x, I_{x,+}, I_{x,-}, \iota)$ explicitly. We
have $\beta=tg_1p$ for some $q\in \overline{P_+}_{k(x)}(R)$ and
$$\iota(\beta^{(p)})=\varphi^{\mathrm{lin}}_\bullet \circ t^{(p)}g_1^{(p)}q^{(p)}\circ(\phi_0^{-1}\oplus \phi_1^{-1})=tg_tg_1^{(p)}q^{(p)}.$$
Using notations and constructions in the discussion after Theorem
\ref{mainthofGzip}, we have $$(I_x, I_{x,+}, I_{x,-}, \iota)\cong
\underline{I}_{q^{-1}g_1^{-1}g_tg_1^{(p)}q^{(p)}}\cong
\underline{I}_{g_1^{-1}g_tg_1^{(p)}}.$$If we replace $t$ by
$tg_1$, then $(I_x, I_{x,+}, I_{x,-}, \iota)\cong
\underline{I}_{g_t}.$
\end{remark}

\subsection[Construction of the $G$-zip over a complete local ring]
{Construction of the $G$-zip over a complete local ring}

We want to globalize the above point-wise results to $\ES_0$.  But
to do so, we need first to work at completions of stalks at closed
points. And to study the $\overline{G}$-zip structure at the complete local
rings, we need Faltings's deformation theory. For simplicity, we
assume that $t$ is such that $t\mu t^{-1}$ induces the Hodge
filtration.
\subsubsection[]{Faltings's deformation theory and complete local rings of the integral model}\label{prepforbackI+}

Now we will describe Faltings's deformation theory for
$p$-divisible groups following \cite{modelmonen} 4.5 and its
relation with Shimura varieties following \cite{CIMK} 1.5, 2.3.

Let $k$ be a perfect field of characteristic $p$, and $W(k)$ be
the ring of Witt vectors. Let $H$ be a $p$-divisible group over
$W(k)$ with special fiber $H_0$. The formal deformation functor
for $H_0$ is represented by a ring $R$ of formal power series over
$W(k)$. More precisely, let $(M_0,M_0^1, \varphi_0)$ be the
filtered Dieudonn\'{e} module associated to $H$, and $L$ be a Levi
subgroup of $P=\mathrm{stab}(M_0\supseteq M_0^1)$. Let $U$ be the
opposite unipotent of $P$, then $R$ is isomorphic to the
completion at the identity section of $U$. Let $u$ be the
universal element in $U(R)$, and $\sigma:R\rightarrow R$ be the
homomorphism which is the Frobenius on $W(k)$ and $p$-th power on
variables, then the filtered Dieudonn\'{e} module of the universal
$p$-divisible group over $R$ is the tuple
$(M,M^1,\varphi,\nabla)$, where $M=M_0\otimes R$,
$M^1=M_0^1\otimes R$, $\varphi=u\cdot (\varphi_0\otimes \sigma)$,
and $\nabla$ is an integrable connection which we don't want to
specify, but just refer to \cite{modelmonen} Chapter 4.

More generally, let $G\subseteq \mathrm{GL}(M)$ be a reductive
group defined by a tensor $s\in \mathrm{Fil}^0(M^\otimes)\subseteq
M^\otimes$ which is $\varphi_0$-invariant. Assume that the
filtration $M_0\supseteq M_0^1$ is induced by a cocharacter $\mu$
of $G$. Let $R_G$ be the completion along the identity section of
the opposite unipotent of the parabolic subgroup
$P_G=\mathrm{stab}_G(M_0\supseteq M_0^1)$ of $G$, and $u_G$ be the
universal element in $U_G(R_G)$ which is also the pull back to
$R_G$ of $u$. Then $R_G$ parametrizes deformations of $H$ such that the
horizontal continuation of $s$ remains a Tate tensor (see
\cite{modelmonen} Proposition 4.9).

For any closed point $x\in \ES_0$, let $O_{\ES_0,x}^{\ \widehat{}
}$ and $O_{\ES,x}^{\ \widehat{} }$ be the completions of
$O_{\ES_0,x}$ and $O_{\ES,x}$ with respect to the maximal ideals
defining $x$ respectively. Clearly $O_{\ES,x}^{\ \widehat{}
}/pO_{\ES,x}^{\ \widehat{} }=O_{\ES_0,x}^{\ \widehat{}{\ \ \ }}$.
Let $\widetilde{x}$ be a $W(k(x))$-point of $\ES$ lifting $x$, and
$\mu'$ be a cocharacter of $\intG\otimes W(k(x))$ as in the proof
of Proposition \ref{gt1}, which induces the Hodge filtration on
$\V_{\widetilde{x}}$ via $t$ as introduced in Theorem \ref{kisin}
3.b). Let $R_G$ be as above and $\sigma: R_G\rightarrow R_G$ be
the morphism which is Frobenius on $W(k(x))$ and $p$-th power on
variables. We will simply write $u$ for $u_G$. Then by the proof
of \cite{CIMK} Proposition 2.3.5, the $p$-divisible group
$\mathcal {A}[p^\infty]\big|_{O_{\ES,x}^{\ \widehat{} }}$ gives a
formal deformation of $\mathcal {A}[p^\infty]\big|_x$, and induces
an isomorphism $R_G\rightarrow O_{\ES,x}^{\ \widehat{} }$.
Moreover, if we take the Frobenius on $O_{\ES,x}^{\ \widehat{} }$
to be the one on $R_G$, then the Dieudonn\'{e} module of $\mathcal
{A}[p^\infty]\big|_{O_{\ES,x}^{\ \widehat{} }}$ is of the form
$\big(\V_{\widetilde{x}}\otimes O_{\ES,x}^{\ \widehat{} },
(\V_{\widetilde{x}})^1\otimes O_{\ES,x}^{\ \widehat{} }, \varphi,
\nabla\big)$, where $\varphi$ is the composition
$$\V_{\widetilde{x}}\otimes O_{\ES,x}^{\ \widehat{} }\stackrel{\varphi\otimes \sigma}{\longrightarrow}
\V_{\widetilde{x}}\otimes O_{\ES,x}^{\ \widehat{}{\ \ \
}}\stackrel{u_t}{\longrightarrow} \V_{\widetilde{x}}\otimes
O_{\ES,x}^{\ \widehat{} }$$ with $u_t=tut^{-1}$, and $\nabla$ is
given by restricting the connection on the universal deformation
to the closed sub formal scheme $\mathrm{Spf}(O_{\ES,x}^{\
\widehat{}{\ \ \ }})$ (see \cite{modelmonen} 4.5). Note that by
\cite{CIMK} 1.5.4 and the proof of Corollary 2.3.9,
$\sdr{_{,\widetilde{x}}}\otimes 1=\sdr\otimes 1$ in
$\V{^\otimes}\otimes O_{\ES,x}^{\ \widehat{}{\ \ \ }}$.
\begin{lemma}\label{bacIandI+}

\

1) The scheme
$$\I:=\Isom_{\Spec(O_{\ES,x}^{\ \widehat{} })}\big((\intV^\vee\otimes W,
s)\otimes_{W}O_{\ES,x}^{\ \widehat{} },\ (\V,\sdr)\otimes
O_{\ES,x}^{\ \widehat{} }\big)$$ is a trivial $\intG$-torsor over
$O_{\ES,x}^{\ \widehat{} }$.

2) The closed subscheme $\I_+\subseteq \I$ defined by
$$\I_+=\Isom_{\Spec(O_{\ES,x}^{\ \widehat{} })}\big((\intV^\vee\otimes
W,C^\bullet, s)\otimes_{W}O_{\ES,x}^{\ \widehat{} },\
(\V,\V\supseteq(\V)^1, \sdr)\otimes O_{\ES,x}^{\ \widehat{} }
\big)$$ is a trivial $P_+$-torsor over $O_{\ES,x}^{\ \widehat{}{\
\ \ }}$.
\end{lemma}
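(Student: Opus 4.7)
The plan is to transport the trivialisations at $\widetilde{x}$ provided by Theorem \ref{kisin} 3.a) and Corollary \ref{ptI+-} to the complete local ring $O_{\ES,x}^{\ \widehat{} }$ by base change along the canonical $W$-algebra inclusion $W(k(x))\hookrightarrow O_{\ES,x}^{\ \widehat{} }$, which exists because $O_{\ES,x}^{\ \widehat{} }$ is a complete Noetherian local ring with residue field $k(x)$ containing $W$. All the substantive input is already packaged in \ref{prepforbackI+}: Kisin's isomorphism $R_G\cong O_{\ES,x}^{\ \widehat{} }$ together with the explicit shape of the universal filtered Dieudonn\'{e} module yields a canonical identification
$$\V\otimes O_{\ES,x}^{\ \widehat{} }\cong \V_{\widetilde{x}}\otimes_{W(k(x))}O_{\ES,x}^{\ \widehat{} }$$
under which $\sdr\otimes 1$ corresponds to $s_{\dr,\widetilde{x}}\otimes 1$ (horizontality of $\sdr$, cited from \cite{CIMK} 1.5.4) and the Hodge filtration corresponds to $(\V_{\widetilde{x}})^1\otimes O_{\ES,x}^{\ \widehat{} }$ (since in Faltings's universal deformation the filtration is simply $M_0^1\otimes R$).

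For part 1), I take the element $t$ of $\Isom_{W(k(x))}\big((\intV\otimes W(k(x)), s\otimes 1), (\V_{\widetilde{x}}, s_{\dr,\widetilde{x}})\big)(W(k(x)))$ supplied by Theorem \ref{kisin} 3.a), and base change it to $O_{\ES,x}^{\ \widehat{} }$ via the identification above. The horizontality statement for $\sdr$ guarantees that $t\otimes 1$ lands in $\I(O_{\ES,x}^{\ \widehat{} })$, producing a section of $\I$ and hence a trivialisation of $\I$ as a right $\intG$-torsor.

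For part 2), I use the element $g_1\in \intG(W(k(x)))$ appearing in the proof of Corollary \ref{ptI+-} 1), which satisfies $g_1(\mu\otimes W(k(x)))g_1^{-1}=\mu'$; consequently $tg_1$ lies in $I_{\widetilde{x},+}(W(k(x)))$, i.e.\ it transports $C^\bullet$ to the Hodge filtration $\V_{\widetilde{x}}\supseteq (\V_{\widetilde{x}})^1$. The identification of the Hodge filtration on $\V\otimes O_{\ES,x}^{\ \widehat{} }$ with the base change of $(\V_{\widetilde{x}})^1$ then shows that $tg_1\otimes 1$ is a section of $\I_+$, so $\I_+$ is a trivial right $P_+$-torsor.

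The only real subtlety to check is that Faltings's deformation theory genuinely identifies both the tensor $\sdr$ and the Hodge filtration on $\V\otimes O_{\ES,x}^{\ \widehat{} }$ with their base changes from $\widetilde{x}$; this is the heavy lifting done in \cite{CIMK} Proposition 2.3.5 and Corollary 2.3.9 and already summarised in \ref{prepforbackI+}. Once it is in hand, the proof of the lemma is a purely formal base-change argument.
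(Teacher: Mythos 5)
Your proof is correct and follows essentially the same path as the paper's own (very terse) argument: both rely on the canonical identification $\V\otimes O_{\ES,x}^{\ \widehat{}\ }\cong \V_{\widetilde{x}}\otimes_{W(k(x))}O_{\ES,x}^{\ \widehat{}\ }$ respecting the tensor and the Hodge filtration (coming from \ref{prepforbackI+} and \cite{CIMK} 1.5.4, 2.3.5, 2.3.9), combined with the pointwise trivialisations from Theorem \ref{kisin} 3.a) and Corollary \ref{ptI+-} 1). The only difference is that you spell out the sections $t\otimes 1$ and $tg_1\otimes 1$ explicitly, whereas the paper merely cites the two isomorphisms.
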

\begin{proof}
1) follows from $(\V,\sdr)\otimes O_{\ES,x}^{\ \widehat{}{\ \ \
}}\cong (\V_{\widetilde{x}}, \sdr{_{,\widetilde{x}}})\otimes
O_{\ES,x}^{\ \widehat{} }$ and Theorem \ref{kisin} 3.a). And 2)
follows from
$$\big(\V, \V\supseteq(\V)^{1},
\sdr\big)\otimes O_{\ES,x}^{\ \widehat{} }\cong
\big(\V_{\widetilde{x}}, \V_{\widetilde{x}}\supseteq
(\V_{\widetilde{x}})^1, \sdr{_{,\widetilde{x}}}\big)\otimes
O_{\ES,x}^{\ \widehat{} }.$$ and Corollary \ref{ptI+-} 1).
\end{proof}
Let $t$ be as in \ref{prepforbackI+}, and $\widehat{g_t}$ be the
composition of
\begin{equation*}
\begin{split}
&\xi:\intV^\vee\otimes O_{\ES,x}^{\ \widehat{} }\rightarrow
(\intV^\vee\otimes O_{\ES,x}^{\ \widehat{} })^{(\sigma)}; \ \ \
v\otimes s\mapsto v\otimes
1\otimes s,\\
&(t\otimes 1)^{(\sigma)}: (\intV^\vee\otimes W(k(x))\otimes
O_{\ES,x}^{\ \widehat{} })^{(\sigma)}\rightarrow
(\V_{\widetilde{x}}\otimes O_{\ES,x}^{\ \widehat{} })^{(\sigma)},
 \end{split}
 \end{equation*}
with $\widehat{g}$
$$\xymatrix{(\V_{\widetilde{x}}\otimes O_{\ES,x}^{\ \widehat{} }){^{(\sigma)}}=(((\V_{\widetilde{x}})^{0}
\oplus(\V_{\widetilde{x}})^1)_{O_{\ES,x}^{\ \widehat{}{\ \ \
}}})^{(\sigma)}\ar[rrrr]^(.65)
{u_t\circ(((\varphi|_{(\V_{\widetilde{x}})^0}+\frac{\varphi}{p}|_{(\V_{\widetilde{x}})^1})\otimes
\sigma)^{\mathrm{lin}})} &&&&\V_{\widetilde{x}}\otimes
O_{\ES,x}^{\ \widehat{} }}$$ and $(t\otimes1)^{-1}$. We have the
following
\begin{lemma}\label{gt2bacI-}
The $O_{\ES,x}^{\ \widehat{} }$-linear map $\widehat{g}_t$ is an
element of $\intG(O_{\ES,x}^{\ \widehat{} })$. Let
$\widehat{\V}_0$ (resp. $\widehat{\V}_1$) be the module
$\widehat{g}\big(((\V_{\widetilde{x}})^0\otimes O_{\ES,x}^{\
\widehat{}{\ \ \ }})^{(\sigma)}\big)$ (resp.
$\widehat{g}\big(((\V_{\widetilde{x}})^1\otimes O_{\ES,x}^{\
\widehat{} })^{(\sigma)}\big)$), then the scheme $\I_-$ given by
$$\Isom_{\Spec(O_{\ES,x}^{\ \widehat{} })}\big((\intV^\vee\otimes
W,D_\bullet, s)\otimes_{W}O_{\ES,x}^{\ \widehat{} },\ (\V\otimes
O_{\ES,x}^{\ \widehat{} },\widehat{\V}_0\subseteq\V\otimes
O_{\ES,x}^{\ \widehat{} }, \sdr\otimes 1)\big)$$ is a trivial
$P_-^{(\sigma)}$-torsor over $O_{\ES,x}^{\ \widehat{} }$.
\end{lemma}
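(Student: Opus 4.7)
The plan is to deduce Lemma \ref{gt2bacI-} by globalizing the point-wise analysis (Proposition \ref{gt1} and Corollary \ref{ptI+-}) to the complete local ring $O_{\ES,x}^{\ \widehat{}{\ \ \ }}$, using the input from Faltings's deformation theory recalled in \ref{prepforbackI+}. The key idea is to factor $\widehat{g}_t$ as the product of the point-wise element $g_t\in \intG(W(k(x)))$ and the universal deformation element $u\in U_G(R_G)$, both of which lie in $\intG(O_{\ES,x}^{\ \widehat{}{\ \ \ }})$.

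First I would unwind the definition of $\widehat{g}_t$. Because $\widehat{g}=u_t\circ ((\varphi|_{(\V_{\widetilde x})^0}+\tfrac{\varphi}{p}|_{(\V_{\widetilde x})^1})\otimes \sigma)^{\mathrm{lin}}$, and $u_t=tut^{-1}$, conjugation by $(t\otimes 1)^{-1}$ and $(t\otimes 1)^{(\sigma)}\circ\xi$ converts the Frobenius part into the base change of the point-wise linear map $g$ used to define $g_t=t^{-1}\circ g\circ \xi_t$. Thus $\widehat{g}_t=u\cdot g_t^{\flat}$ in $\mathrm{GL}(\intV)(O_{\ES,x}^{\ \widehat{}{\ \ \ }})$, where $g_t^{\flat}$ denotes the base change of $g_t$ along $W(k(x))\hookrightarrow O_{\ES,x}^{\ \widehat{}{\ \ \ }}$.

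Granting this factorization, the first assertion follows immediately: $g_t\in \intG(W(k(x)))$ by Proposition \ref{gt1} 1), while Faltings's construction (\ref{prepforbackI+}) asserts that $R_G\subseteq R$ is the subring over which the horizontal continuation of $s$ remains a Tate tensor. Hence the universal $u$ lies in $U_G(R_G)\subseteq G(R_G)\cong \intG(O_{\ES,x}^{\ \widehat{}{\ \ \ }})$, and so $\widehat{g}_t\in \intG(O_{\ES,x}^{\ \widehat{}{\ \ \ }})$. Alternatively, one can verify this directly by tracing $s\otimes 1$ through each step: $\xi$ and $(t\otimes 1)^{(\sigma)}$ carry $s\otimes 1$ to $\sdr{_{,\widetilde x}}\otimes 1$; the linearized Frobenius takes $\sdr{_{,\widetilde x}}\otimes 1$ to $\sdr{_{,\widetilde x}}$ by the proof of Proposition \ref{gt1} 1); the identity $\sdr{_{,\widetilde x}}\otimes 1=\sdr\otimes 1$ in $\V^\otimes\otimes O_{\ES,x}^{\ \widehat{}{\ \ \ }}$ recalled in \ref{prepforbackI+} combined with $u\in G$ preserving this tensor shows $u_t$ fixes it; finally $(t\otimes 1)^{-1}$ returns it to $s\otimes 1$.

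For the torsor assertion I would globalize Proposition \ref{gt1} 2): the cocharacter $\nu:=\widehat{g}_t\,(\mu'^{(\sigma)}\!\otimes \mathrm{id})\,\widehat{g}_t^{-1}$ of $\intG\otimes O_{\ES,x}^{\ \widehat{}{\ \ \ }}$ induces the splitting $t^{-1}(\widehat{\V}_0)\oplus t^{-1}(\widehat{\V}_1)$ of $\intV\otimes O_{\ES,x}^{\ \widehat{}{\ \ \ }}$, by exactly the diagram chase used in the proof of Proposition \ref{gt1} 2). Choosing $g_1\in \intG(W(k(x)))$ with $g_1\mu g_1^{-1}=\mu'$ as in Corollary \ref{ptI+-}, the parabolic stabilizing the ascending filtration $\widehat{\V}_0\subseteq \V\otimes O_{\ES,x}^{\ \widehat{}{\ \ \ }}$ is $\widehat{g}_t\cdot (g_1 P_- g_1^{-1})^{(\sigma)}\cdot \widehat{g}_t^{-1}$. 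Hence $\I_-=t\,\widehat{g}_t\cdot \bigl(g_1(P_-\otimes W(k(x)))g_1^{-1}\bigr)^{(\sigma)}\otimes O_{\ES,x}^{\ \widehat{}{\ \ \ }}$ is a trivial $P_-^{(\sigma)}$-torsor, trivialized by the section $t\,\widehat{g}_t\,g_1^{(\sigma)}$. The main obstacle is Step 1, namely justifying the factorization $\widehat{g}_t=u\cdot g_t^{\flat}$ cleanly: one must carefully separate the $\sigma$-linear Frobenius part (which accounts for $g_t$) from the $O_{\ES,x}^{\ \widehat{}{\ \ \ }}$-linear universal deformation part (which accounts for $u$), and keep track of the conjugation by $t$ that passes between the $\intV$-picture and the $\V_{\widetilde x}$-picture.
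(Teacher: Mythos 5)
Your proposal is correct and follows essentially the same route as the paper: both arguments rest on the factorization $\widehat{g}_t = u\cdot (g_t\otimes 1)$ in $\mathrm{GL}(\intV)(O_{\ES,x}^{\ \widehat{}{\ \ }})$, conclude $\widehat{g}_t\in\intG$ because $g_t\in\intG(W(k(x)))$ (by Proposition \ref{gt1}) and $u\in U_G(R_G)\subseteq\intG(O_{\ES,x}^{\ \widehat{}{\ \ }})$ both preserve $s\otimes 1$, and then globalize Proposition \ref{gt1} 2) via the cocharacter $\widehat{g}_t\mu'^{(\sigma)}\widehat{g}_t^{-1}=u(\nu\otimes 1)u^{-1}$ to exhibit the trivializing section of $\I_-$. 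The paper expresses the torsor as $t\cdot u\bigl(t^{-1}I_{\widetilde{x},-}\times\Spec(O_{\ES,x}^{\ \widehat{}{\ \ }})\bigr)u^{-1}$ rather than your $t\,\widehat{g}_t\cdot\bigl(g_1 P_- g_1^{-1}\bigr)^{(\sigma)}$, but this is only a cosmetic difference in how the coset is displayed; the mathematical content is the same.
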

\begin{proof}
To prove the first statement, we need to show that
$\widehat{g}_t^\otimes(s\otimes 1)=s\otimes 1$. We have the
following commutative diagram
$$\xymatrix@C=1.4cm{\intV^\vee\otimes O_{\ES,x}^{\ \widehat{} }\ar[r]^(.4){(t\otimes 1)^{(\sigma)}\circ
\xi}\ar[dr]^{g_t\otimes 1}
\ar[ddr]^{\widehat{g}_t}&(\V_{\widetilde{x}}\otimes_{W(k(x))}O_{\ES,x}^{\
\widehat{} })^{(\sigma)}
\ar@{=}[r]&\big((\oplus_i(\V_{\widetilde{x}})^i)\otimes
O_{\ES,x}^{\ \widehat{} }\big)^{(\sigma)}\ar[d]^{\Sigma_i(p^{-i}\varphi\otimes \sigma)^{\mathrm{lin}}}\\
 &\intV^\vee\otimes O_{\ES,x}^{\ \widehat{}{\ \ \
}}\ar[d]^{u}&\V_{\widetilde{x}}\otimes O_{\ES,x}^{\
\widehat{} }\ar[l]_{t^{-1}\otimes 1}\ar[d]^{u_t}\\
 &\intV^\vee\otimes O_{\ES,x}^{\ \widehat{}}&\V_{\widetilde{x}}\otimes O_{\ES,x}^{\
\widehat{}}\ar[l]_{t^{-1}\otimes 1}.}$$ We know from Proposition
\ref{gt1} that $g^\otimes_t(s\otimes 1)=s\otimes 1$. But
$u^\otimes(s\otimes 1)=s\otimes 1$ by definition. So
$\widehat{g}_t^\otimes(s\otimes 1)=s\otimes 1$.

To prove the second statement, we use the same method as in the
proof of Corollary \ref{ptI+-}. Let let $I_{\widetilde{x},+}$,
$I_{\widetilde{x},-}$ be as in Corollary \ref{ptI+-} (but with $t$
replaced by $tg_1$). Then by the proof of Lemma \ref{bacIandI+},
we have
$$\I_+=\Big(t\cdot \big(P_+\otimes W(k(x))\big)
\Big)\times \Spec(O_{\ES,x}^{\
\widehat{}})=I_{\widetilde{x},+}\times \Spec(O_{\ES,x}^{\
\widehat{}}).$$ By the proof of Proposition \ref{gt1} 2) and the
commutative diagram above, the splitting $$\intV^\vee\otimes
O_{\ES,x}^{\
\widehat{}}=(t\otimes1)^{-1}(\widehat{\V}_0)\oplus(t\otimes1)^{-1}(\widehat{\V}_1)$$
is induced by the cocharacter $u(\nu\otimes 1)u^{-1}$. So
$\I_-=t\cdot u\big(t^{-1}I_{\widetilde{x},-}\times
\Spec(O_{\ES,x}^{\ \widehat{}})\big)$ and hence it is a trivial
$P_-^{(\sigma)}$-torsor over $\Spec(O_{\ES,x}^{\ \widehat{}})$.
\end{proof}
\subsubsection[]{Description of the $G$-zip over the
complete local ring}\label{G-zipR_G}
From now on, we only need to use mod $p$ parts of previous result, so we simplify notations as follows. We will
write $G$ (resp. $V$) for the special fiber of $\intG$ (resp. $\intV$), and $\mu$, $s$, $R_G$, $\I$, $\I_+$, $\I_-$, $P_+$, $P_-$, $U_+$, $U_-$, $L$,
$t$, $u$ and $g_t$ for their reductions mod $p$. To get a $G$-zip structure on $R_G\cong O_{\ES_0,x}^{\ \widehat{}}$, we only need
to construct an isomorphism $\iota:\I_+^{(p)}/U_+^{(p)}\rightarrow
\I_-/U_-^{(p)}$ of $L^{(p)}$-torsors over $R_G$. The construction is the same as in \ref{ptg-zip}. Let $\mV$ be reduction mod $p$ of $\V\otimes O_{\ES,x}^{\ \widehat{} }$ in Lemma \ref{bacIandI+}, $\mathcal{C}^\bullet$ be the filtration on $\mV$ given by reduction mod $p$ of $(\V\supseteq(\V)^1)\otimes
O_{\ES,x}^{\ \widehat{} }$ there, and $\mathcal{D}_\bullet$ be the filtration on $\mV$ given by reduction mod $p$ of $\widehat{\V}_0\subseteq\V\otimes
O_{\ES,x}^{\ \widehat{} }$ in Lemma \ref{gt2bacI-}. For $\beta\in \I_+(R)$ with $R$ a $R_G$-algebra, denote by $\beta^{(p)}$ its image of Forbenius pull back in $(\I_{+}^{(p)}/U_+^{(p)})(R)$, then the composition
$$\xymatrix@C=1.5cm{
\oplus\gr^{D}_i(V^\vee_R)\ar[r]^{(\phi_0^{-1}\oplus
\phi_1^{-1})\otimes 1}& \oplus\gr_{C}^i(V^{\vee,(p)}_R)\ar[r]^(.56){\beta^{(p)}}&\oplus\gr_{\mathcal{C}}^i(\mV^{(p)}_R)\ar[r]^{\varphi^{\mathrm{lin}}_\bullet\otimes 1}&\oplus\gr^{\mathcal{D}}_i(\mV_R)
}$$ is in $\I_-/U_-^{(p)}(R)$. This gives the morphism $\iota:\I_+^{(p)}/U_+^{(p)}\rightarrow \I_-/U_-^{(p)}$, which is $L^{(p)}$-equivariant as $\phi_0^{-1}\oplus
\phi_1^{-1}$ is so.  Moreover, we have
$(\I,\I_+,\I_-,\iota)\cong \underline{I}_{ug_t}$.

\subsection[Construction of the $G$-zip on the reduction of the integral model]{Construction of the $G$-zip on the reduction of the integral model}
Let us write $\mathcal {A}$ (resp. $\mathcal {V}$, $\sdr$) for the restriction of $\mathcal {A}$ (resp. $\V$, $\sdr$) to
$\ES_0$. Notations as in \ref{G-zipR_G}, we will now explain how to get a $G$-zip on $\ES_0$ using $\mathcal {A}[p]$. Let  $\varphi:\mathcal{A}[p]\rightarrow \mathcal {A}[p]^{(p)}$ and
$v:\mathcal {A}[p]^{(p)}\rightarrow \mathcal {A}[p]$
the Frobenius and Verschiebung respectively, then the
sequences $$\mathcal {A}[p]\stackrel{\varphi}{\rightarrow}
\mathcal {A}[p]^{(p)}\stackrel{v}{\rightarrow} \mathcal {A}[p]\ \ \ \ \ \mathrm{and}\ \ \ \ \ \mathcal {A}[p]^{(p)}\stackrel{v}{\rightarrow} \mathcal {A}[p]\stackrel{\varphi}{\rightarrow} \mathcal {A}[p]^{(p)}$$ are
exact. After applying the contravariant Dieudonn\'{e} functor, we
get exact sequences
$$\mathcal {V}\stackrel{v}{\rightarrow} \mathcal {V}^{(p)}\stackrel{\varphi}{\rightarrow} \mathcal {V}\ \ \ \
\ \mathrm{and}\ \ \ \ \ \mathcal {V}^{(p)}\stackrel{\varphi}{\rightarrow}
\mathcal {V}\stackrel{v}{\rightarrow} \mathcal {V}^{(p)}.$$ Let
$\delta:\mathcal {V}\rightarrow \mathcal {V}^{(p)}$ the Frobenius semi-linear map
$x\mapsto x\otimes 1$, we write $\mathcal {C}^\bullet$ for the descending
filtration given by
$$\mathcal {C}^0:=\mathcal {V}\supseteq \mathcal {C}^1:=\mathrm{Ker}(\varphi\circ \delta)\supseteq
\mathcal {C}^2:=0,$$ and $\mathcal {D}_\bullet$ for the ascending filtration given by
$$\mathcal {D}_{-1}:=0\subseteq \mathcal {D}_0:=\mathrm{Im}(\varphi)\subseteq \mathcal {D}_1:=\mathcal {V}.$$ Let
$\varphi_0:\mathcal {C}^0/\mathcal {C}^1\rightarrow \mathcal {D}_0$ be the natural map induced by
$\varphi\circ \delta$. Note that $v$ induces an isomorphism
$\mathcal {V}/\mathrm{Im}(\varphi)\stackrel{\simeq}{\rightarrow}\mathrm{Ker}(\varphi)$,
whose inverse will be denoted by $v^{-1}$. Let
$\varphi_1:\mathcal {C}^1\rightarrow \mathcal {D}_1/\mathcal {D}_0$ be the map $v^{-1}\circ
(\delta|_{\mathcal {C}^1})$, then the tuple $(\mathcal{V}, \mathcal {C}^\bullet, \mathcal {D}_\bullet,
\varphi_\bullet)$ is an $F$-zip over $\ES_0$.

Let $C^\bullet$ and $D_\bullet$ be filtrations on $V^\vee_\kappa$ introduced at the beginning of \ref{ptg-zip}.
\begin{theorem}\label{G-zipES_0}

\

1) Let $I\subseteq\Isom_{\ES_0}(\rintV^\vee\otimes O_{\ES_0}, \mathcal{V})$ be
the closed subscheme defined as$$I:=\Isom_{\ES_0}\big((\rintV^\vee,
s)\otimes O_{\ES_0},\ (\mathcal{V}, \sdr)\big).$$Then $I$ is a
$\rintG$-torsor over $\ES_0$.

2) Let $I_+\subseteq I$ be the closed subscheme
$$I_+:=\Isom_{\ES_0}\big((\rintV^\vee, s, C^\bullet)\otimes O_{\ES_0},\ (\mathcal{V},
\sdr, \mathcal{C}^\bullet)\big).$$Then $I_+$ is a $P_+$-torsor over
$\ES_0$.

3) Let $I_-\subseteq I$ be the closed subscheme
$$I_-:=\Isom_{\ES_0}\big((\rintV^\vee, s, D_\bullet)\otimes O_{\ES_0},\ (\mathcal{V},
\sdr, \mathcal{D}_\bullet)\big).$$Then $I_-$ is a $P_-^{(p)}$-torsor over
$\ES_0$.

4) The $\sigma$-linear maps $\varphi_0$ and $\varphi_1$ induce an
isomorphism $$\iota:I_+^{(p)}/U_+^{(p)}\rightarrow I_-/U_-^{(p)}$$
of $L^{(p)}$-torsors over $\ES_0$.

Hence the tuple $(I,I_+,I_-,\iota)$ is a $G$-zip over $\ES_0$.
\end{theorem}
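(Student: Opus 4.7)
The plan is to reduce all four assertions to their counterparts over the formal completion $\Spec(O_{\ES_0,x}^{\ \widehat{}})$ at each closed point $x \in \ES_0$, where the required statements have been established in Section~\ref{G-zipR_G}. The key input is the compatibility between the globally defined $F$-zip $(\rV, C^\bullet, D_\bullet, \varphi_\bullet)$ built from $F$ and $V$ on $\bar{\mathcal{A}}[p]$ and the reduction modulo $p$ of the filtered $F$-crystal $\mathbb{D}(\mathcal{A}[p^\infty])$ described by Faltings' theory in~\ref{prepforbackI+}. Granted this compatibility, together with the identity $\sdr{_{,\widetilde{x}}}\otimes 1 = \sdr\otimes 1$ recalled in~\ref{prepforbackI+}, the pullbacks of $I$, $I_+$, $I_-$ to $\Spec(O_{\ES_0,x}^{\ \widehat{}})$ identify with the reductions modulo $p$ of $\I$, $\I_+$, $\I_-$ from Lemmas~\ref{bacIandI+} and~\ref{gt2bacI-}.

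For parts (1)--(3), each of $I$, $I_+$, $I_-$ is a closed subscheme of an $\Isom$-scheme that is Zariski-locally a trivial $\mathrm{GL}(\rintV)$-torsor (respectively, a torsor under a parabolic subgroup), and carries a compatible right action by $\rintG$, $P_+$, $P_-^{(p)}$. To conclude that these subschemes are torsors it suffices to exhibit a trivialization fppf-locally on $\ES_0$. Pulling back to $\Spec(O_{\ES_0,x}^{\ \widehat{}})$, Lemmas~\ref{bacIandI+} and~\ref{gt2bacI-} supply the required trivial torsors. Descent from formal completions down to $\ES_0$ is legitimate because closed points are dense in the Jacobson scheme $\ES_0$ and the relevant $\Isom$-schemes are smooth over $\ES_0$, so pointwise triviality can be spread to an \'etale neighborhood of each closed point by Artin approximation (or by a direct smoothness-plus-dimension count on the closed subscheme). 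For part~(4), $\iota$ is defined fppf-locally by the formula from~\ref{ptg-zip}: given a local section $\beta$ of $I_+^{(p)}/U_+^{(p)}$ viewed as an isomorphism of graded pieces $\bigoplus_i \gr_C^i((\rintV)^{(p)}) \otimes A \to \bigoplus_i \gr_C^i(\rV^{(p)}) \otimes A$, one sets
\[
\iota(\beta) \;=\; \Bigl(\bigoplus_i \varphi_i^{\mathrm{lin}}\Bigr) \circ \beta \circ \Bigl(\bigoplus_i \phi_i\Bigr)^{-1},
\]
where $\phi_0 \oplus \phi_1$ comes from the standard $F$-zip on $\rintV$ attached to $\rmu$. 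The $L^{(p)}$-equivariance is the same formal manipulation as at the end of~\ref{ptg-zip}. That $\iota$ is an isomorphism of $L^{(p)}$-torsors is checked after pulling back to each $\Spec(O_{\ES_0,x}^{\ \widehat{}})$, where the whole tuple is explicitly identified with $\underline{I}_{g_1^{-1} u g_t g_1^{(p)}}$ from Construction~\ref{constG-zip}.

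The main technical obstacle is the identification of the global filtration $D_\bullet$ on $\rV$ with the local module $\widehat{\V}_0$ of Lemma~\ref{gt2bacI-} after pullback to $O_{\ES_0,x}^{\ \widehat{}}$: the former uses only $F$ and $V$ on the $p$-torsion $\bar{\mathcal{A}}[p]$, while the latter is built from the full crystalline Frobenius twisted by the universal deformation element $u \in U_G(R_G)$. Matching them reduces to the compatibility of the contravariant Dieudonn\'e module of $\bar{\mathcal{A}}[p]$ with the mod~$p$ reduction of $\mathbb{D}(\mathcal{A}[p^\infty])$, together with the fact that $u$ preserves the $\mu'$-weight grading because it lies in the opposite unipotent $U_G$ of the parabolic stabilizing the Hodge filtration in $\intG$. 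Once this identification is in place, the remainder is formal fppf descent combined with the explicit local computation of Section~\ref{G-zipR_G}.
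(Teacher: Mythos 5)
Your overall route matches the paper's: establish everything over the formal completions $\Spec(O_{\ES_0,x}^{\ \widehat{}})$ using Lemma~\ref{bacIandI+} and Lemma~\ref{gt2bacI-}, then spread out, and define $\iota$ by the same fppf-local formula with the same equivariance computation as in~\ref{ptg-zip}. One small structural difference in parts 1)--3): the paper does not invoke Artin approximation but argues directly that $I\to\ES_0$ is smooth after the faithfully flat base change to each $\Spec(O_{\ES_0,x}^{\ \widehat{}})$, hence smooth at the stalk of every closed point, hence (the smooth locus being open and closed points being dense) smooth; together with pointwise non-emptiness from Theorem~\ref{kisin} 3.a), this yields the torsor property. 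Both routes are fine and essentially equivalent.

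You also correctly isolate the real technical content: identifying the global ascending filtration $D_\bullet$ on $\rV$ (built from $F$ and $V$ on $\bar{\mathcal{A}}[p]$) with the reduction modulo $p$ of $\widehat{\V}_0$ from Lemma~\ref{gt2bacI-}. However, the justification you give contains a false claim: an element $u\in U_G(R_G)$ does \emph{not} preserve the $\mu'$-weight grading. The unipotent radical $U_-$ of the opposite parabolic stabilizes the ascending filtration but does not centralize $\mu'$, so it does not respect the splitting $\V=\V^{0}\oplus\V^{1}$; this already fails for $\mathrm{GL}_2$ with strictly lower-triangular $u$. Fortunately the identification does not rest on any such statement, and your argument as written does not quite go through. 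The correct point, which is what the proof of part 3) in the paper actually uses, is that $u_t$ occurs both in the twisted Frobenius $\varphi=u_t\circ(\varphi_0\otimes\sigma)$ describing the Dieudonn\'e crystal over $O_{\ES,x}^{\ \widehat{}}$ and in the definition of $\widehat{\V}_0$, so it simply factors out of the comparison; what remains is the elementary observation that modulo $p$ the image of $\varphi$ equals $\varphi(\widehat{\V}^{0})$, because $\varphi(\widehat{\V}^{1})\subseteq p\widehat{\V}$ and the projection $\widehat{\V}^{0}\hookrightarrow\V\twoheadrightarrow\V/\widehat{\V}^{1}$ is an isomorphism. Replacing the incorrect assertion about $u$ with this observation, and invoking the usual compatibility between $\mathbb{D}(\bar{\mathcal{A}}[p])$ and the mod~$p$ reduction of $\mathbb{D}(\mathcal{A}[p^\infty])$, closes the gap.
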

\begin{proof}
By construction, $G(S)$ acts simply transitively on $I(S)$ for any $\ES_0$-scheme $S$, so if $I(S)\neq\emptyset$, the morphism $I_S\times_S G_S\rightarrow I_S\times_SI_S$, $(t,g)\mapsto (t,t\cdot g)$ is an isomorphism. To prove 1), it suffices to show that $I$ is smooth over $\ES_0$
with non-empty fibers. The non-emptyness of $I_x$ for a closed
point $x\in \ES_0$ follow from Theorem \ref{kisin} 3.a). For
smoothness, by Lemma \ref{bacIandI+} 1), $I\rightarrow \ES_0$ is
smooth after base-change to the complete local rings at stalks of
closed points. And hence $I\rightarrow \ES_0$ is smooth at the
stalk of each closed point of $\ES_0$. But this implies that it is
smooth at an open neighborhood for each closed point, and hence
smooth.

2) follows from Corollary \ref{ptI+-} 1) and Lemma \ref{bacIandI+}
2) using the same strategy.

To prove 3), we also use the same strategy. Take a point $x\in
\ES_0$, we consider $I_-\times_{\ES_0} \Spec(O_{\ES_0,x}^{\
\widehat{}})$. We claim that $$I_-\times_{\ES_0}
\Spec(O_{\ES_0,x}^{\ \widehat{}})\cong \I_-|_{\Spec(O_{\ES_0,x}^{\
\widehat{}})}.$$ To see this, using notations in Lemma
\ref{gt2bacI-}, we only need to show that $$(\mathcal{V},
\sdr, \mathcal{D}_\bullet)\otimes O_{\ES_0,x}^{\ \widehat{}}\cong(\V\otimes
O_{\ES,x}^{\ \widehat{} },\sdr\otimes
1,\widehat{\V}_0\subseteq\V\otimes O_{\ES,x}^{\ \widehat{}}
)\otimes O_{\ES_0,x}^{\ \widehat{}}.$$ But by our construction,
$\widehat{\V}_0\subseteq\V\otimes O_{\ES,x}^{\ \widehat{}}$ is the
submodule generated by $\varphi(\widehat{\V}^0)$, and the
composition
$$\widehat{\V}^0\otimes O_{\ES_0,x}^{\ \widehat{}}\subseteq \V\otimes
O_{\ES_0,x}^{\ \widehat{}}\twoheadrightarrow (\V\otimes
O_{\ES_0,x}^{\ \widehat{}})/(\widehat{\V}^1\otimes O_{\ES_0,x}^{\
\widehat{}})$$ is an isomorphism, as it has an inverse
$\mathrm{pr}_1$. So $\widehat{\V}_0\otimes O_{\ES_0,x}^{\
\widehat{}}=\mathrm{Im}(\varphi)$ in $\V\otimes O_{\ES_0,x}^{\
\widehat{}}$, and this proves~3).

For 4), the same argument as before Remark \ref{pt I sub g} works. For $\beta\in I_+(R)$ with $\mathrm{Spec}(R)$ an affine scheme over $\ES_0$, denote by $\beta^{(p)}$ its image of Forbenius pull back in $(I_{+}^{(p)}/U_+^{(p)})(R)$, the composition
$$\xymatrix@C=1.5cm{
\oplus\gr^{D}_i(V^\vee_R)\ar[r]^{(\phi_0^{-1}\oplus
\phi_1^{-1})\otimes 1}& \oplus\gr_{C}^i(V^{\vee,(p)}_R)\ar[r]^(.56){\beta^{(p)}}&\oplus\gr_{\mathcal{C}}^i(\mV^{(p)}_R)\ar[r]^{\varphi^{\mathrm{lin}}_\bullet\otimes 1}&\oplus\gr^{\mathcal{D}}_i(\mV_R)
}$$ is in $I_-/U_-^{(p)}(R)$. This induces a morphism $\iota:I_+^{(p)}/U_+^{(p)}\rightarrow I_-/U_-^{(p)}$, which is $L^{(p)}$-equivariant as $\phi_0^{-1}\oplus
\phi_1^{-1}$ is so.
\end{proof}

\

\section[Ekedahl-Oort Strata for Shimura varieties of Hodge type]
{Ekedahl-Oort Strata for Shimura varieties of Hodge type}

\subsection[Basic properties of Ekedahl-Oort strata]
{Basic properties of Ekedahl-Oort strata}

In this section, we will define Ekedahl-Oort strata for Shimura varieties of Hodge type, and study their basic properties. Let $G$, $V$, $\mu$, $P_+$, $P_-$ and $L$ be as in \ref{G-zipR_G}, and $(I, I_+, I_-, \iota)$ be the $G$-zip constructed in the previous theorem.
\begin{definition}\label{defEO}
The $G$-zip $(I, I_+, I_-, \iota)$ on $\ES_0$ induces a morphism
of smooth algebraic stacks $\zeta:\ES_0\rightarrow
G\texttt{-}\mathtt{Zip}_\kappa^{\mu}$. For a point $x$ in the
topological space of $G\texttt{-}\mathtt{Zip}_\kappa^{\mu}\otimes
\bar{\kappa}$, the Ekedahl-Oort stratum in $\ES_0\otimes
\bar{\kappa}$ associated to $x$ is defined to be~$\zeta^{-1}(x)$.
\end{definition}
Now we will state our main result.
\begin{theorem}\label{zetasmooth}
The morphism $\zeta:\ES_0\rightarrow
G\texttt{-}\mathtt{Zip}_\kappa^{\mu}$ is smooth.
\end{theorem}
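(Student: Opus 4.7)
The plan is to reduce smoothness of $\zeta$ to an elementary Lie-theoretic transversality identity, by means of the explicit formal-local description of the $G$-zip obtained in Section \ref{G-zipR_G}.

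Smoothness is local on the source, so it suffices to prove smoothness of $\zeta$ at each closed point $x\in\ES_0$. By Theorem \ref{mainPWZ} the target $G\texttt{-}\mathtt{Zip}_{\kappa}^{\mu}$ is isomorphic to the quotient stack $[E_{G,\chi}\backslash G_{\kappa}]$ with its smooth atlas $\pi\colon G_{\kappa}\to[E_{G,\chi}\backslash G_{\kappa}]$; consequently $\zeta$ is smooth if and only if the base-change $\mathfrak{m}\colon\ES_{0}\times_{G\texttt{-}\mathtt{Zip}_{\kappa}^{\mu}}G_{\kappa}\to G_{\kappa}$ is smooth.

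Fix $x\in\ES_0$, a lift $\widetilde{x}\in\ES(W(k(x)))$, and a trivialization $t$ as in Theorem \ref{kisin}. Kisin's application of Faltings's deformation theory (as used in Section \ref{G-zipR_G}) gives an isomorphism $O_{\ES_{0},x}^{\wedge}\cong R_{G}$, where $R_{G}$ is the completion at the identity of the opposite unipotent $U_{G}\subseteq G_{\kappa}$ of the Hodge parabolic $P_{G}$; under this isomorphism the restriction of $\zeta$ to $\Spec R_{G}$ is given by the $G$-valued point $g_{1}^{-1}u\,g_{t}g_{1}^{(p)}$, with $u\in U_{G}(R_{G})$ the universal element. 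Consequently the pullback $\mathfrak{m}$, restricted to a formal neighborhood of the corresponding point in $\ES_{0}\times_{G\texttt{-}\mathtt{Zip}_{\kappa}^{\mu}}G_{\kappa}$, is identified with the algebraic morphism
\[
\mathfrak{m}_{0}\colon E_{G,\chi}\times U_{G}\longrightarrow G_{\kappa},\qquad (q,y)\longmapsto q\cdot(g_{1}^{-1}y\, g_{t}g_{1}^{(p)}).
\]
It therefore suffices to prove $\mathfrak{m}_{0}$ is smooth.

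By $E_{G,\chi}$-equivariance, we need only check smoothness of $\mathfrak{m}_{0}$ at $(e,y)$ for each $y\in U_{G}$, that is, surjectivity of the differential there. Since the Frobenius $\mathrm{Frob}_{p}\colon L\to L^{(p)}$ has zero differential in characteristic $p$, the fibre-product defining $E_{G,\chi}$ yields $\mathrm{Lie}(E_{G,\chi})=\mathrm{Lie}(P_{+})\oplus\mathrm{Lie}(U_{-}^{(p)})$. Writing $\psi(y):=g_{1}^{-1}y\,g_{t}g_{1}^{(p)}$ and working in left-translated coordinates at $\psi(y)$, the image of $d(\mathfrak{m}_{0})_{(e,y)}$ contains the subspace $\mathrm{Ad}(\psi(y)^{-1})\,\mathrm{Lie}(P_{+})+\mathrm{Ad}(g_{1}^{-(p)}g_{t}^{-1})\,\mathrm{Lie}(U_{G})$. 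Rewriting via $\psi(y)^{-1}=g_{1}^{-(p)}g_{t}^{-1}y^{-1}g_{1}$ and $\mathrm{Ad}(g_{1})\,\mathrm{Lie}(P_{+})=\mathrm{Lie}(P_{G})$, this becomes $\mathrm{Ad}(g_{1}^{-(p)}g_{t}^{-1})\big[\mathrm{Ad}(y^{-1})\,\mathrm{Lie}(P_{G})+\mathrm{Lie}(U_{G})\big]$, and this equals $\mathrm{Lie}(G_{\kappa})$: for $y\in U_{G}$, $\mathrm{Ad}(y^{-1})$ preserves $\mathrm{Lie}(U_{G})$ (as $U_{G}$ is a subgroup), so $\mathrm{Ad}(y^{-1})[\mathrm{Lie}(P_{G})+\mathrm{Lie}(U_{G})]=\mathrm{Ad}(y^{-1})\,\mathrm{Lie}(G_{\kappa})=\mathrm{Lie}(G_{\kappa})$ by the Levi decomposition $\mathrm{Lie}(G_{\kappa})=\mathrm{Lie}(P_{G})\oplus\mathrm{Lie}(U_{G})$.

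The principal obstacle is this final Lie-theoretic transversality computation; the rest of the argument is formal, relying on the explicit formal-local description built up in Chapter~2 and on the standard criterion that smoothness of a morphism of stacks can be tested on a smooth atlas of the target.
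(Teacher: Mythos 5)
Your proof is correct and follows essentially the same route as the paper's: both base change along the $E_{G,\mu}$-torsor atlas $G_\kappa\to G\texttt{-}\mathtt{Zip}_\kappa^{\mu}$, use the formal-local identification of the $G$-zip over $R_G$ as $\underline{I}_{g_1^{-1}ug_tg_1^{(p)}}$ from Section 2.3, and conclude by checking surjectivity of a tangent map. The only difference is that you write out the final Lie-algebra transversality computation explicitly (using $\mathrm{Lie}(E_{G,\mu})=\mathrm{Lie}(P_+)\oplus\mathrm{Lie}(U_-^{(p)})$ and the Levi decomposition), which the paper merely asserts.
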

\begin{proof}
By Theorem \ref{mainPWZ}, $G_\kappa\rightarrow
G\texttt{-}\mathtt{Zip}_\kappa^{\mu}$ is an $E_{G,\mu}$-torsor. Let $k$ be $\overline{\kappa}$, to
prove that $\zeta$ is smooth, it suffices to prove that in the
cartesian diagram $$\xymatrix{
\ES^{\#}_{0,k}\ar[r]\ar[d]^{\zeta^{\#}}&\ES_{0,k}\ar[d]\\
G_k\ar[r]&G\texttt{-}\mathtt{Zip}_\kappa^{\mu}\otimes k,}$$ the
morphism $\zeta^{\#}$ is smooth. Note that $\ES^{\#}_{0,k}$ and
$G_k$ are both smooth over~$k$, so to show that
$\zeta^{\#}$ is smooth, it suffices to show that the tangent map
at each closed point is surjective (see \cite{AG} Chapter 3,
Theorem 10.4).

Let $x^{\#}\in \ES^{\#}_{0,k}$ be a closed point, its image in $\ES_{0,k}$
is denoted by $x$ which is also a closed point. Let $R_G$ be as in
\ref{G-zipR_G} which is the reduction modulo $p$ of the universal deformation
ring at $x$. Consider the cartesian diagram
$$\xymatrix{
X\ar[r]\ar[d]^\alpha&\Spec(R_G)\ar[d]\\
G_k\ar[r]&G\texttt{-}\mathtt{Zip}_\kappa^{\mu}\otimes k.}$$ The
morphism $X\rightarrow \Spec(R_G)$ is a trivial $E_{G,\mu}$-torsor
by our construction at the very end of~\ref{G-zipR_G}: the $G$-zip
over $R_G$ is isomorphic to $\underline{I}_{ug_t}$ (see
Construction~\ref{constG-zip} and 2.3.4). The $R_G$-point $ug_t$ of
$G$ gives a trivialization of the $E_{G,\mu}$-torsor $X$
over $R_G$. This trivialization induces an isomorphism from
$\Spec(R_G)\times_k (E_{G,\mu})_k$ to $X$ that translates
$\alpha$ into the morphism $\beta:\Spec(R_G)\times_k
(E_{G,\mu})_k\rightarrow G_k$ that sends, for any $k$-scheme $T$,
a point $(u,l,u_+,u_-)$ to $lu_+ug_t(l^{(p)}u_-)^{-1}$ (see
Equation~\ref{eqnE-act} and the line following it, and note that
as $k$-scheme, $E_{G,\mu}=L\times U_+\times U_-^{(p)}$, and
that $R_G$ is the complete local ring of $U_-$ at the origin).

Let $k[\epsilon]=k[x]/(x^2)$, and $g\in G(k[\epsilon])$ be a deformation of $g_t$. By viewing $g_t$ as an element in $G(k[\epsilon])$, we get a $g_0\in \mathrm{Lie}(G_k)=\mathrm{Ker}(G(k[\epsilon])\rightarrow G(k))$ such that $g=g_0g_t$. By \cite{linearAG} Chapter IV Proposition 14.21 (iii), the product map $ L\times U_{+,k}\times U_{-,k}\rightarrow G_k$ is an open immersion, so there exists $u\in \mathrm{Lie}(U_{-,k})=R_G(k[\epsilon])$, $l\in \mathrm{Lie}(L_k)$ and $u_+\in \mathrm{Lie}(U_{+,k})$ such that $lu_+u=g_0$. Noting that $l^{(p)}=\mathrm{id}$, we see that $\beta(u,l,u_+,\mathrm{id})=g$, which proves the theorem.
\end{proof}

%Let $\ES^{\#}_0$ be the fiber product
%$$\xymatrix{
%\ES^{\#}_0\ar[rrr]\ar[d]&&&I_-\ar[d]\\
%I_+\ar[r]&I_+/U_+\ar[r]&I_+^{(p)}/U_+^{(p)}\ar[r]^{\iota}&I_-/U_-^{(p)}.}$$
%Then $\ES^{\#}_0$ is an $E_{G,\mu}$-torsor over $\ES_0$. For any
%$S/\kappa$ and any $S$-valued point $x$ of $\ES^{\#}_0$, $x=(a,b)$
%with $a\in I_+(S)$, $b\in I_-(S)$, s.t. they give the same $S$-point
%of $I_-/U_-^{(p)}$. Note that $I_+$ and $I_-$ are closed subschemes
%of $I$, so $a$ and $b$ are $S$-points of $I$. And hence $a^{-1}b$ is
%an element in $G_{\kappa}(S)$, as the tensor $s\otimes 1$ is mapped
%to $s\otimes 1$. This gives a morphism
%$\zeta^{\#}:\ES^{\#}_0\rightarrow G_\kappa$. There is a left
%$E_{G,\mu}$-action on $\ES^{\#}_0$ given by
%\begin{align*}
%E_{G,\mu}(S)\times \ES^{\#}_0(S)&\rightarrow \ES^{\#}_0(S) \\
%\big((p_1,p_2),\ (a,b)\big)&\mapsto (a\circ p_1^{-1}, b\circ
%p_2^{-1}),
%\end{align*}
%for all $(p_1,p_2)\in E_{G,\mu}(S)$ and $(a,b)\in \ES^{\#}_0(S)$.
%Clearly, $\zeta^{\#}$ is $E_{G,\mu}$-equivariant with respect to the
%$E_{G,\mu}$-action on $\ES^{\#}_0(S)$ defined above and that on
%$G_\kappa$ defined in Section 1.2.

%We have a commutative square
%$$\xymatrix{
%\ES^{\#}_0\ar[r]\ar[d]^{\zeta^{\#}}&\ES_0\ar[d]^{\zeta}\\
%G_\kappa\ar[r]&[E_{G,\mu}\backslash G_\kappa]}$$ with horizontal
%morphisms $E_{G,\mu}$-torsors. This diagram is cartesian, so to
%prove that $\zeta$ is smooth is the same as to prove that
%$\zeta^{\#}$ is smooth.

\subsubsection[]{Dimension and closure of a stratum}

Thanks to Theorem \ref{zetasmooth}, the combinatory description
for the topological space of $[E_{G,\mu}\backslash G_\kappa]$
developed in \cite{zipdata} can be used to describe Ekedahl-Oort
strata for reduction of a Hodge type Shimura variety, and gives
dimension formula and closure for each stratum. We will first
present some notations and technical results following \cite{VW}
and \cite{zipdata}, and then state how to use them.

Let $B\subseteq G$ be a Borel subgroup, and $T\subseteq B$ be a
maximal torus. Note that such a $B$ exists by \cite{gpfinifield}
Theorem 2, and such a $T$ exists by \cite{SGA3} XIV Theorem 1.1. Let $W(B,T):=\text{Norm}_G(T)(\overline{\kappa})/T(\overline{\kappa})$ be the
Weyl group, and $I(B,T)$ be the set of simple reflections defined
by $B_{\overline{\kappa}}$. Let $\varphi$ be the Frobenius on $G$ given by the
$p$-th power. It induces an isomorphism $(W(B,T),W(B,T))\rightarrow (W(B,T),W(B,T))$
of Coxeter systems still denoted by $\varphi$.

A priori the pair $(W(B,T),I(B,T))$ depends on the pair $(B,T)$. However, any other pair $(B', T')$ with $B'\subseteq G_{\overline{\kappa}}$ a Borel subgroup and $T'\subseteq B'$ a maximal torus is obtained on conjugating $(B_{\overline{\kappa}},T_{\overline{\kappa}})$ by some $g\in G(\overline{\kappa})$ which is unique up to right multiplication by $T_{\overline{\kappa}}$. So conjugation by $g$ induces isomorphisms $W(B,T)\rightarrow W(B',T')$ and $I(B,T)\rightarrow I(B',T')$ that are independent of $g$. Moreover, the morphisms attached to any three of such pairs are compatible, so we will simply write $(W,I)$ for $(W(T),I(B,T))$, and view it as `the' Weyl group with `the' set of simple reflections.

The cocharacter
$\mu:\mathbb{G}_m\rightarrow G_\kappa$ as in 3.1 gives a parabolic subgroup
$P_+$, and hence a subset $J\subseteq I$ by taking simple roots whose
inverse are roots of $P_+$. Let $W_J$ the subgroup of $W$ generated by $J$, and ${}^JW$ be the
set of elements $w$ such that $w$ is the element of minimal length in
some coset $W_Jw'$. Note that there is a unique element in $W_Jw'$
of minimal length, and each $w\in W$ can be uniquely written as
$w=w_J{}^Jw$ with $w_J\in W_J \text{ and } {}^Jw\in {}^JW$. In
particular, ${}^JW$ is a system of representatives of $W_J\backslash W$.

Furthermore, if $K$ is a second subset of $I$, then for each $w$,
there is a unique element in $W_JwW_K$ which is of minimal length.
We will denote by ${}^JW^K$ the set of elements of minimal length,
and it is a set of representatives of $W_J\backslash W/W_K$.

Let $\omega_0$ be the element of maximal length in $W$, set
$K:={}^{\omega_0}\!\varphi(J)$. Here we write ${}^g\!J$ for
$gJg^{-1}$. Let $x\in {}^K\!W^{\varphi(J)}$ be the element of
minimal length in $W_K\omega_0W_{\varphi(J)}$. Then $x$ is the
unique element of maximal length in ${}^K\!W^{\varphi(J)}$ (see
\cite{VW} 5.2). There is a partial order $\preceq$ on ${}^JW$,
defined by $w'\preceq w$ if and only if there exists $y\in W_J$,
$yw'x\varphi(y^{-1})x^{-1}\leq w$ (see \cite{VW} Definition 5.8). Here $\leq$ is the Bruhat order (see A.2 of
\cite{VW} for the definition). The partial order $\preceq$ makes
${}^JW$ into a topological space.

Now we can state the the main result of Pink-Wedhorn-Ziegler that
gives a combinatory description of the topological space of
$[E_{G,\mu}\backslash G_\kappa]$ (and hence
$G\texttt{-}\mathtt{Zip}_\kappa^{\mu}$).
\begin{theorem}\label{collectzipdata}
For $w\in {}^{J}W$, and $T'\subseteq B'\subseteq
G_{\overline{\kappa}}$ with $T'$ (resp. $B'$) a maximal torus
(resp. Borel) of $G_{\overline{\kappa}}$ such that $T'\subseteq
L_{\overline{\kappa}}$ and $B'\subseteq P_{-,\overline{\kappa}}^{(p)}$, let $g,\dot{w}\in \mathrm{Norm}_{G_{\overline{\kappa}}}(T')$ be a
representative of $\varphi^{-1}(x)$ and $w$ respectively, and
$G^w\subseteq G_{\overline{\kappa}}$ be the $E_{G,\mu}$-orbit of
$gB'\dot{w}B'$. Then

1) The orbit $G^w$ does not depends on the choices of $\dot{w}$,
$T'$, $B'$ or $g$.

2) The orbit $G^w$ is a locally closed smooth subvariety of
$G_{\overline{\kappa}}$. Its dimension is $\mathrm{dim}(P)+l(w)$.
Moreover, $G^w$ consists of only one $E_{G,\mu}$-orbit. So $G^w$
is actually the orbit of $g\dot{w}$.

3) Denote by $\big|[E_{G,\mu}\backslash G_{\kappa}]\otimes
\overline{\kappa}\big|$ the topological space of
$[E_{G,\mu}\backslash G_{\kappa}]\otimes \overline{\kappa}$, and
still write ${}^JW$ for the topological space induced by the
partial order $\preceq$. Then the association $w\mapsto G^w$
induces a homeomorphism ${}^JW\rightarrow
\big|[E_{G,\mu}\backslash G_{\kappa}]\otimes
\overline{\kappa}\big|$.
\end{theorem}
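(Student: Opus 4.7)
The plan is to derive this theorem as a direct translation of the structural classification of Pink--Wedhorn--Ziegler in \cite{zipdata}, specialized to the algebraic zip datum attached to $(G,\mu)$. Recall that this datum is $(G,P_+,P_-^{(p)},L,L^{(p)},\varphi_L)$ with $\varphi_L:L\to L^{(p)}$ the relative Frobenius, and that $E_{G,\mu}$ is precisely the associated zip group acting on $G_\kappa$ by $(p_+,p_-)\cdot h=p_+hp_-^{-1}$. Every part of the theorem is already proved in \cite{zipdata} in the language of algebraic zip data; the only work here is to match notations and to verify that the specific choice of base point $g\dot{w}$ used in our definition lies in the orbit parametrized by $w$ under their bijection.

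For Part 1, I would check each choice in turn. Replacing $\dot w$ by another lift $\dot w t$ with $t\in T'$ changes $B'\dot w B'$ to $B'\dot w tB' = B'\dot w B'$ since $T'\subseteq B'$, so the $E_{G,\mu}$-orbit is unchanged. Replacing $g$ by $gt$ with $t\in T'$ gives $gtB'\dot w B' = gB'\dot w B'$ by the same argument. Any two pairs $(T',B')$ with $T'\subseteq L_{\overline\kappa}$ and $B'\subseteq P_{-,\overline\kappa}$ are conjugate by an element $l\in L(\overline\kappa)$; since $L$ embeds diagonally in $E_{G,\mu}$ via $l\mapsto(l,l^{(p)})$, left multiplication by $l$ moves $gB'\dot w B'$ into the same $E_{G,\mu}$-orbit as before. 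Finally, different representatives of the class $\varphi^{-1}(x)$ in the Weyl group differ by conjugation by $T'$, which is absorbed into $B'$ on the right exactly as for $\dot w$.

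For Part 2, I would invoke the main orbit theorem of \cite{zipdata} (the one asserting that the $E_{G,\mu}$-orbits on $G_{\overline\kappa}$ are in canonical bijection with ${}^JW$, are locally closed smooth subvarieties, and that the orbit indexed by $w$ has dimension $\dim P + l(w)$). The point that requires checking is only that our specific representative $g\dot w$ lies in the orbit labelled by $w$ in their parametrization: this follows by unwinding the construction of the bijection, where the orbit corresponding to $w\in{}^JW$ is defined as the $E_{G,\mu}$-orbit of $gB'\dot w B'$ with $g$ a lift of $\varphi^{-1}(x)$; since this set is a single orbit, its closure contains and equals the orbit of any of its points, in particular $g\dot w$.

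For Part 3, combining Part 2 and the fact that the $E_{G,\mu}$-orbits partition $G_{\overline\kappa}$ gives a bijection of sets $w\mapsto G^w$ between ${}^JW$ and $|[E_{G,\mu}\backslash G_\kappa]\otimes\overline\kappa|$. Both sides carry a topology: on the right, from the Zariski topology on $G_{\overline\kappa}$; on the left, from the partial order $\preceq$ of \cite{VW} Definition~4.8. The content is that the closure of $G^w$ is the union of $G^{w'}$ with $w'\preceq w$, which is established in \cite{zipdata} (their closure theorem expressed via the twisted Bruhat order) together with the identification of that twisted Bruhat order with $\preceq$ carried out in \cite{VW} Proposition~4.9. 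I expect this identification of the two orders---the one coming natively from the geometry of orbit closures and the combinatorial one defined through $x$ and $\sigma$---to be the main and only delicate step; everything else is a direct citation or an absorption of factors into $E_{G,\mu}$.
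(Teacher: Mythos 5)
Your proof takes essentially the same route as the paper: the paper's own proof is precisely a collection of citations to \cite{zipdata} (Proposition 5.8 for part 1; Theorem 1.3, Proposition 7.3, and Theorem 7.5 for part 2; Theorem 1.4 for part 3), and your argument correctly identifies the zip datum, the zip group $E_{G,\mu}$, and the need to match the base point $g\dot w$ with the Pink--Wedhorn--Ziegler parametrization; the extra unwinding you give of each citation is consistent with their content.
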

\begin{proof}
By \cite{zipdata} Lemma 12.11, $(B',T',g)$ is a frame of $(G_{\overline{\kappa}}, P_{+,\overline{\kappa}}, P_{-,\overline{\kappa}}^{(p)}, \varphi)$ in the sense of \cite{zipdata} Definition 3.6. Here $\varphi:P_{+,\overline{\kappa}}/U_{+,\overline{\kappa}}\rightarrow P_{-,\overline{\kappa}}^{(p)}/U_{-,\overline{\kappa}}^{(p)}$ is the morphism induced by the relative Frobenius of $L$. So the first statement is Proposition 5.8 of \cite{zipdata}, the
second statement is \cite{zipdata} Theorem 1.3, Proposition 7.3
and Theorem 7.5, and the third statement is \cite{zipdata} Theorem
1.4.
\end{proof}
The next statement (including its proof) is a word by word
adaptation of results in \cite{VW} (to be more precise,
Proposition 4.7, Theorem 6.1 and Corollary 9.2).
\begin{proposition}\label{dimandclos}
Let $J$ be the type of $P_+$, then the Ekedahl-Oort strata are
given by the finite set ${}^JW$. For $w\in {}^JW$, the stratum
$\ES_0^w$ is smooth and equi-dimensional of dimension $l(w)$ if
$\ES_0^w\neq\emptyset$. Moreover, the closure of $\ES_0^w$ is the
union of $\ES_0^{w'}$ such that $w'\preceq w$.
\end{proposition}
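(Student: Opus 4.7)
The plan is to deduce every claim by pulling the combinatorial picture on $G\texttt{-}\mathtt{Zip}_\kappa^{\mu}$ back along the smooth morphism $\zeta$ of Theorem~\ref{zetasmooth}. By Theorem~\ref{mainPWZ} together with Theorem~\ref{collectzipdata}(3), the underlying topological space of $G\texttt{-}\mathtt{Zip}_\kappa^{\mu}\otimes\overline{\kappa}$ is homeomorphic via $w\mapsto G^w$ to ${}^JW$ equipped with the order topology induced by $\preceq$; this furnishes the indexing of the strata $\ES_0^w=\zeta^{-1}(\{w\})$ by ${}^JW$. For smoothness, each orbit $G^w\subseteq G_{\overline{\kappa}}$ is smooth and locally closed by Theorem~\ref{collectzipdata}(2), so the corresponding substack $\mathcal{Z}^w:=[G^w/E_{G,\mu}]$ of $G\texttt{-}\mathtt{Zip}_\kappa^{\mu}\otimes\overline{\kappa}$ is smooth and locally closed, and its preimage $\ES_0^w\otimes\overline{\kappa}=\zeta^{-1}(\mathcal{Z}^w)$ under the smooth $\zeta$ is smooth and locally closed in $\ES_0\otimes\overline{\kappa}$. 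Irreducibility of the single orbit $G^w$ then forces $\ES_0^w$ to be equi-dimensional.

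For the dimension formula, Equation~(\ref{eqnE-act}) gives $E_{G,\mu}\cong L\times U_+\times U_-^{(p)}$ as a $\kappa$-scheme, so $\dim E_{G,\mu}=\dim L+\dim U_++\dim U_-=\dim G$, recovering the fact that $G\texttt{-}\mathtt{Zip}_\kappa^{\mu}$ is zero-dimensional (Theorem~\ref{mainthofGzip}). Theorem~\ref{collectzipdata}(2) then yields
\[\dim\mathcal{Z}^w=\dim G^w-\dim E_{G,\mu}=\dim(P)+l(w)-\dim G=l(w)-\dim(G/P),\]
where $P$ is the parabolic attached to $\mu$. Since $P_+$ and $P_-$ share the Levi $L$, they have the same dimension, so $\dim(G/P)=\dim(G/P_+)=\dim X=\dim\ES_0$. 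The smooth morphism $\zeta$ has relative dimension $\dim\ES_0$ at every point, and pulling back a locally closed smooth substack of the target shifts its dimension by that relative dimension, so
\[\dim\ES_0^w=\dim\mathcal{Z}^w+\dim\ES_0=l(w)-\dim(G/P)+\dim(G/P_+)=l(w).\]

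The closure relation is a topological consequence of $\zeta$ being smooth, hence inducing an open continuous map on underlying topological spaces; for any open continuous map $f$ and subset $T$ of its target one has $f^{-1}(\overline{T})=\overline{f^{-1}(T)}$, the non-trivial inclusion following by using openness of $f$ to witness that every open neighborhood of a point of $f^{-1}(\overline{T})$ meets $f^{-1}(T)$. Specializing to $T=\{w\}$ and invoking Theorem~\ref{collectzipdata}(3), which identifies the closure of $w$ in ${}^JW$ as $\{w'\in{}^JW:w'\preceq w\}$, delivers $\overline{\ES_0^w}=\bigsqcup_{w'\preceq w}\ES_0^{w'}$. There is no genuine obstacle at this stage: all of the substance is contained in Theorems~\ref{zetasmooth} and~\ref{collectzipdata}, and the present proposition is a formal consequence; the only points requiring care are the dimension comparison between $P$ and $P_+$ and the fact that smooth morphisms of algebraic stacks are open on underlying topological spaces.
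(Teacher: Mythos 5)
Your argument is correct and follows essentially the same route as the paper: everything is deduced from the smoothness of $\zeta$ (Theorem~\ref{zetasmooth}), the combinatorial description of $G\texttt{-}\mathtt{Zip}_\kappa^{\mu}$ (Theorem~\ref{collectzipdata}), and openness of smooth morphisms for the closure relation. The one small difference is that you establish smoothness of each stratum directly, as the preimage of the smooth locally closed substack $[G^w/E_{G,\mu}]$ under the smooth $\zeta$, whereas the paper defers to the proof of Proposition 10.3 of [VW]; your version is self-contained and amounts to the same idea.
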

\begin{proof}
The first statement follows from our definition of Ekedahl-Oort
strata and Theorem \ref{collectzipdata} 3). For the second one,
note that by Theorem \ref{collectzipdata} 2), each $G_w$ is
equi-dimensional of codimension $\mathrm{dim}(U_-)-l(w)$ in
$G_\kappa$, so each $\ES_0^w$ is equi-dimensional of codimension
$\mathrm{dim}(U_-)-l(w)$ in $\ES_{0,\overline{\kappa}}$, as
$\zeta$ is smooth by Theorem \ref{zetasmooth}. So the dimension of
$\ES_0^w$ is $l(w)$, as $\mathrm{dim}(\ES_0)=\mathrm{dim}(U_-)$.

The smoothness of each stratum follows from a direct adaption of
the proof of Proposition 10.3 of \cite{VW}. More precisely, let
$w:\Spec(\overline{\kappa})\rightarrow E_{G,\mu}\backslash
G_{\kappa}$ be a point. Then its reduced gerbe
$(E_{G,\mu}\backslash G_{\kappa})^w$ is smooth. And hence
$\zeta^{-1}((E_{G,\mu}\backslash G_{\kappa})^w)$ is smooth. But
$\ES_0^w$ is reduced with the same topological space as
$\zeta^{-1}((E_{G,\mu}\backslash G_{\kappa})^w)$, so $\ES_0^w$ is
smooth. For the last statement, by Theorem \ref{collectzipdata}
3), the closure of $\{w\}$ in $\big|[E_{G,\mu}\backslash
G_{\kappa}]\otimes \overline{\kappa}\big|$ is $\{w'\mid w'\preceq
w\}$. So $\overline{\ES_0^w}=\zeta^{-1}(\overline{{w}})$ by the
universally-openness of $\zeta$.
\end{proof}
\begin{remark}
By \cite{zipaddi} Lemma 12.14, Theorem 12.17 and \cite{disinv}
3.26, there is a unique open dense stratum corresponding to the
unique maximal element in ${}^JW$. This stratum will be called the
ordinary stratum. And there is also a unique minimal element in
${}^JW$, namely the element $1$. Its corresponding stratum is
called the superspecial stratum which is expected to be non-empty
(but we can not prove it now)\footnote{There are currently many announced proofs for non-emptiness of Newton strata (by Dong Uk Lee, Kisin-Madapusi Pera and Chia-Fu Yu). Together with works of Kisin on the Langlands-Rapoport conjecture and those of Nie on fundamental elements, this implies the non-emptiness of the superspecial stratum.}. The non-emptiness of the
superspecial stratum implies that every stratum is non-empty, as
$\zeta$ is a open map by Theorem \ref{zetasmooth}.
\end{remark}

\subsection[$F$-zips with additional structure]
{$F$-zips with additional structure}\label{extrastr}

In this subsection, we will describe additional structure on
$F$-zips associated to reductions of Shimura varieties of Hodge
type, and show how to generalize the the theory in \cite{VW} to
study Ekedahl-Oort strata for Shimura varieties of Hodge type.

\subsubsection[]{Description of the additional structures}\label{3.2.3}
Let $(G,V,\mu,s)$ be as at the beginning of \ref{3.2.3}, and $C^\bullet$, $D_\bullet$ be the filtrations on $V^\vee_\kappa$ introduced at the beginning of \ref{ptg-zip}.

\begin{definition}\label{F-zipwithTate}
Let $S$ be a scheme over $\kappa$. By an $F$-zip of type $(G,V,\mu,s)$ with a
Tate class $\sdr$ over $S$, we mean an $F$-zip $(\mV, \mathcal{C}^\bullet,
\mathcal{D}_\bullet, \varphi_\bullet)$ over $S$ equipped with a section
$\sdr: O_S\rightarrow \mV^\otimes$, such that

1) $I:=\Isom_{\ES_0}\big((\rintV^\vee,
s)\otimes O_{\ES_0},\ (\mathcal{V}, \sdr)\big)\subseteq\Isom_{\ES_0}(\rintV^\vee\otimes O_{\ES_0}, \mathcal{V})$ is a
$\rintG$-torsor over $\ES_0$,

2) $I_+:=\Isom_{\ES_0}\big((\rintV^\vee, s, C^\bullet)\otimes O_{\ES_0},\ (\mathcal{V},
\sdr, \mathcal{C}^\bullet)\big)\subseteq I$ is a right $P_+$-torsor over~$\ES_0$,

3) $I_-:=\Isom_{\ES_0}\big((\rintV^\vee, s, D_\bullet)\otimes O_{\ES_0},\ (\mathcal{V},
\sdr, \mathcal{D}_\bullet)\big)\subseteq I$ is a right $P_-^{(p)}$-torsor over~$\ES_0$,

4) $\sdr: O_S\rightarrow \mV^\otimes$ is a Tate sub $F$-zip of weight 0, i.e. the $F$-zip structure on $\mV^\otimes$ restricted to $O_S$ makes it a Tate $F$-zip of weight 0.
\end{definition}

\begin{remark}
Condition 1) in the above definition implies that $\sdr:O_S\hookrightarrow \mV^\otimes$ is a locally direct summand. As there is an fpqc-cover $T$ of $S$, such that $I(T)\neq\emptyset$. An element $t\in I(T)$ identifies $(k\stackrel{s}{\rightarrow} V^\otimes)_T$ and $(O_S\stackrel{\sdr}{\longrightarrow} \mV^\otimes)_T$, so $(\mV^\otimes/O_S)_T\cong(V^\otimes/k)_T$ is free. Noting that being locally free is local for the fpqc topology for finitely generated modules, we see that $O_S\hookrightarrow
\mV^\otimes$ is a locally direct summand. The embedding $\sdr$ is then admissible in the sense of Definition
\ref{abmisizip}.
\end{remark}
We will simply call an `$F$-zip of type $(G,V,\mu,s)$ with a
Tate class $\sdr$' an `$F$-zip with a Tate class $\sdr$' for short. We denoted by $F\texttt{-Zip}_{\sdr}(S)$ the category whose objects are $F$-zips with a Tate class $\sdr$ over $S$, and whose morphisms are isomorphisms of $F$-zips respecting Tate classes.

\begin{construction}
There is a functor $\mathfrak{Z}:G\texttt{-Zip}_\kappa^\mu(S)\rightarrow F\texttt{-Zip}_{\sdr}(S)$ as follows. For $(I,I_+,I_-,\iota)\in G\texttt{-Zip}_\kappa^\mu(S)$, we define $\mV=I\times^G V^\vee_{S}$, $\mathcal{C}^1=I_+\times^{P_+}
C^1_{S}$, $\mathcal{D}_0=I_-\times^{P_-^{(p)}}
D_{0,S}$ and
$\oplus\varphi_i:\oplus\mathrm{gr}_{\mathcal{C}}^i(\mV)\rightarrow
\oplus\mathrm{gr}^{\mathcal{D}}_i(\mV)$ to be the $\sigma$-linear map whose
linearization is the morphism $$\iota\times (\phi_0\oplus
\phi_1):I_+^{(p)}/U_+^{(p)}\times^{L^{(p)}}(\oplus\mathrm{gr}_C^i(V^\vee_\kappa))_S^{(p)}\rightarrow
I_-/U_-^{(p)}\times^{L^{(p)}}(\oplus\mathrm{gr}^D_i(V^\vee_\kappa))_S.$$
Here $\phi_0$ and $\phi_1$ are as in \ref{ptg-zip}. The condition
that $\iota$ is $L^{(p)}$-equivariant implies that $\iota\times
(\phi_0\oplus \phi_1)$ is well defined. The same argument as in the previous remark shows that $\mathcal{C}^1\subseteq \mathcal{V}$ and $\mathcal{D}_0\subseteq \mathcal{V}$ are locally dircet summands. So $(\mV,\mathcal{C}^\bullet, \mathcal{D}_\bullet,\varphi_\bullet)$ is an $F$-zip that satisfies the first three conditions of Definition \ref{F-zipwithTate}. The section $\sdr\in \mV{^\otimes}$ is
the image of $I\times \{s\}$ in $\mV^\otimes=I\times^G
(V_S^\otimes)$. Let $\mathcal{C}^\bullet(\mV^\otimes)$ (resp. $C^\bullet(V_\kappa^\otimes)$) be the filtration induced by $\mathcal{C}^\bullet$ (resp. $C^\bullet$), then $\mathcal{C}^0(\mV^\otimes)=I_+\times^{P_+}
(C^0(V_\kappa^\otimes)_S)$, and $\sdr$ is in $\mathcal{C}^0(\mV^\otimes)$ as $s\in C^0(V_\kappa^\otimes)$ is $G$-invariant. Similarly, $\sdr\in \mathcal{D}_0(\mV^\otimes)$, and it induces injections $O_S\rightarrow \mathcal{C}^0(\mV^\otimes)/\mathcal{C}^1(\mV^\otimes)$ and $O_S\rightarrow \mathcal{D}_0(\mV^\otimes)/\mathcal{D}_{-1}(\mV^\otimes)$. The $F$-zip $(\mV,\mathcal{C}^\bullet, \mathcal{D}_\bullet,\varphi_\bullet)$ induces an $F$-zips structure on $\mV^\otimes$, and in particular a $\sigma$-linear isomorphism $\Phi_0:\mathcal{C}^0(\mV^\otimes)/\mathcal{C}^1(\mV^\otimes)\rightarrow \mathcal{D}_0(\mV^\otimes)/\mathcal{D}_{-1}(\mV^\otimes)$. The linearization of $\Phi_0$ is the identity on $O_S=\mathrm{Im}(\sdr)$, as $\phi_0\oplus
\phi_1$ is so on $s$. So $\sdr:O_S\rightarrow \mV{^\otimes}$ is a Tate sub $F$-zip of weight zero.
\end{construction}
\begin{corollary}
The functor $\mathfrak{Z}:G\mathtt{-Zip}_\kappa^\mu(S)\rightarrow F\mathtt{-Zip}_{\sdr}(S)$ induces an equivalence of categories.
\end{corollary}
\begin{proof}
We only need to construct a quasi-inverse $\mathfrak{G}:F\mathtt{-Zip}_{\sdr}(S)\rightarrow G\texttt{-Zip}_\kappa^\mu(S)$ of $\mathfrak{Z}$.
Let $(\mV, \mathcal {C}^\bullet,
\mathcal {D}_\bullet, \varphi_\bullet)$ be an $F$-zip with a Tate class $\sdr$. By
Definition \ref{F-zipwithTate}, we already have $(I,I_+,I_-)$, and hence only need to construct an isomorphism of $L^{(p)}$-torsors
$\iota:I_+^{(p)}/U_+^{(p)}\rightarrow I_-/U_-^{(p)}$. As in \ref{ptg-zip}, for $\beta\in I_+(R)$ with $\mathrm{Spec}(R)$ an affine scheme over $S$, denote by $\beta^{(p)}$ its image of Forbenius pull back in $(I_{+}^{(p)}/U_+^{(p)})(R)$, then condition 4) of Definition \ref{F-zipwithTate} implies that the composition
$$\xymatrix@C=1.5cm{
\oplus\gr^{D}_i(V^\vee_R)\ar[r]^{(\phi_0^{-1}\oplus
\phi_1^{-1})\otimes 1}& \oplus\gr_{C}^i(V^{\vee,(p)}_R)\ar[r]^(.56){\beta^{(p)}}&\oplus\gr_{\mathcal{C}}^i(\mV^{(p)}_R)\ar[r]^{\varphi^{\mathrm{lin}}_\bullet\otimes 1}&\oplus\gr^{\mathcal{D}}_i(\mV_R)
}$$ is in $I_-/U_-^{(p)}(R)$. This induces a morphism $\iota:I_+^{(p)}/U_+^{(p)}\rightarrow I_-/U_-^{(p)}$, which is $L^{(p)}$-equivariant as $\phi_0^{-1}\oplus
\phi_1^{-1}$ is so. One checks easily that $\mathfrak{G}$ is a quasi-inverse of~$\mathfrak{Z}$.
\end{proof}

\subsubsection[]{Defining Ekedahl-Oort strata using $F$-zips}

In this section, we will follow the construction in \cite{disinv}
and \cite{VW} to show that the Ekedahl-Oort strata defined using
$G$-zips are the same as those defined using $F$-zips with a Tate
class. The main technical tool is still \cite{zipdata}. Fix the datum
$(G,V,\mu,s)$ as before, let $Z_\mu$ be the Zariki sheafification of the
presheaf which associates to a $\kappa$-scheme $S$ the set of
$F$-zip structures $(C^\bullet, D_\bullet, \varphi_\bullet)$ on
$V_S$ with Tate class $s\otimes 1$.

Let $Z'_\mu$ be the Zariski sheafification of the presheaf which
associates to a $\kappa$-scheme $S$ the set of triples
$(P,Q,U_QgU_{\varphi(P)})$, where $P\subset G_S$ is of type $J$
(the type of $P_+$ defined before), $Q\subset G_S$ is of type
$\varphi(J)$ and $g\in G(S)$ is such that $Q$ and
$g\varphi(P)g^{-1}$ are in opposite position. By \cite{disinv}
Corollary 4.3, $Z'_\mu$ is represented by a smooth
$\kappa$-scheme. By \cite{CIMK} Lemma 1.1.1 and the proof of
Proposition 1.1.5, the construction of \cite{disinv} Lemma 5.1
induces an isomorphism $Z_\mu\cong Z'_\mu$. We remark that for an
affine scheme $S$, $Z_\mu(S)$ (resp. $Z'_\mu(S)$) is precisely the
set of triples described above. We also remark that our
construction of $Z_\mu$ is slightly different from $X_\tau$
defined in \cite{disinv} 5.2. We insists to fix the type of
cocharacters inducing the filtrations, rather than the type of the
filtrations. This kills the problem mentioned before \cite{disinv}
Corollary 6.2.

Now we will construct a morphism $Z_\mu\rightarrow
[E_{G,\mu}\backslash G_\kappa]$. By definition, to give such a
morphism is the same as to give an $E_{G,\mu}$-torsor $H$ over
$Z_\mu$, equipped with an $E_{G,\mu}$-equivariant morphism
$H\rightarrow G_\kappa$.

The $F$-zip $(V_\kappa,C^\bullet,
D_\bullet, \phi_\bullet)$ constructed in \ref{ptg-zip} is an element of $Z_\mu(\kappa)$. Using the proof of \cite{zipdata} Lemma 12.5,
the group $G_\kappa\times G_\kappa$ acts on $Z_\mu$ transitively
via
$$(g,h)\cdot (C^\bullet, D_\bullet, \varphi_\bullet)=(gC^\bullet,
hD_\bullet, h\varphi_\bullet g^{-1}),$$ where $h\varphi_i g^{-1}$
is the
composition$$g(C^i)/g(C^{i+1})\stackrel{g^{-1}}{\rightarrow}
C^i/C^{i+1}\rightarrow D_i/D_{i-1}\stackrel{h}{\rightarrow}
h(D_i)/h(D_{i-1}).$$

Under the above action, the stabilizer of
$(V_\kappa,C^\bullet, D_\bullet,
\phi_\bullet)$ is $E_{G, \mu}$ (by the proof of
\cite{zipdata} Lemma 12.5), and hence the action induces an $E_{G,
\mu}$-torsor $G_\kappa\times G_\kappa\rightarrow Z_{\mu}$ which is
$G_\kappa$-equivariant with respect to the diagonal action on
$G_\kappa\times G_\kappa$ and the restriction to diagonal on
$Z_\mu$. The morphism $m:G_\kappa\times G_\kappa\rightarrow
G_\kappa$, $(g,h)\mapsto g^{-1}h$ is a $G_\kappa$-torsor which is
$E_{G,\mu}$-equivariant. By the same reason as in \cite{zipdata}
Theorem 12.7, we get an isomorphism of stacks
$\beta:[G_\kappa\backslash Z_\mu]\simeq [E_{G,\mu}\backslash G_\kappa]$
after passing to quotients.

Let $I$ be $\Isom_{\ES_0}((V_\kappa, s)\otimes O_{\ES_0},\
(\mV,\sdr))$ as before. There is a $G_\kappa$-equivariant morphism from the
$G_\kappa$-torsor $I$ to $Z_\mu$, given by mapping $t\in I$ to the
pull back via $t$ of the $F$-zip structure on $\mV$. This induces
a morphism $\zeta':\ES_0\rightarrow [G_\kappa\backslash Z_\mu]$.
Our Ekedahl-Oort strata are defined by the morphism
$\zeta:\ES_0\rightarrow [E_{G,\mu}\backslash G_\kappa]$
constructed in subsection 3.1. But by what we have seen, one can
identify $[G_\kappa\backslash Z_\mu]$ with $[E_{G,\mu}\backslash
G_\kappa]$ via $\beta$. So it is natural to ask whether they induce the
same theory of Ekedahl-Oort strata.
\begin{proposition}
We have an equality $\beta\circ \zeta'=\zeta$.
\end{proposition}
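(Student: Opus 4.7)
The plan is to verify the equality étale locally on $\ES_0$, using the explicit local description of the $G$-zip on $\ES_0$ from the proof of Theorem \ref{G-zipES_0}. Since both $\zeta$ and $\beta \circ \zeta'$ are morphisms into an Artin stack, it suffices to check the equality after pullback along an étale cover trivializing the $\rintG$-torsor $I$.

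Let $S' \to \ES_0$ be such an étale cover and fix a section $t \in I(S')$. By the last paragraph of the proof of Theorem \ref{G-zipES_0}, there exist elements $g_1, g_t \in G(S')$ such that $g_1 \mu g_1^{-1}$ induces the Hodge filtration on $\rV_{S'}$ via $t$, and such that the restricted $G$-zip is isomorphic to the standard $G$-zip $\underline{I}_{g_1^{-1} g_t g_1^{(p)}}$ of Construction \ref{constG-zip}. Through the identification of Theorem \ref{mainPWZ}, the pullback $\zeta_{|S'}$ is represented by the class of $g_1^{-1} g_t g_1^{(p)} \in G_\kappa(S')$ modulo the $E_{G,\mu}$-action.

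Next, I would unwind $\beta \circ \zeta'$ on $S'$. Transporting the $F$-zip with Tate class on $\rV_{S'}$ through $t$ yields the point of $Z_\mu(S')$ given by the $F$-zip structure $(t^\ast C^\bullet, t^\ast D_\bullet, t^\ast \varphi_\bullet)$ on $V_{S'}$ with Tate class $s \otimes 1$. By construction, $\beta$ is induced by the morphism $m \colon G_\kappa \times G_\kappa \to G_\kappa$, $(g,h) \mapsto g^{-1} h$, via the $E_{G,\mu}$-torsor $G_\kappa \times G_\kappa \to Z_\mu$. Thus to evaluate $\beta \circ \zeta'$ on $S'$, one must exhibit any lift $(g, h) \in G(S') \times G(S')$ of the transported $F$-zip under the transitive action of $G_\kappa \times G_\kappa$ on $Z_\mu$ starting from the standard triple $(C^\bullet(V_\kappa), D_\bullet(V_\kappa), \phi_\bullet)$. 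The natural candidate is $(g, h) = (g_1, g_t g_1^{(p)})$. The equality $g_1 \cdot C^\bullet(V_\kappa) = t^\ast C^\bullet$ is the defining property of $g_1$; the equality $(g_t g_1^{(p)}) \cdot D_\bullet(V_\kappa) = t^\ast D_\bullet$ follows from the étale-local analog of Proposition \ref{gt1}(2) established in the proof of Theorem \ref{G-zipES_0}; and the Frobenius compatibility $(g_t g_1^{(p)}) \phi_\bullet g_1^{-1} = t^\ast \varphi_\bullet$ is essentially the content of the construction of the isomorphism $\iota$ in that same proof, since $\iota$ was built as the unique $L^{(p)}$-equivariant isomorphism implementing this compatibility. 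Granting these, $m(g_1, g_t g_1^{(p)}) = g_1^{-1} g_t g_1^{(p)}$, so $\beta \circ \zeta'$ and $\zeta$ coincide on $S'$ as morphisms to $[E_{G,\mu} \backslash G_\kappa]$, and the result follows by descent.

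The main obstacle will be bookkeeping between the various conventions: the $\sigma$-linear maps $\phi_\bullet$ versus their linearizations; the action of $G_\kappa \times G_\kappa$ on $Z_\mu$ with its asymmetric treatment of the descending and ascending filtrations; and the identification $\xi$ relating the cocharacter $\mu^{(p)}$ acting on $V_\kappa^{(p)}$ with the splitting $D_\bullet(V_\kappa)$ of $V_\kappa$. Once these are aligned, the three compatibilities reduce directly to the formulas for $g_t$ and for $\iota$ already assembled in the proof of Theorem \ref{G-zipES_0}, so no essentially new computation is required.
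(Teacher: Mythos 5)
Your argument is correct, and it does recover the proposition, but it takes a noticeably different route from the one in the paper, which is worth pointing out.

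You trivialize the $G$-torsor $I$ over an \'etale cover $S'$ and then compare explicit $E_{G,\mu}$-orbit representatives on each side: $\zeta|_{S'}$ is represented by $g_1^{-1}g_tg_1^{(p)}$ via Construction~\ref{constG-zip} and the proof of Theorem~\ref{G-zipES_0}, and $(\beta\circ\zeta')|_{S'}$ is computed by lifting the transported $F$-zip along the $G_\kappa\times G_\kappa$-action on $Z_\mu$ to the pair $(g_1, g_tg_1^{(p)})$ and applying $m$. This makes essential use of the explicit elements $g_1$ and $g_t$. The paper, by contrast, never extracts these representatives: it instead pulls $\ES_0^{\#}\to G_\kappa$ and $I\to Z_\mu$ back along $m$ and $n$ to $G_\kappa\times G_\kappa$, writes out the resulting $G_\kappa\times E_{G,\mu}$-torsors $\widetilde{\ES_0}$ and $\widetilde{I}$ by hand, and exhibits a tautological equivariant isomorphism $(g_1,g_2,a,b)\mapsto(g_1,g_2,ag_1^{-1})$, where a point of $\ES_0^{\#}$ is encoded as a compatible pair $(a,b)\in I_+\times I_-$ as in the proof of Lemma~3.5 of \cite{zipaddi}. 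The verification that this map is well defined and equivariant again unpacks to the same facts about $C^\bullet$, $D_\bullet$, $\varphi_\bullet$ that you use, but the paper's packaging is more formal and avoids the two-step decomposition of the representative into $g_1$ and $g_t$. Your approach buys concreteness at the price of more bookkeeping; the paper's buys brevity at the price of some opaque notation.

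One small caveat worth making explicit in your write-up: the reduction ``it suffices to check the equality after pullback along an \'etale cover'' is not automatic for $1$-morphisms to an Artin stack, since $2$-isomorphisms form a sheaf and the issue is whether locally constructed ones glue. Your computation actually shows that the orbit representative $g_1^{-1}g_tg_1^{(p)}$ is independent of the choice of $t$ (and of the remaining freedom in $g_1$) up to exactly the $E_{G,\mu}$-ambiguity, so the $2$-isomorphism you build over $S'$ is canonical and descends; you should say this rather than invoke the local-to-global reduction as a black box. The paper sidesteps the issue entirely by working globally with the torsors themselves, which is one of the advantages of its phrasing.
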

\begin{proof}
By \cite{zipdata} 12.6, there is a cartesian diagram
$$\xymatrix{
G_\kappa\times G_\kappa\ar[r]^m\ar[d]^n&G_\kappa\ar[d]\\
Z_\mu\ar[r]&[E_{G,\mu}\backslash G_\kappa]}$$ wose vertical arrows
are $G_\kappa$-equivariant $E_{G,\mu}$-torsors and horizontal
arrows $E_{G,\mu}$-equivariant $G_\kappa$-torsors. One only needs
to check that the pull back to $G_\kappa\times G_\kappa$ of
$\ES^{\#}\rightarrow G_\kappa$ and $I\rightarrow Z_\mu$ are
$G_\kappa\times E_{G,\mu}$-equivariantly isomorphic over
$G_\kappa\times G_\kappa$.

Let $\widetilde{\ES_0}$ be the pull back
$$\xymatrix{
\widetilde{\ES_0}\ar[r]\ar[d] &\ES_0^{\#}\ar[d]\\
G_\kappa\times G_\kappa\ar[r]^m &G_\kappa. }$$ For any $T/\kappa$,
$$\widetilde{\ES_0}(T)=\{(g_1,g_2,a,b)\mid g_i\in G_\kappa(T), (a,d)\in \ES_0(T) \text{ such that } g_1^{-1}g_2=a^{-1}b\}.$$
For any $(g,p_1,p_2)\in G_\kappa\times E_{G, \mu}(T)$, the
action is given by
$$(g,p_1,p_2)\cdot(g_1,g_2,a,b)=(gg_1p_1^{-1}, gg_2p_2^{-1},ap_1^{-1},bp_2^{-1}).$$

Let $\widetilde{I}$ be the pull back
$$\xymatrix{
\widetilde{I}\ar[r]\ar[d] &I\ar[d]\\
G_\kappa\times G_\kappa\ar[r]^n &Z_\mu. }$$ For any $T/\kappa$,
\begin{align*}
\widetilde{I}(T)=\{(g_1,g_2,t)\mid g_i&\in G_\kappa(T), t\in I(T)
\text{ such that } (g_1(C^\bullet_T), g_2(D_{\bullet,T}),
g_2\phi_\bullet
g_1^{-1})\\
&=t^{-1}(\mathcal{C}^\bullet_T,\mathcal{D}_{\bullet,T},\varphi_\bullet)\},
\end{align*}
where $(\mV, \mathcal{C}^\bullet,\mathcal{D}_{\bullet},\varphi_\bullet)$ is the $F$-zip on $\ES_0$ introduced at the beginning of 2.4.

For any $(g,p_1,p_2)\in G_\kappa\times E_{G, \mu}(T)$, the action
is given by
$$(g,p_1,p_2)\cdot(g_1,g_2,t)=(gg_1p_1^{-1}, gg_2p_2^{-1},g\cdot t).$$

There is a $G_\kappa\times G_\kappa$-morphism
$\widetilde{\ES_0}\rightarrow \widetilde{I}$ mapping
$(g_1,g_2,a,b)$ to $(g_1,g_2,ag_1^{-1})$. This is clearly an
isomorphism. One also checks easily that it is $G_\kappa\times
E_{G, \mu}$-equivariant.
\end{proof}

\

\section[Ekedahl-Oort strata for $\mathrm{CSpin}$-varieties]{Ekedahl-Oort strata for $\mathrm{CSpin}$-varieties}

We apply our main results to CSpin Shimura varieties, which are
typical examples of Shimura varieties of Hodge type but not
necessarily of PEL type.

\subsection[$\mathrm{CSpin}$-Shimura varieties]{$\mathrm{CSpin}$-Shimura varieties}
We explain what are $\mathrm{CSpin}$-Shimura varieties and there
integral canonical models follow \cite{intspin}.

Let $V$ be a $n+2$-dimensional $\mathbb{Q}$-vector space with a
quadratic form $Q$ of signature $(n,2)$. We will always assume
that $n>0$. Let $p>2$ be a prime and $L\subseteq V$ be a
$\mathbb{Z}_{(p)}$-lattice such that $Q$ is non-degenerate on
$L_{\mathbb{Z}_{(p)}}$ (i.e. the bilinear form attached to $Q$
induces an isomorphism $L\rightarrow L^\vee$). Let $C(L)$ and
$C^+(L)$ be the Clifford algebra and even Clifford algebra
respectively (see \cite{intspin} 1.1). Note that there is an
embedding $L\hookrightarrow C(L)$ and an anti-involution $*$ on
$C(L)$ (see \cite{intspin}, 1.1).

Let $\mathrm{CSpin}(L)$ be the stabilizer in $C^+(L)^\times$ of
$L\hookrightarrow C(L)$ with respect to the conjugation action of
$C^+(L)^\times$ on $C(L)$. Then $\mathrm{CSpin}(L)$ is a reductive
group over $\mathbb{Z}_{(p)}$. Consider the left action of
$\mathrm{CSpin}(L)$ on $C(L)$. There is a perfect alternating
form $\psi$ on $C(L)$, such that the embedding
$\mathrm{CSpin}(L)\hookrightarrow \mathrm{GL}(C(L))$ factors
through $\mathrm{GSp}(C(L),\psi)$ which induces an embedding of
Shimura data $$(\mathrm{CSpin}(V), X)\rightarrow
(\mathrm{GSp}(C(V),\psi),X').$$ We refer to \cite{intspin} 1.8,
1.9, 3.4, 3.5 for details. Here $X$ is the space of oriented
negative 2-planes in $V_{\mathbb{R}}$, and $X'$ is the union of
Siegel half-spaces attached to $\mathrm{GSp}(C(V),\psi)$.

The above construction shows that $(\mathrm{CSpin}(V), X)$ is a
Shimura datum of Hodge type. Let
$K_p=\mathrm{CSpin}(L)(\mathbb{Z}_p)$ and $K^p\subseteq
\mathrm{CSpin}(V)(\mathbb{A}_f^p)$ be a compact open subgroup
which is small enough. Let $K=K_pK^p$, then
$$\Sh_K:=\mathrm{CSpin}(V)(\mathbb{Q})\backslash X\times
(\mathrm{CSpin}(V)(\mathbb{A}_f)/K)$$ has a canonical model over
$\mathbb{Q}$ which will again be denoted by $\Sh_K$. Moreover, Kisin's main theorem on
existence of integral canonical models implies that $\Sh_K$ has an
integral canonical model $\ES_K$ over $\mathbb{Z}_{(p)}$.

\subsection[Ekedahl-Oort strata for $\mathrm{CSpin}$-varieties]
{Ekedahl-Oort strata for $\mathrm{CSpin}$-varieties}

Let $\mathscr{S}_0$ the special fiber of $\ES_K$. The Shimura
datum determines a cocharacter
$\mu:\mathbb{G}_{m,\mathbb{Z}_p}\rightarrow
\mathrm{CSpin}(L_{\mathbb{Z}_p})$ which is unique up to
conjugation. The special fiber of
$\mu$ will still be denoted by $\mu$. The cocharacter $\mu$
determines a parabolic subgroup $P_+\subseteq
\mathrm{CSpin}(L_{\mathbb{F}_p})$, whose type will be denoted by
$J$. Let $W$ be the Weyl group of
$\mathrm{CSpin}(L_{\mathbb{F}_p})$, and ${}^JW$ be as in 3.1.3.
The set ${}^JW$ is equipped with a partial order $\preceq$ (see
3.1.3, before Theorem \ref{collectzipdata}). Then Proposition
\ref{dimandclos} implies that the structure of Ekedahl-Oort
stratification on $\ES_0$ is described by ${}^JW$ together with
the partial order $\preceq$.

All we need is a combinatorial description of $({}^JW,\preceq)$.
But everything reduces to the computations in \cite{BruhatandFzip}, after identifying the Weyl group of
$\mathrm{CSpin}(L_{\mathbb{F}_p})$ with that of
$\mathrm{SO}(L_{\mathbb{F}_p})$.

\subsubsection[]{A description of $({}^JW,\preceq)$}\label{JW for orthogonal}

Let's recall the description of $({}^JW,\preceq)$ in
\cite{BruhatandFzip}. Let $m$ be the dimension of a maximal torus
in $\mathrm{SO}(L_{\mathbb{F}_p})$.

1. If $n$ is odd, then the partial order $\preceq$ on ${}^JW$ is a
total order, and the length function induces an isomorphism of
totally ordered sets $({}^JW,\preceq)\rightarrow \{0,1,2,\cdots,
n\}$.

2. If $n$ is even, noting that in this case $n+2=2m$, then $W$ is
generated by simple refections $\{s_i\}_{i=1,\cdots, m},$ where
$$s_i=\left\{\begin{array}{cc}
(i,i+1)(n-i+2,n-i+3), \text{ for }i=1,\cdots,m-1;\\
s_m=(m-1,m+1)(m,m+2), \text{ for }i=m.
\end{array}\right.$$

Let $$w_i=\left\{\begin{array}{ccc}
s_1s_2\cdots s_i, \text{ for }i\leq m-1;\\
s_1s_2\cdots s_m, \text{ for }i=m;\\
s_1s_2\cdots s_ms_{m-2}\cdots s_{2m-i-1}, \text{ for }i\geq m+1.
\end{array}\right.$$
and $w'_{m-1}$ be $s_1s_2\cdots s_{m-2}s_m$. Then
${}^JW=\{w_i\}_{0\leq i\leq n}\cup \{w'_{m-1}\}$, and the partial
order $\preceq$ is given by \begin{equation*}
\begin{split}
w_0=\mathrm{id}\preceq w_1\preceq \cdots &\preceq w_{m-2}\\
&\preceq w_{m-1},w'_{m-1}\\
&\preceq w_m\preceq\cdots\preceq w_n.
 \end{split}
 \end{equation*}

Now we can describe structure of the Ekedahl-Oort stratification
on $\ES_0$.

\begin{corollary}\label{EO for CSpin}
Let $m$ and $n$ be as before.

1) There are at most $2m$ Ekedahl-Oort strata on $\ES_0$.

2.a) If $n$ is odd, then for any integer $0\leq i\leq n$, there is
at most one stratum $\ES^i_0$ such that $\mathrm{dim}(\ES^i_0)=i$.
These are all the Ekedahl-Oort strata on $\ES_0$. Moreover, the
Zariski closure of $\ES^i_0$ is the union of all the $\ES^{i'}_0$
such that $i'\leq i$.

2.b) If $n$ is even and positive, then for any integer $i$ such
that $0\leq i\leq n$ and $i\neq n/2$, there is at most one stratum
$\ES^i_0$ such that $\mathrm{dim}(\ES^i_0)=i$. There are at most 2
strata of dimension $n/2$. These are all the Ekedahl-Oort strata
on $\ES_0$. Moreover, the Zariski closure of the stratum $\ES^w_0$
is the union of $\ES^w_0$ with all the strata whose dimensions are
smaller than $\mathrm{dim}(\ES^w_0)$.
\end{corollary}

\begin{proof}
Apply Proposition \ref{dimandclos} together with \ref{JW for
orthogonal}.
\end{proof}

\

\end{document}